\documentclass[11pt]{article}

\usepackage{amsmath}
\usepackage{amssymb}
\usepackage{amsthm}
\usepackage{graphicx}
\usepackage{amscd}
\usepackage{epic, eepic}
\usepackage{url}
\usepackage{xcolor}
\usepackage{todonotes}
\usepackage{enumerate}
\usepackage{comment}
\usepackage{hyperref}
\usepackage{bbold}
\usepackage[utf8]{inputenc}
\usepackage{tabularray}
\newtheorem*{theorem*}{Theorem}
\newtheorem{theorem}{\bf Theorem}[section]
\newtheorem{lemma}[theorem]{\bf Lemma}
\newtheorem{corollary}[theorem]{\bf Corollary}

\newtheorem{problem}[theorem]{\bf Problem}
\newtheorem{prop}[theorem]{\bf Proposition}

\theoremstyle{definition}

\newtheorem{example}[theorem]{\bf Example}

\newcommand{\F}{\mathbb{F}}
\newcommand{\Cay}{\mathrm{Cay}}
\newcommand{\Sym}{\mathrm{Sym}}
\newcommand{\Alt}{\mathrm{Alt}}
\newcommand{\supp}{\mathrm{supp}}
\title{Tilings of symmetric and alternating groups by conjugacy classes}
\author{G\'abor Somlai \thanks{School of Mathematics and Statistics, The University of Melbourne, Parkville, VIC, 3010,  Australia, and on an unpaid leave at E\"otv\"os Lor\'and University, E-mail: \texttt{gabor.somlai@unimelb.edu.au}} \and
Binzhou Xia \thanks{School of Mathematics and Statistics, The University of Melbourne, Parkville, VIC, 3010,  Australia. E-mail: \texttt{binzhoux@unimelb.edu.au} }
\and
Sanming Zhou \thanks{School of Mathematics and Statistics, The University of Melbourne, Parkville, VIC, 3010,  Australia. E-mail: \texttt{sanming@unimelb.edu.au} }
}
\date{}

\begin{document}

\maketitle

\begin{abstract}
We study tilings of symmetric and alternating groups by conjugacy classes. For $S_n$, the case where the conjugacy class consists of transpositions was first investigated by Rothaus and Thompson in 1966 and remains open. We prove that this long-standing case is the only unresolved case for symmetric groups: every other nonidentity conjugacy class of $S_n$ fails to tile $S_n$. Our proofs combine representation-theoretic methods with combinatorial arguments concerning products of permutations. We also completely resolve the corresponding problem for alternating groups in a stronger form: for $n\geq5$, no normal subset of $A_n$ that avoids the identity can tile $A_n$.
\iffalse
We study the problem of whether a conjugacy class tiles the symmetric group or alternating group. For $S_n$, the case where the conjugacy class consists of transpositions was first investigated by Rothaus and Thompson in 1966 and remains open. We prove that this long-standing case is the only unresolved case for symmetric groups: every other nonidentity conjugacy class of $S_n$ fails to tile $S_n$. The proof combines representation-theoretic methods with combinatorial arguments concerning products of permutations. We also completely resolve the corresponding problem for alternating groups in a stronger form: for $n\geq5$, no normal subset of $A_n$ that avoids the identity can tile $A_n$.
\fi
\iffalse
    In this paper, we study when a conjugacy class or a normal subset tiles the symmetric or alternating group. The case of transpositions was first investigated by Rothaus and Thompson, and in general, it remains open. We show that every other conjugacy class of a symmetric group does not tile. We also prove that normal subsets in $A_n$ ($n\ge 5)$ not containing the identity element cannot tile.
    %Further, we prove that no nontrivial conjugacy class of a symmetric group can tile, with the possible exception of the conjugacy class of transpositions, which we investigated as our model case.

 %We initiate the study of the same question for linear groups, but we do not attempt to prove broad nonexistence results that do not follow from earlier results in representation theory.
\fi
\end{abstract}

\section{Introduction}

Let $G$ be a group. A pair $(X,Y)$ of subsets of $G$ is called a \emph{tiling} of $G$ if every element of $G$ can be written uniquely in the form $xy$, where $x\in X$ and $y\in Y$. We say that $X$ \emph{left-tiles} $G$ if $(X,Y)$ is a tiling of $G$ for some subset $Y$. We may also say that $X$ is a left tiling complement of $Y$ and $Y$ is a right tiling complement of $X$. Similarly, $Y$ \emph{right-tiles} $G$ if $(X,Y)$ is a tiling of $G$ for some subset $X$. A subset $S$ is said to \emph{tile} $G$ or $S$ is a \emph{tile} of $G$ if it either left-tiles or right-tiles $G$. If $S$ is inverse-closed or normal (that is, conjugation-closed), then $S$ left-tiles $G$ if and only if it right-tiles $G$. Moreover, if $X$ is normal, then a set $Y$ is a right tiling complement of $X$ if and only if it is a left tiling complement of $X$.

In 1966, Rothaus and Thompson \cite{rothausthompson} first investigated whether the conjugacy class of transpositions tiles $S_n$. %They also considered the closely related set obtained by adjoining the identity.
Despite subsequent work (see~\cite{transpositions}), this Rothaus--Thompson problem remains open. In fact, Fang and Xia~\cite{transpositions} conjectured that, for $n\geq4$, the conjugacy class of transpositions does not tile $S_n$.

Motivated by this question, we formulate the following problem.

\begin{problem}[Generalized Rothaus--Thompson problem]
\label{prob:generalized-RT}
For each $n$, determine the nonidentity conjugacy classes of $S_n$ that tile $S_n$.
\end{problem}

Our first main result shows that the conjugacy class of transpositions is the only unresolved case of Problem~\ref{prob:generalized-RT}.
\begin{theorem}\label{thm:symmetric}
Let $C\ne\{1\}$ be a conjugacy class of $S_n$. If $C$ tiles $S_n$, then $C=\mathcal{C}_{(2^1,1^{n-2})}$ is the set of transpositions.
\end{theorem}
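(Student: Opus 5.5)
Throughout, let $C\neq\{1\}$ be a conjugacy class of $S_n$ and suppose $(C,Y)$ is a tiling of $S_n$. Since $C$ is normal, we may use the group algebra: tiling means $\underline{C}\,\underline{Y}=\underline{S_n}$ in $\mathbb{C}[S_n]$. The plan is to apply an irreducible representation $\rho$ to this identity. Because $\underline{C}$ is central, Schur's lemma gives $\rho(\underline{C})=\omega_\chi(C)I$ with $\omega_\chi(C)=|C|\chi(C)/\chi(1)$. For $\chi\ne 1$ we have $\rho(\underline{S_n})=0$. Hence for every nontrivial $\chi$, either $\chi(C)=0$ or $\rho(\underline{Y})=0$. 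Since $|C||Y|=n!$ and $\underline{Y}\ne 0$, some irreducible $\chi\neq 1$ must have $\rho(\underline{Y})\ne0$, and that $\chi$ vanishes on $C$. So a first necessary condition is that $|C|$ divide $n!$ (automatic) and that $C$ lies in the zero set of every $\chi$ with $\rho(\underline Y)\ne 0$.

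\textbf{Parity classes.} If $C$ consists of odd permutations, the sign character $\varepsilon$ has $\varepsilon(C)=-1\neq 0$, so $\varepsilon(\underline Y)=0$. Thus $Y$ is balanced between even and odd permutations. This is consistent, so parity alone does not settle the odd case. If $C$ consists of even permutations, I would combine $\varepsilon$ with the rest of the character table. Then $\underline C\,\underline{Y\cap A_n}$ and $\underline C\,\underline{Y\setminus A_n}$ tile $A_n$ and $S_n\setminus A_n$ respectively, which reduces the problem to showing that a class of $A_n$ (or a union of two) does not tile $A_n$. This is the strengthened alternating statement promised in the abstract, and I would prove it in parallel.

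\textbf{Main tool: products of permutations.} The decisive step is combinatorial. If $C$ tiles, then $C^{-1}C=CC$ (classes of $S_n$ are inverse closed) meets $YY^{-1}$ only in $1$. In particular, for any $y\ne y'$ in $Y$, the element $y'y^{-1}\notin CC\setminus\{1\}$. The standard counting fact is $|CC|\cdot|Y|\le$ \dots; more usefully, a tiling complement $Y$ has size $n!/|C|$. If $CC$ contains a large normal set $N$ (for instance all of $A_n\setminus\{1\}$ or a large union of classes), then $Y$ is a ``code'' avoiding $N$. A code avoiding $N$ is bounded by $n!/(|N|+1)$-type clique/coclique bounds in the Cayley graph $\Cay(S_n,N)$, via the clique--coclique or Delsarte ratio bound computed from the eigenvalues $\omega_\chi(N)$. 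I would prove lemmas on $C^2$ for each cycle type $\lambda\ne(2,1^{n-2})$. For example, when $\lambda$ has support at least $3$, $C^2$ contains every $3$-cycle and many other classes (Bertram/Brenner-type covering results). Comparing the resulting upper bound on $|Y|$ with $n!/|C|$ would give a contradiction once $|C|$ is not too large relative to the avoided set. For classes with large support, where $CC$ covers nearly all of $A_n$, the code would have size at most $2$ while $n!/|C|\ge 3$ except in tiny cases. These tiny cases are handled by $|C|\mid n!$ arguments and small computations for $n\le 7$.

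\textbf{Expected obstacle.} The hard middle range consists of classes with small support other than transpositions, such as $3$-cycles, double transpositions, and $(2^2,1^{n-4})$. There $CC$ is small, so the covering bound is weak. For these I would instead use the character identity directly. I would choose explicit characters $\chi_{(n-1,1)}$, $\chi_{(n-2,2)}$, $\chi_{(n-2,1,1)}$ whose values on $C$ are polynomials in $n$ that are nonzero, forcing $\rho(\underline Y)=0$ for all low-degree representations. That forces $Y$ to be a ``$2$-design'' in $S_n$, and combining this with the avoidance of $CC$ should give a contradiction by a counting argument on how elements of $Y$ move pairs of points. This step is the one I expect to require the most care, precisely because the same approach must fail for transpositions.
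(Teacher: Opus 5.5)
\textbf{There is a genuine gap: the odd classes are not handled.} Your reduction of the even classes (restrict a tiling $(C,Y)$ to $(C,Y\cap A_n)$) is exactly the paper's. Taking Theorem~\ref{Thm:Alternating} as given, the even case is fine, so the gap lies entirely in the odd case.

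\textbf{The counting principle you propose does not work.} A set $Y$ with $YY^{-1}\cap N=\{1\}$ is a coclique of $\Cay(S_n,N)$. Clique--coclique only gives $|Y|\le n!/\omega$, where $\omega$ is the clique number, not roughly $n!/(|N|+1)$; that would require $N\cup\{1\}$ to be a subgroup. More importantly, for $N=C^2\setminus\{1\}$ the class $C$ is itself a clique, and $|C|\,|Y|=n!$, so every tiling attains the bound coming from $C$. Knowing that $C^2$ is large can therefore never give a contradiction by itself. Only the extreme case $C^2\supseteq A_n\setminus\{1\}$ does, since then $A_n$ is a clique; ``covers nearly all of $A_n$'' does not give $|Y|\le 2$. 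You also never compute a ratio bound. What is needed is a clique of size $|C|+1$, i.e.\ a single element $g\notin C\cup\{1\}$ with $Cg\subseteq C^2$. This is the paper's Lemma~\ref{lem:GSCP for symmetric} with $g=(1,2)$. The real work is then the explicit cut/glue factorizations showing $C(1,2)\subseteq C^2$, organised by Lemma~\ref{lem:cycle structure} into Cases 1a--2b plus the involutions with at least two fixed points. Your proposal contains neither this lemma nor these factorizations.

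\textbf{You have also misidentified the hard cases.} The examples you list ($3$-cycles, double transpositions) are even, so the reduction already disposes of them, and the odd small-support classes yield to the product method. The classes that genuinely resist are $\mathcal{C}_{(2^k)}$ and $\mathcal{C}_{(2^k,1)}$ with $k$ odd. They have (almost) full support, yet their squares are very sparse by Lemma~\ref{lem:ket konjugalt osztaly negyezete}. Proposition~\ref{prop:NSGSCP} shows that for $\mathcal{C}_{(2^k)}$ no $g\notin C\cup\{1\}$ satisfies $Cg\subseteq C^2$, so no clique-type argument can succeed.

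Your low-degree-character plan does not fill this gap:
\begin{enumerate}
\item For $(2^k)$, annihilating $\chi^{(n-1,1)},\chi^{(n-2,2)},\chi^{(n-2,1,1)}$ only makes $Y$ uniform on ordered pairs. That yields $2k(2k-1)\mid 2^kk!$, which holds, e.g., for $k=11$, so it gives no contradiction.
\item For $(2^k,1)$ one has $\chi^{(n-1,1)}(c)=\mathrm{fix}(c)-1=0$, so $Y$ is not even forced to be balanced on points.
\item The promised ``counting argument'' on pairs is never supplied.
\end{enumerate}

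\textbf{The paper uses two specific ideas for these classes.} For $(2^k)$, the $2$-core criterion (Lemma~\ref{lem:2-core--BST}) shows that every constituent $\chi^{(2k-i,i)}$ of $1_{S_k\times S_k}^{S_n}$ is nonzero on $C$. This forces $\binom{2k}{k}\mid 2^kk!$, which Bertrand's postulate refutes. For $(2^k,1)$, splitting each size-$4$ rim hook into two dominoes shows that $(\mathcal{C}_{(2^{k-2},4,1)},T)$ is a multiple tiling. Multiplying $\underline{C}\,\underline{T}=\underline{S_n}$ by the class sum of transpositions then makes $(A\cup B,T)$ a $k$-fold tiling, where $A=\mathcal{C}_{(2^{k-1},3)}$ and $B=\mathcal{C}_{(2^{k-1},1^3)}$. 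Since $A\cup B\subseteq C^2$, this contradicts $C^2\cap TT^{-1}=\{1\}$.
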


The proof of Theorem~\ref{thm:symmetric} combines representation-theoretic methods with combinatorial arguments concerning products of permutations. The latter also lead to the complete solution of the analogous problem for alternating groups in the following stronger form.

\begin{theorem}\label{Thm:Alternating}
Let $S$ be a normal subset of $A_n$ with $n\geq5$ such that $1\notin S$. Then $S$ does not tile $A_n$.
\end{theorem}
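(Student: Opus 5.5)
The plan is to turn tiling into a packing (clique versus independent set) problem and then use products of permutations to find a clique that is too large. Suppose $(S,Y)$ tiles $A_n$. Then $|S|\,|Y|=|A_n|$ and $S^{-1}S\cap YY^{-1}=\{1\}$. Put $D=S^{-1}S\setminus\{1\}$; then $Y$ is an independent set in $\Cay(A_n,D)$.

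Since $S$ is normal, $D$ is normal, and $SS^{-1}=S^{-1}S$, because $xy^{-1}=y^{-1}(yxy^{-1})$. So whenever $K\subseteq A_n$ satisfies $KK^{-1}\setminus\{1\}\subseteq D$, every right translate $Kg$ is a clique, and each such translate meets $Y$ in at most one point. Averaging over all $g\in A_n$ gives $|K|\,|Y|\le |A_n|$, that is, $|K|\le |S|$.

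The set $K_0=s_0^{-1}S$, for any fixed $s_0\in S$, is such a clique and has size exactly $|S|$. So the aim is to find a clique of size $|S|+1$. A contradiction then shows that $S$ cannot tile.

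I will try to enlarge $K_0$ by one element $g\notin K_0$. For this I need $g\,(s_0^{-1}s)^{-1}=g s^{-1}s_0\in S^{-1}S$ for every $s\in S$. Taking $g=s_0^{-1}h$ and conjugating by $s_0$, this becomes
\[ h\,s^{-1}\in S^{-1}S \quad\text{for all } s\in S, \qquad h\notin S. \]
The natural first choice is $h=1$, which is not in $S$ by hypothesis. The condition then reads $S^{-1}\subseteq S^{-1}S$, equivalently $S\subseteq S^{-1}S$. So the key combinatorial step is: for each nonidentity conjugacy class $C\subseteq S$ of $A_n$ with $n\ge5$, show that every $c\in C$ is a product $x^{-1}y$ with $x,y\in S$. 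It even suffices to take $x,y\in C$, i.e. to show $C\subseteq C^{-1}C$.

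I will prove $C\subseteq C^{-1}C$ (equivalently $C\subseteq C\,C^{-1}$, using inverse-closedness of $C^{-1}C$) class by class, by explicit constructions on cycle types. Write $c$ as a product of disjoint cycles. For each cycle, or pair of cycles, I will exhibit permutations $x,y$ of the same cycle type as $c$ with $x^{-1}y$ having the required cycle type. This uses identities such as $(1\,2\,3)^{-1}(1\,3\,2)^{-1}$-type relations, and the fact that an $m$-cycle can be written as a product of two $m$-cycles when $m$ is odd and $m\ge3$. The fixed points let one absorb leftover factors. Because $A_n$-classes can be half of an $S_n$-class, I also have to make sure $x$ and $y$ can be chosen in the same $A_n$-class. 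This is where $n\ge5$ enters: there is always enough room, for example a fixed point or a repeated cycle length, to adjust by an odd permutation. The split classes, whose cycle type has distinct odd lengths, need separate care. There the inverse class may differ from $C$, and I will instead conjugate $x,y$ by a suitable element of the centralizer in $S_n$.

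I expect the main obstacle to be this case analysis, especially small $n$ ($n=5,6$) and split classes such as the $5$-cycles in $A_5$. If $C\subseteq C^{-1}C$ fails for some class, I will choose a different $h$: a $3$-cycle not in $S$, if one exists, and otherwise verify directly that $S^{-1}S=A_n$. In the latter case $|Y|=1$ forces $S=A_n\ni1$, a contradiction. The remaining very small cases can be settled by checking that $|S|$ does not divide $|A_n|$, or by a short computation.
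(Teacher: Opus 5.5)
Your reduction is correct and is the paper's: the clique argument with $h=1$ re-derives Lemma~\ref{Lem:NecessaryCondition} (the paper proves it directly, without normality), and you then need $C\subseteq C^{-1}C$ for every nonidentity $A_n$-class $C$. That verification is the whole content of the theorem, and your plan for it has a genuine gap in the split case.

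If $t$ has distinct odd cycle lengths $m_1<\cdots<m_k$, then $C_{S_n}(t)=\langle t_1\rangle\times\cdots\times\langle t_k\rangle\le A_n$; this is exactly why the class splits. So conjugating a factorization $t=x^{-1}y$ by anything centralizing $t$ never moves $x$ or $y$ into another $A_n$-class, and your proposed repair does nothing. The ``room'' you invoke is also absent here: there are no repeated lengths and at most one fixed point. The cycle-by-cycle factorization does land in the wrong class:
\begin{itemize}
\item For an $m$-cycle $c$ one has $c=c^{-1}c^{2}$, but $c^2$ is $\Alt(m)$-conjugate to $c$ only when $m\equiv\pm1\pmod 8$. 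For example, for a $5$-cycle $t\in A_5$, $t^2$ lies in the other class.
\item A $3$-cycle $c$ admits only $x=c$, $y=c^{-1}$ on its support, with $y$ in the opposite $\Alt(3)$-class.
\item A fixed point allows no choice at all.
\end{itemize}

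The missing idea is to let one cycle absorb all these parity defects. You need the fact that, for odd $m\ge5$, an $m$-cycle can be written as $u^{-1}v$ with $u,v$ $m$-cycles on its support lying in \emph{any} prescribed pair of $\Alt(m)$-classes. The paper takes this from Larsen--Tiep (Theorem~\ref{Lem:LT2023}) for $m\ge7$ and from a direct check in $A_5$ for $m=5$. It applies this to the longest cycle of $t$, which has length at least $5$ because distinct odd lengths from $\{1,3\}$ sum to at most $4$. That, not fixed points, is where $n\ge5$ is used. Without this result, or explicit constructions for every odd $m\ge5$, the split case is open in your plan.

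For non-split classes you have misplaced the difficulty. There $t^{A_n}=t^{S_n}$, so any $x,y$ of the right cycle type automatically lie in $C$, and no parity adjustment is needed. What is needed are explicit factorizations for pairs of even cycles of every length combination $(2a,2b)$. An even cycle cannot be treated alone, since $x^{-1}y$ would then be even. The paper supplies these pair factorizations in Lemma~\ref{Lem:GSCP}(b)--(d); you have not.

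Your fallback does not fill either hole. The condition for a $3$-cycle $h\notin S$, namely $hS^{-1}\subseteq S^{-1}S$, is a different product problem that you have not addressed. The divisibility test is useless already in $A_5$, where every nonidentity class has size $15$, $20$ or $12$, all dividing $60$.
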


For an inverse-closed subset $S$ of a group $G$ not containing the identity element, let $\Gamma=\Cay(G,S)$ be the Cayley graph of $G$ with connection set $S$, which is defined to have vertex set $G$, in which $a$ is adjacent to $b$ if and only if $b=sa$ for some $s\in S$. A subset $T$ is called a \emph{perfect code} in $\Gamma$ if $T$, together with the neighbours of its elements, covers every vertex of $\Gamma$ exactly once. It is called a \emph{total perfect code} \cite{totalperfectcodes} if the neighbours of the elements of $T$ cover every element of $G$ exactly once. More generally, for an integer $e\geq1$, a subset $T$ is called a \emph{total perfect $e$-code} in $\Gamma$ if the sets $B_e(t)\setminus\{t\}$, where $t\in T$ and $B_e(t)$ is the set of vertices at distance at most $e$ from $t$, form a partition of the vertex set of $\Gamma$. Thus, a total perfect $1$-code is precisely a total perfect code, and $T$ is a total perfect $e$-code in $\Gamma$ if and only if $(B_e(1)\setminus\{1\},T)$ is a tiling of $G$. We say that the Cayley graph $\Cay(G,S)$ is \emph{normal} if $S$ is normal in $G$. If $\Gamma$ is normal, then $B_e(1)\setminus\{1\}$ is a normal subset of $G$. Hence, Theorem~\ref{Thm:Alternating} yields the following corollary.

\begin{corollary}
For integers $e\geq1$ and $n\geq5$, there is no total perfect $e$-code in any normal Cayley graph on $A_n$.
\end{corollary}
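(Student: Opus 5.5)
The corollary is a direct reduction to Theorem~\ref{Thm:Alternating}, via the characterization of total perfect $e$-codes as tilings recorded just before the statement. Let $\Gamma=\Cay(A_n,S)$ be a normal Cayley graph with $n\geq5$. So $S$ is a normal, inverse-closed subset of $A_n$ with $1\notin S$. Fix $e\geq1$ and suppose, for a contradiction, that $T$ is a total perfect $e$-code in $\Gamma$. Then $(B_e(1)\setminus\{1\},T)$ is a tiling of $A_n$. I will verify that $B_e(1)\setminus\{1\}$ is a normal subset of $A_n$ avoiding the identity; Theorem~\ref{Thm:Alternating} then shows it cannot tile $A_n$, which is the desired contradiction.

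\textbf{Step 1: describe the ball.} Adjacency is $b=sa$ with $s\in S$. A walk of length $k$ from $1$ therefore ends at a product $s_k\cdots s_1$ with all $s_i\in S$. Hence
\[
B_e(1)=\{1\}\cup\bigcup_{k=1}^{e}S^k .
\]

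\textbf{Step 2: normality.} Each $S^k$ is normal, since $g s_k\cdots s_1 g^{-1}=(gs_kg^{-1})\cdots(gs_1g^{-1})$ and $S$ is conjugation-closed. A union of normal sets is normal, so $B_e(1)$ is normal. Removing the single conjugacy class $\{1\}$ preserves normality, so $B_e(1)\setminus\{1\}$ is a normal subset of $A_n$. By construction it does not contain $1$.

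\textbf{Step 3: degenerate case.} If $S=\emptyset$, the ball is $\{1\}$ and $B_e(1)\setminus\{1\}=\emptyset$. The empty set tiles nothing, since $\emptyset\cdot T$ cannot cover $A_n$. So no total perfect $e$-code exists in this case either. This is also consistent with Theorem~\ref{Thm:Alternating} applied to $S=\emptyset$.

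\textbf{Step 4: conclude.} In all cases the tiling $(B_e(1)\setminus\{1\},T)$ would contradict Theorem~\ref{Thm:Alternating}, so no such $T$ exists. There is no real obstacle here: all the difficulty lies in Theorem~\ref{Thm:Alternating}, and the corollary is routine bookkeeping.
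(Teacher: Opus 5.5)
Your proof is correct and is the paper's own argument: a total perfect $e$-code $T$ gives the tiling $(B_e(1)\setminus\{1\},T)$ of $A_n$, the punctured ball is a normal subset of $A_n$ not containing $1$, and Theorem~\ref{Thm:Alternating} rules such a tiling out. Your explicit description $B_e(1)=\{1\}\cup\bigcup_{k=1}^{e}S^k$ and your separate treatment of $S=\emptyset$ only spell out details that the paper leaves implicit.
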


We next place our results in the broader literature on group tilings and perfect codes. Techniques from standard Fourier analysis are mainly used in the investigation of tiles of abelian groups, see \cite{kol}. Note that even the case of finite cyclic groups, where one of the main starting tools is Sands' theorem \cite{sands}, instead of Fourier analysis, is still far from being well understood. The best results available so far can be found in \cite{labalondner}. The term ``factorization'' is also used for the same concept. For references on (total) perfect codes in Cayley graphs, we recommend that the reader consult \cite{cameron,perfectcodes}.

Several special classes of perfect codes have been studied. One of these is the class of subgroup perfect codes, which has received considerable attention \cite{subgroupperfect} in recent years. We focus mainly on tilings in which one member of the tiling pair is a normal subset. Levy and Maróti \cite{levymaroti} proved that $G$ has such a tiling $(X,Y)$, where $X$ and $Y$ are both normal if and only if $G$ is a central product of $\langle X\rangle$ and $\langle Y\rangle$ with some extra conditions on the central subgroup $\langle X\rangle\cap\langle Y\rangle$. In particular, they showed that a simple group cannot be factorised as the product of two nontrivial normal subsets; see Corollary~1.3 in \cite{levymaroti}.

More generally, a pair $(X,Y)$ is called an \emph{$r$-fold tiling} of $G$ if every element of $G$ can be written as the product of an element of $X$ and an element of $Y$ in exactly $r$ ways. When the value of $r$ is not specified, we simply call $(X,Y)$ a \emph{multiple tiling} of $G$. Green and Liebeck \cite{GreenLiebeck} studied multiple tilings of $S_n$ by a conjugacy class $C$ and the Young subgroup $S_k\times S_{n-k}$, where $n>2k$. They constructed families of such tilings and conjectured that $(C,S_k\times S_{n-k})$ is a multiple tiling precisely when the cycle type of $C$ contains exactly one cycle of each length $2^i$ for $0\leq i\leq\lfloor\log_2 k\rfloor$, with all remaining cycles having length greater than $k$ \cite[Conjecture~2.3]{GreenLiebeck}. This conjecture was subsequently proved by Fang and Li \cite[Theorem~1.1]{FangLi}.

The rest of this paper is organized as follows. Section~\ref{sec:preliminaries} introduces the notation and the main techniques used in the paper. Section~\ref{sec:thmalternating} proves Theorem~\ref{Thm:Alternating}. Section~\ref{section:S_n} proves Theorem~\ref{thm:symmetric}, where representation theory is used for the two families of conjugacy classes not covered by the elementary method. Finally, Section~\ref{subsection: limitation of techniques} discusses the limitations of the elementary approach and possible extensions.

\section{Preliminaries}\label{sec:preliminaries}
In this section, we summarize the basic notation used throughout the paper and introduce some of the fundamental techniques.

All permutations act on the right, and products of permutations are evaluated from left to right.  As usual $1$-cycles are omitted unless in some cases when they are relevant. Thus, in a product $\sigma\tau$, the permutation $\sigma$ is applied first and $\tau$ second, so
$
i^{\sigma\tau}=(i^\sigma)^\tau $. We denote by $[n]$ the set of integers from 1 to $n$.
 If $\sigma$ is  a permutation in the symmetric group $S_n$, then its support is
$$
\supp(\sigma)=\{i\in[n]\mid i^\sigma\ne i\}.
$$
A nonempty subset $A$ of $[n]$ is $g$-\emph{invariant} for $g\in S_n$ if $a^g\in A$ for every $a \in A$.
Let $g$ be a permutation in  $S_n$ and let $A \subset [n]$ be a $g$-invariant set. Then the restriction of $g$ to $A$ is denoted by $g\vert_A$.

For $x,g\in G$, we write $x^g=g^{-1}xg$, and $
x^G=\{x^g\mid g\in G\}$
denotes the conjugacy class of $x$ in $G$. For subsets $X,Y\subseteq G$, we write $XY=\{xy\mid x\in X,y\in Y\}$.

The method of the paper is largely elementary, in the last two subsections we use representation theory, which is the standard technique when studying tilings with conjugacy classes.
Hence we now collect some standard notation from representation theory and facts used below. A \emph{partition} of $n$ is a weakly decreasing sequence
$\lambda=(\lambda_1,\lambda_2,\ldots,\lambda_\ell)$
of positive integers with $\lambda_1+\cdots+\lambda_\ell=n$. In this case we write $\lambda\vdash n$.
Instead of writing repeated parts explicitly, a partition  $\lambda = (\lambda_1, \lambda_2, \dots, \lambda_k)$ is condensed by collecting  terms using exponents to represent counts (multiplicities) so we may write $\lambda$ as follows:
$$\lambda = (a_1^{m_1}, a_2^{m_2}, a_3^{m_3}, \dots ,a_k^{m_k}).$$
In principle $a_i$ is a strictly decreasing sequence but for some of the cases we ignore this condition on these numbers to be decreasing but we keep the condition that they are pairwise different.

The \emph{Young diagram} of $\lambda \vdash n$ is
\[
D(\lambda)=\{(i,j)\mid 1\le i\le\ell,1\le j\le\lambda_i\}.
\]
The \emph{transpose} of a Young diagram is obtained by reflecting it across the main diagonal, thereby interchanging rows and columns;
we denote the image of a cell $b$ by $b^{\mathsf t}$.

For a pair of partitions $\lambda,\mu\vdash n$, we say that $\mu$ \emph{dominates} $\lambda$, written $\lambda\trianglelefteq\mu$, if
\begin{equation}\label{eq:partitiondominance}
    \lambda_1+\cdots+\lambda_r\le \mu_1+\cdots+\mu_r
\end{equation}
for every $r\ge1$, where missing summands in equation \eqref{eq:partitiondominance} are taken to be zero.

For every finite group the numbers of conjugacy classes and irreducible complex representations are the same, and in the case of $S_n$, the conjugacy classes  are indexed by partitions of $n$. For $\lambda\vdash n$, let $\mathcal{C}_\lambda$ denote the conjugacy class consisting of all permutations of cycle type $\lambda$,  let $\rho^\lambda$ denote the corresponding irreducible representation of $S_n$, and let $\chi^\lambda$ denote its character.

A \emph{rim hook}, also called a \emph{border strip}, of the partition $\lambda \vdash n$ is a connected sequence of boxes in a Young diagram that lies entirely on the rim (the southeast boundary) of $\lambda$.
 Its \emph{size} is its number of boxes, and its \emph{height} is one less than the number of rows it intersects. For a pair of partitions $\lambda,\mu\vdash n$, let $BST(\lambda,\mu)$ denote the set of \emph{border-strip tableaux} of shape $\lambda$ and type $\mu$. What this means is that its elements are chains
\[
\lambda=\lambda^{(0)}\supset\lambda^{(1)}\supset\cdots\supset\lambda^{(r)}=\varnothing
\]
such that $\lambda^{(i-1)}\setminus\lambda^{(i)}$ is a rim hook of size $\mu_i$ for every $i$. For $T\in BST(\lambda,\mu)$ the height $\operatorname{ht}(T)$ of such a tableau is the sum of the heights of these rim hooks.

The Murnaghan--Nakayama rule states that if $g\in S_n$ has cycle type $\mu$, then
\[
\chi^\lambda(g)
 =\sum_{T\in BST(\lambda,\mu)}(-1)^{\operatorname{ht}(T)}.
\]
A rim hook of size $2$ is a horizontal or vertical domino whose removal leaves a Young diagram. The \emph{$2$-core} of $\lambda$ is the unique partition obtained by repeatedly removing rim hooks of size $2$ until none remains. In particular, if $n=2k$ and $g$ has cycle type $(2^k)$, then $BST(\lambda,(2^k))$ is nonempty if and only if $\lambda$ has empty $2$-core. Moreover, a simple parity argument (Lemma \ref{lem:2-core--BST}) shows that all terms in the Murnaghan--Nakayama sum have the same sign, and hence
$\chi^\lambda(g)\ne0$ if and only if
$\lambda$ has empty  $2$-core or equivalently $BST(\lambda,(2^k))$ is nonempty.
We refer to \cite{James,reprtheoryofsymmetric} for these facts.

\begin{example}
For $\lambda=(5,3)$, we may successively remove rim dominoes as follows:
\[
(5,3)\supset(3,3)\supset(3,1)\supset(1,1)\supset\varnothing.
\]
The first three removed dominoes are horizontal, and the last one is vertical. Thus the $2$-core of $(5,3)$ is empty. By combining this argument with the fact that in this case the parity of $h(T)$ is the same for every $T\in BST((5,3),(2^4))$, the Murnaghan--Nakayama rule gives $ \chi^{(5,3)}(c) \ne 0$ for every $c\in\mathcal{C}_{(2^4)}$.
\end{example}
Let $H$ be a subgroup of a finite group $G$ and $\delta$ a representation of $H$ and $\chi$ the corresponding character.  The induced  representation and the character is denoted by $\delta^G$ and $\chi^G$, respectively. The trivial character of the group $K$ is denoted by $1_K$.
In particular, if $\alpha=(\alpha_1,\ldots,\alpha_r)\vdash n$ and
$H$ is a Young subgroup of the form $ H_\alpha=S_{\alpha_1}\times\cdots\times S_{\alpha_r}\le S_n,
$
then Young's rule gives
\[
1_{H_\alpha}^{S_n}
 =\sum_{\lambda\vdash n}K_{\lambda,\alpha}\chi^\lambda,
\]
where $K_{\lambda,\alpha}$ is a Kostka number. We have $K_{\lambda,\alpha}\ne0$ if and only if $\lambda$ dominates $\alpha$.

We use the notation $\mathbb{C}[G]$ for group algebra of the group $G$ and for a finite nonempty subset $A$ of $G$, the sum of elements of $A$ will be denoted by $\underline{A}$.
We will also use the following standard group-algebra consequence of Schur's lemma. If $(C,T)$ is a tiling of a finite group $G$, then
$\underline{C}\cdot \underline{T}=\underline{G}$.
%\[
%\left(\sum_{c\in C}c\right)\left(\sum_{t\in T}t\right)=\sum_{g\in G}g.
%\]
If $C=x^G$ is a conjugacy class and $\rho$ is an irreducible representation of $G$ with character $\chi$, then
\[
\rho\left(\underline{C} \right) =\frac{|C|\chi(x)}{\chi(1)}I,
\]
where $I$ denotes the identity matrix.
Now $\underline{G}$ vanishes on every nontrivial irreducible representation. Therefore, whenever $\rho$ is a nontrivial representation of $G$ and $\chi(x)\ne0$, we have
\[
\sum_{t\in T}\rho(t)=0,
\]
which of course coincides with the Fourier techniques for Abelian groups.

We now introduce the elementary observation that will be our basic method.
\begin{lemma}\label{Lem:NecessaryCondition}
Let $G$ be a group, and let $S$ be a subset of $G$ such that $1\not\in S$ and $S\subseteq S^{-1}S$. Then $S$ does not left-tile $G$.
\end{lemma}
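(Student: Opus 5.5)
Suppose, for a contradiction, that $(S,T)$ is a tiling of $G$ for some $T\subseteq G$. Then every element of $G$ is uniquely a product $st$ with $s\in S$, $t\in T$. The idea is to use the hypothesis $S\subseteq S^{-1}S$ to produce a single element of $G$ that has two different factorizations, or to show that some element is not covered at all.

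First fix any $t\in T$. Since $1\notin S$, the element $t$ itself cannot be written as $s t$ with $s\in S$. Since $(S,T)$ is a tiling, there are unique $s_0\in S$ and $t_0\in T$ with $t=s_0t_0$, and $t_0\ne t$. By hypothesis $s_0\in S^{-1}S$, so $s_0=s_1^{-1}s_2$ for some $s_1,s_2\in S$. Then
\[
s_1 t = s_1 s_0 t_0 = s_2 t_0 .
\]
This displays the element $g=s_1t$ in two ways as a product from $S\cdot T$: as $(s_1,t)$ and as $(s_2,t_0)$. These two representations differ because $t\ne t_0$, which contradicts the uniqueness in the definition of a tiling. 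Hence no right tiling complement $T$ exists, so $S$ does not left-tile $G$.

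The only step needing care is checking that the two factorizations are genuinely distinct. Here the assumption $1\notin S$ is essential: it forces $t_0\ne t$, since $t_0=t$ would give $s_0=1\in S$. Nothing else is delicate, and finiteness of $G$ is not used.
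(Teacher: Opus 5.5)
Your proof is correct and is essentially the paper's argument. The paper normalizes to $1\in Y$, derives $(S^{-1}S)\cap(YY^{-1})=\{1\}$ from uniqueness, and observes that some $s\in S\subseteq S^{-1}S$ lies in $YY^{-1}$. Your element $s_0=tt_0^{-1}\in TT^{-1}$ plays exactly that role: fixing an arbitrary $t\in T$ replaces the translation step, and the collision $s_1t=s_2t_0$ is that intersection criterion unpacked directly.
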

\begin{proof}
Suppose, for a contradiction, that $(S,Y)$ is a tiling of $G$. Since $(S,Yg)$ is also a tiling for every $g\in G$, we may assume without loss of generality that $1\in Y$. There exist $s\in S$ and $y\in Y$ such that $1=sy$, and hence $s^{-1}=y\in Y$. Therefore $s=1(s^{-1})^{-1}\in YY^{-1}$. Since $1\notin S$, $s$ is a nonidentity element of $S$.

If $s_1y_1=s_2y_2$ for $s_1,s_2\in S$ and $y_1,y_2\in Y$, uniqueness of the tiling implies $s_1=s_2$ and $y_1=y_2$. Consequently,
\[
(S^{-1}S)\cap(YY^{-1})=\{1\}.
\]
However, the nonidentity element $s$ belongs to both sets because $S\subseteq S^{-1}S$, a contradiction.
\end{proof}
For certain conjugacy classes $C$ of $A_n$,
it is known that $A_n\setminus \{ 1\} \subseteq C^2$,  see
\cite{LT2023} and the references therein. In this case  $C\subseteq C^2$, which guarantees that $C$ is not a tile if $C=C^{-1}$.
These results are special cases of Thompson's conjecture, which asserts that every finite
non-abelian simple group $G$ contains a conjugacy class $C$ such that
$C^2=G$. The conjecture first appeared as Problem~9.24 in the ninth
edition of the \emph{Kourovka Notebook} in 1984, see
\cite[Problem~9.24]{Kourovka}. A recent preprint by Liao, Wang and Zhang \cite{LWZ2026} claims to
settle the remaining cases, thereby completing the proof of the
conjecture.
Larsen and Tiep have established large families of cases of
Thompson's conjecture using character ratio estimates for finite
classical groups \cite[Theorem~7.7]{LTuniform}.
Their subsequent work proves the conjecture for all sufficiently
large finite non-abelian simple groups
\cite[Theorem~D]{LTasymptotic}.
For our purposes, we only need the following result of Larsen and
Tiep \cite[Theorem~1.1]{LT2023}.

\begin{theorem}[Larsen--Tiep]\label{Lem:LT2023}
Let $n\geq7$ be an odd integer, and let $C$ and $D$ be (possibly equal) conjugacy classes of $A_n$, each consisting of $n$-cycles. Then $CD\supseteq A_n\setminus\{1\}$.
\end{theorem}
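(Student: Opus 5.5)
The natural route is the Frobenius character formula. For $g\in A_n\setminus\{1\}$, $x\in C$ and $y\in D$, the number of pairs $(c,d)\in C\times D$ with $cd=g$ equals
\[
N(g)=\frac{|C||D|}{|A_n|}\sum_{\psi\in\mathrm{Irr}(A_n)}\frac{\psi(x)\psi(y)\overline{\psi(g)}}{\psi(1)},
\]
and I would show $N(g)>0$ by proving that the nontrivial terms sum in absolute value to less than $1$.

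\textbf{Step 1: which characters contribute.} By the Murnaghan--Nakayama rule, an $n$-cycle has a single border strip of size $n$. Hence $\chi^\lambda(x)\neq 0$ only for hooks $\lambda=(n-k,1^k)$, where $\chi^\lambda(x)=(-1)^k$. Restricting to $A_n$ with $n$ odd:
\begin{itemize}
\item for $k<(n-1)/2$, the hooks $(n-k,1^k)$ and their conjugates restrict to the same irreducible character $\psi_k$ of $A_n$, with $|\psi_k(x)|=1$;
\item the self-conjugate hook $((n+1)/2,1^{(n-1)/2})$ splits into two characters of degree $\tfrac12\binom{n-1}{(n-1)/2}$;
\item on the $n$-cycle classes, which split in $A_n$ exactly in this situation, these two characters take the values $\bigl(\epsilon\pm\sqrt{\epsilon n}\bigr)/2$, where $\epsilon=(-1)^{(n-1)/2}$;
\item on all other classes they take the value $\tfrac12\chi^\lambda$.
\end{itemize}
Every other irreducible character vanishes on $x$ (or on $y$), so the sum reduces to the trivial term, the terms $\psi_k(g)/\psi_k(1)$ up to sign for $1\le k<(n-1)/2$, and the two split terms.

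\textbf{Step 2: bound the hook values at $g$.} The hook characters are exterior powers of the standard representation. This gives
\[
\sum_k \chi^{(n-k,1^k)}(g)\,t^k=\prod_i\frac{1-(-t)^{\ell_i}}{1+t},
\]
where the $\ell_i$ are the cycle lengths of $g$. From this, $|\chi^{(n-k,1^k)}(g)|$ is at most the $k$-th coefficient of $\prod_i(1+t^{\ell_i})/(1+t)$. For $g$ with $f$ fixed points, this is roughly $\binom{f+(n-f)/2}{k}$-sized, much smaller than $\binom{n-1}{k}$.

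The middle terms are the delicate ones. Their numerator is of size about $n\,|\chi^\lambda(g)|$ and their denominator about $\binom{n-1}{(n-1)/2}$, and exponential smallness of the latter handles them. Summing the ratios over $k$ should then give total $<1$ once the support of $g$ is reasonably large.

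\textbf{Step 3 (main obstacle): $g$ with many fixed points.} For a $3$-cycle or a double transposition, the $k=1$ ratio $(f-1)/(n-1)$ is close to $1$, so crude bounds fail. There I would either exploit the alternating signs $(-1)^k$, computing $\sum_k(-1)^k\chi_k(g)/\binom{n-1}{k}$ exactly via the generating function, or give a direct combinatorial construction. For the latter, write $g$ with small support as $c\cdot d$ with $c,d$ explicit $n$-cycles, for instance $(1\,2\,3)=(1\,2\,\dots,n)\cdot(\text{suitable } n\text{-cycle})$. I would then use conjugation by an odd permutation fixing $g$ to move $c$ and $d$ into the prescribed $A_n$-classes $C$ and $D$. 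Such a permutation exists whenever $g$ has at least two fixed points or an even-length cycle; the remaining elements, where it may not exist, have large support and are covered by Step 2.

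The small cases $n=7,9$ would be checked directly from the character tables. I expect the uniform treatment of intermediate-support elements, where neither the asymptotic bound nor the explicit construction is immediate, to be the hardest part.
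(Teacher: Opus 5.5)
The paper gives no proof of this statement; it quotes it from Larsen--Tiep \cite{LT2023}, so your argument has to stand on its own. Steps~1 and~2 are a sound setup. Only hooks survive on $n$-cycles, conjugate hooks fuse in $A_n$, the self-conjugate hook $\lambda=(\frac{n+1}{2},1^{\frac{n-1}{2}})$ splits with the values you state, and the exterior-power generating function is correct. There is one slip: when $g$ is fixed-point-free, $\prod_i(1+t^{\ell_i})/(1+t)$ has negative coefficients, so it is not a majorant; use $\prod_{i\ne i_0}(1+t^{\ell_i})\cdot(1+t+\cdots+t^{\ell_{i_0}-1})$ instead. Apart from the intermediate range, which you explicitly leave open, Step~3 contains a genuine error.

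Let $C_1,C_2$ be the two $A_n$-classes of $n$-cycles, and let $N_{C_iC_j}(g)$ count pairs $(c,d)\in C_i\times C_j$ with $cd=g$. Conjugating a factorization $g=cd$ by an odd $h\in C_{S_n}(g)$ moves \emph{both} $c$ and $d$ into the other class. So whether $c,d$ lie in the same class or in different classes is invariant under this operation. One explicit factorization therefore gives $g\in C_1C_1\cap C_2C_2$ or $g\in C_1C_2\cap C_2C_1$, never all four products. Obtaining the other relative type is exactly what the theorem adds to Gleason's lemma. It is governed by the two split characters, whose contribution is not small for small-support $g$: for a $3$-cycle, $|\chi^\lambda(g)|/\chi^\lambda(1)\to1/4$, so that term has size of order $n$.

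Precisely, take $g\ne1$ not an $n$-cycle, put $\chi_k=\chi^{(n-k,1^k)}$, and let $\epsilon$ be as you define it. The Frobenius formula gives $N_{C_1C_1}(g)=N_{C_2C_2}(g)$ and $N_{C_1C_2}(g)=N_{C_2C_1}(g)$. Their difference is $\frac{|C_1|^2}{|A_n|}\,\epsilon n\,\chi^\lambda(g)/\chi^\lambda(1)$, and their sum is $\frac{|C_1|^2}{|A_n|}\sum_{k=0}^{n-1}\chi_k(g)/\binom{n-1}{k}$. Hence all four products contain $g$ if and only if
\[
\sum_{k=0}^{n-1}\frac{\chi_k(g)}{\binom{n-1}{k}}>n\,\frac{|\chi^\lambda(g)|}{\chi^\lambda(1)}.
\]
For a double transposition the two sides are $n/3$ and $n/(n-2)$, which coincide at $n=5$. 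Indeed, in $A_5$ the square of either class of $5$-cycles contains no involution. Yet $(1,2)(3,4)$ is a product of two $5$-cycles and is centralized by the odd permutation $(1,2)$. Your Step~3 never uses $n\ge7$, so it would ``prove'' this false statement.

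To repair the proposal you must establish the displayed inequality for every such $g$. The Beta integral turns its left side into $n\int_0^1\prod_i\bigl((1-t)^{\ell_i}-(-t)^{\ell_i}\bigr)\,dt$, with $\ell_i$ the cycle lengths of $g$. You must also treat $g$ an $n$-cycle separately; there an absolute bound as in your Step~2 does suffice.
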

We would like to have a complete picture so we turn to more elementary but more effective tools.
\section{Proof of Theorem \ref{Thm:Alternating}}\label{sec:thmalternating}
We say that a subset $S$ of a permutation group has the \emph{generalized square container property (GSCP)} if, for each $t\in S$, there exists $s\in S$ with $\mathrm{supp}(s)=\mathrm{supp}(t)$ such that $st\in S$.
\begin{lemma}\label{Lem:GSCP}
Let $g\in S_n$ with one of the following cycle types:
\begin{enumerate}[\rm(a)]
\item $((2k-1)^1,1^{n-2k+1})$, where $k\geq1$ is an integer,
\item $((2k)^1,2^1,1^{n-2k-2})$, where $k\geq1$ is an integer,
\item $((2k)^1,4^1,1^{n-2k-4})$, where $k\geq2$ is an integer,
\item $((2k)^1,(2\ell)^1,1^{n-2k-2\ell})$, where $k\geq3$ and $\ell\geq3$ are integers.
\end{enumerate}
Then $g^{S_n}$ has GSCP.
\end{lemma}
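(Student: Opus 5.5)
The first reduction is that GSCP needs to be checked on a single representative of $g^{S_n}$. The class is closed under conjugation, and conjugation by $h$ sends $\supp(x)$ to $\supp(x)^h$ and products to products. So if $t_0$ is a representative and $s_0\in g^{S_n}$ satisfies $\supp(s_0)=\supp(t_0)$ and $s_0t_0\in g^{S_n}$, then for $t=t_0^h$ the element $s=s_0^h$ has the same properties. Points outside $\supp(t_0)$ play no role. In each case I would therefore fix a convenient representative $t$ on the support $\{1,\dots,m\}$, with $m=2k-1$, $2k+2$, $2k+4$ or $2k+2\ell$, and exhibit $s$ explicitly.

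\textbf{Case (a).} If $k=1$ the class is $\{1\}$; take $s=t=1$, so $st=1$ lies in the class and the supports are equal (both empty). If $k\ge2$, let $t$ be an $m$-cycle with $m=2k-1$ odd and take $s=t^{-2}$.
\begin{itemize}
\item Since $\gcd(2,m)=1$, the permutation $t^{-2}$ is again an $m$-cycle on the same support.
\item Then $st=t^{-1}$ is also an $m$-cycle.
\end{itemize}
This case is immediate.

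\textbf{Cases (b)--(d).} The cycle type is now a pair of even cycles. Every such $t$ is an even permutation, and powers of $t$ never have the right type. Moreover $s$ cannot preserve the two cycle supports $A$ and $B$ separately: on $A$ it would act as a $2k$-cycle, and the product of two odd permutations of $A$ is never a $2k$-cycle. So $s$ has to mix the two cycles. I would write $t=(a_1\,a_2\cdots a_{2k})(b_1\cdots b_{2j})$, with $2j=2,4$ or $2\ell$ respectively, and look for $s$ of the form
\[
s=(a_1\,b_1\,\dots)(\dots),
\]
built by interleaving the letters of the two cycles in a fixed pattern. I would first do the smallest instances by hand:
\begin{itemize}
\item $k=1$ in (b): $t=(12)(34)$, $s=(13)(24)$, $st=(14)(23)$.
\item $k=2$ in (c), and $k=\ell=3$ in (d): small searches of the same kind.
\end{itemize}
From these I would extract a pattern and then extend it uniformly in $k$ (and $\ell$). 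The idea is to insert the extra letters $a_{2i+1},a_{2i+2}$ in pairs into a fixed "gadget" in such a way that both the cycle structure of $s$ and that of $st$ grow by exactly two in the long cycle. The verification that $s$ and $st$ have the required cycle type is then a direct computation of $i^{st}=(i^s)^t$ along the two cycles. That computation is routine once the pattern is right, and I would organize it by tracking where the gadget letters go and noting that the remaining letters are shifted uniformly.

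\textbf{Main obstacle.} The hard step is finding these uniform patterns in (b)--(d), in particular in (d). There both cycles have arbitrary even lengths $2k,2\ell\ge6$, and the construction must work simultaneously in $k$ and $\ell$. The restrictions in the statement ($k\ge2$ in (c), $k,\ell\ge3$ in (d)) suggest that the small cases such as $(2k,4)$ with $k=1$ and $(2k,2\ell)$ with $\ell\le2$ genuinely fail, so the gadget must use at least three letters from each cycle. My first attempt would be to choose $s$ so that $st$ equals a conjugate of $t$ obtained by a "cross-over" that swaps a short segment of the $a$-cycle with a short segment of the $b$-cycle. I would then check by an inductive insertion argument that the lengths are preserved.
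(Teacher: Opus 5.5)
\textbf{Verdict: there is a genuine gap.} Your reduction to a single representative and your case (a) are fine and agree with the paper. The paper takes $t=s^{-2}$ instead of $s=t^{-2}$, which amounts to the same thing.

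The gap is that cases (b), (c) and (d) are not proved. The only concrete pair you give is $t=(1,2)(3,4)$, $s=(1,3)(2,4)$. Beyond that you describe a search strategy (``extract a pattern'', a ``gadget'', an ``inductive insertion argument'') but never write down $s$ and $t$ for general $k$ and $\ell$, and never check that $s$, $t$ and $st$ have the required cycle type. For this lemma the explicit pairs are the whole content, so a plan for finding them is not a proof.

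What is needed is a uniform family in each case, checked by tracing $i^{st}=(i^s)^t$. In (b), for instance, the paper takes
\[
s=(1,2)(3,4,\ldots,2k+2),\qquad t=(1,4)(2,3,5,6,\ldots,2k+2),
\]
and obtains
\[
st=(1,3)(2,4,6,\ldots,2k+2,5,7,\ldots,2k+1).
\]
In (c) and (d) it gives analogous explicit formulas: a fixed block of $8$ or $12$ letters, respectively, plus tails of extra letters. These families do have the grow-by-two shape you anticipate: increasing $k$ or $\ell$ by one adds two letters to one cycle of each of $s$, $t$ and $st$. Until such formulas are written down and verified, the argument is incomplete.

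Your reading of the hypotheses is also off, and it would mislead the search. The restrictions $k\ge2$ in (c) and $k,\ell\ge3$ in (d) do not mean that small cases fail. They simply sort all pairs of even lengths by the smaller one:
\begin{itemize}
\item $(4,2)$ is (b) with $k=2$;
\item $(4,4)$ is (c) with $k=2$;
\item $(6,6)$ is (d) with $k=\ell=3$.
\end{itemize}
So every product of two even cycles is covered. In (b) and (c) the short cycle has only $2$ or $4$ letters, so there is no reason to require the gadget to use at least three letters from each cycle.

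A smaller point: ``powers of $t$ never have the right type'' is false as stated, since $t^{-1}$ has the right type. What is true is that no power $s=t^j$ makes both $s$ and $st$ of the right type. Such an $s$ forces $j$ odd, so $st=t^{j+1}$ is an even power and splits each even cycle.
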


\begin{proof}
Since $g^{S_n}$ is closed under conjugation, it is enough to find one pair $s,t\in g^{S_n}$ such that $st\in g^{S_n}$. Indeed, conjugating this pair by an element that sends $t$ to an arbitrary element of $g^{S_n}$ produces the required pair for that element. We now give such pairs in cases~(a)--(d), writing permutations in cycle notation and omitting $1$-cycles.

(a). Let $s$ be a $(2k-1)$-cycle, and let $t=s^{-2}$. Then $s$ and $t$ have the required properties, since both $t$ and $st=s^{-1}$ are $(2k-1)$-cycles.

(b). Let $s=(1,2)(3,4,\ldots,2k+2)$ and $t=(1,4)(2,3,5,6,\ldots,2k+2)$. Then
\[
st=(1,3)(2,4,6,\ldots,2k,2k+2,5,7,9,\ldots,2k+1)
\]
also has the cycle type $((2k)^1,2^1,1^{n-2k-2})$.

(c). Let $s=(1,2,3,4)(5,6,\ldots,2k+4)$ and $t=(1,3,5,7)(2,4,6,8,9,\ldots,2k+4)$. Then
\[
st=(1,4,3,6)(2,5,8,10,12,\ldots,2k+4,7,9,11,\ldots,2k+3),
\]
as desired.

(d). For convenience, label the points in the support outside $\{1,\ldots,12\}$ as $$a_1,\ldots,a_{k-3},b_1,\ldots,b_{k-3},~~ c_1,\ldots,c_{\ell-3},d_1,\ldots,d_{\ell-3}.$$
Let
\begin{align*}
s&=(1,2,3,4,5,6,a_1,b_1,\ldots,a_{k-3},b_{k-3})(7,8,9,10,11,12,c_1,d_1,\ldots,c_{\ell-3},d_{\ell-3}),\\
t&=(1,a_{k-3},\ldots,a_1,12,9,11,8,4,b_{k-3},\ldots,b_1)(10,d_{\ell-3},\ldots,d_1,6,3,5,2,7,c_{\ell-3},\ldots,c_1).
\end{align*}
Empty indexed lists are omitted, thus the same formulas also cover the  cases $k=3$ or $\ell=3$. A direct calculation gives
\[
st=(1,7,4,2,5,3,b_{k-3},a_{k-3},\ldots,b_1,a_1)(6,12,10,8,11,9,d_{\ell-3},c_{\ell-3},\ldots,d_1,c_1),
\]
which completes the proof.
\end{proof}

Let us recall a well-known fact on the conjugacy classes of $A_n$ and $S_n$. For $t\in S_n$, the conjugacy class of $t$ in $S_n$ splits into two conjugacy classes in $A_n$ if and only if in the cycle structure of $t$, all cycles (including  $1$-cycles) have distinct odd lengths.

\begin{proof}[\bf Proof of Theorem~\ref{Thm:Alternating}]
Let $S$ be a normal subset of $A_n$ not containing $1$, and let $t$ be an arbitrary element of $S$. By Lemma~\ref{Lem:NecessaryCondition}, we only need to prove that there exists $s\in S$ with $st\in S$ since this shows $t\in S^{-1}S$. Let $t=t_1\cdots t_k$ be the cycle decomposition of $t$, where $t_1,\ldots,t_k$ are disjoint cycles (including the $1$-cycles) such that $|t_1|\leq\cdots\leq|t_{k-1}|\leq|t_k|$.

First assume that $t^{S_n}$ splits into two conjugacy classes in $A_n$. Then $t_1,\ldots,t_{k-1},t_k$ are odd cycles of pairwise distinct lengths. Consequently, each $t_i$ is conjugate to $t_i^{-2}$ in $\operatorname{Sym}(\operatorname{supp}(t_i))$, and so
$t_1^{-2}\cdots t_{k-1}^{-2}=(t_1\cdots t_{k-1})^g$
for some permutation $g$ with support contained in
$\operatorname{supp}(t_1)\cup\cdots\cup\operatorname{supp}(t_{k-1})$.
Moreover, $t_1^{-1}\cdots t_{k-1}^{-1}=(t_1\cdots t_{k-1})^h$
for some permutation $h$ with
$\supp(h)\subseteq \supp(t_1)\cup\cdots\cup\supp(t_{k-1})$.
Let $\Delta=\supp(t_k)$. If $|\Delta|\geq7$, then Theorem~\ref{Lem:LT2023} implies
\begin{equation}\label{Eqn:LT2023}
t_k\in((t_k^x)^{-1})^{\Alt(\Delta)}(t_k^y)^{\Alt(\Delta)}
\end{equation}
for all $x,y\in\Sym(\Delta)$.

If $|\Delta|=5$, then~\eqref{Eqn:LT2023} follows by multiplying representatives of the two $A_5$-classes of $5$-cycles. Each of the four possible products of these classes contains a $5$-cycle. Moreover, if the longest nontrivial cycle has length at most $3$, then the distinct odd cycle lengths are contained in ${1,3}$, forcing $n\leq4$. Hence~\eqref{Eqn:LT2023} always holds.

Take an $x\in\Sym(\Delta)$ of the same parity as $g$, and a $y\in\Sym(\Delta)$ of the same parity as $h$. By~\eqref{Eqn:LT2023}, there exist $u\in(t_k^x)^{\Alt(\Delta)}$ and $v\in(t_k^y)^{\Alt(\Delta)}$ such that $t_k=u^{-1}v$. In particular,
$\supp(u)=\supp(v)=\Delta=\supp(t_k)$.

Now set
$$
s=t_1^{-2}\cdots t_{k-1}^{-2}u
=(t_1\cdots t_{k-1})^gu
\in(t_1\cdots t_{k-1})^gt_k^{x\Alt(\Delta)}.
$$
Since $gx$ is even, $s\in(t_1\cdots t_{k-1}t_k)^{A_n}=t^{A_n}\subseteq S$. Further, since $hy$ is even, we obtain
$$
st
=t_1^{-2}\cdots t_{k-1}^{-2}u(t_1\cdots t_{k-1}t_k)
=t_1^{-1}\cdots t_{k-1}^{-1}(ut_k)
=(t_1\cdots t_{k-1})^hv
\in t^{A_n}\subseteq S.
$$
Next assume that $t^{S_n}=t^{A_n}$. In this case, $t^{A_n}$ is inverse closed. Let $t_{i_1},\ldots,t_{i_\ell}$ be the even cycles among $t_1,\ldots,t_k$. Then $\ell$ is even since $t\in A_n$. By Lemma~\ref{Lem:GSCP}, $t_i^{S_n}$ and $(t_{i_{2j-1}}t_{i_{2j}})^{S_n}$ have GSCP for each
$i\in\{1,\ldots,k\}\setminus\{i_1,\ldots,i_\ell\}$
and $j\in{1,\ldots,\ell/2}$.

Since $t^{S_n}$ does not split in $A_n$, either $t$ contains an even cycle or two of its odd cycles have the same length. In the former case, the number of even cycles is even, so there are at least two of them. Thus, after relabelling the cycles if necessary, we may assume that
$\bar t:=t_{k-1}t_k$
is either the product of two even cycles or two odd cycles of the same length. In either case,
$
\bar t^{\operatorname{Sym}(\operatorname{supp}(\bar t))}
=\bar t^{\operatorname{Alt}(\operatorname{supp}(\bar t))}$.

Now we choose the corresponding GSCP witnesses independently on the disjoint supports. For every odd cycle $t_i$ among $t_1,\ldots,t_{k-2}$, we choose $x_i\in\operatorname{Sym}(\operatorname{supp}(t_i))$ such that, for $s_i:=t_i^{x_i}$, the product $s_it_i$ is a cycle whose length is $|\operatorname{supp}(t_i)|$. We pair the even cycles among $t_1,\ldots,t_{k-2}$, and for every such pair $t_i,t_j$, we choose $x_{ij}\in\operatorname{Sym}(\operatorname{supp}(t_i)\cup\operatorname{supp}(t_j))$ such that, for $s_{ij}:=(t_it_j)^{x_{ij}}$, the product $s_{ij}t_it_j$ has the same cycle structure as $t_it_j$. Depending on whether the product of these permutations is in $A_n$ or $S_n\setminus A_n$, we may choose $y$ of the appropriate parity such that $\bar t^y$ and $\bar t^y\bar t$ have the same cycle structure as $\bar t$. This implies the existence of $s\in S$ with $st\in S$.
\end{proof}
\section{Proof of Theorem \ref{thm:symmetric}}\label{section:S_n}
The proof of Theorem~\ref{thm:symmetric} proceeds as follows. First, Theorem~\ref{Thm:Alternating} reduces the problem to conjugacy classes consisting of odd permutations. Lemma~\ref{lem:GSCP for symmetric} then shows that, apart from the class of transpositions, it is enough to prove the inclusion $S(1,2)\subseteq S^2$. Lemma~\ref{lem:cycle structure} describes the possible cycle structures occurring in $S(1,2)$ and reduces this inclusion to four cases. Subsection~\ref{subsection:case1a} treats the case of cutting an even cycle, while Subsection~\ref{subsection:case1b} deals with gluing two cycles whose total length is odd. Subsection~\ref{subsection:case2a} handles the gluing two cycles whose total length is even, and Subsection~\ref{subsection:case2b} treats the remaining case of cutting an odd cycle. These arguments exclude all relevant conjugacy classes, except certain classes of involutions. In Subsection~\ref{subsection:involutions with fixed points}, explicit local factorizations are used to exclude involutions having at least two fixed points. Finally, the two remaining cases are settled by representation-theoretic arguments. In Subsection~\ref{subsection:fixed-point-free-involutions}, Theorem~\ref{thm:(2^k)} excludes fixed-point-free involutions using the Murnaghan--Nakayama rule, Lemma~\ref{lem:2-core--BST}, and Young's rule, while in Subsection~\ref{subsection:one-fixed-point}, Theorem~\ref{thm:(2^k,1^1)} excludes involutions with one fixed point by combining a group-algebra argument with Lemma~\ref{lem:ket konjugalt osztaly negyezete} and a version of Lemma \ref{Lem:NecessaryCondition}.

We first dispose of the cases $n\leq4$. For $n\leq2$, the assertion is immediate. If $n=3$, the only nonidentity conjugacy class other than the class of transpositions is the class of $3$-cycles, and it satisfies $C\subseteq C^2=C^{-1}C$. If $n=4$, the classes of $3$-cycles and double transpositions also satisfy $C\subseteq C^2=C^{-1}C$. Hence none of these classes tiles, by Lemma~\ref{Lem:NecessaryCondition}.

It remains to consider the class $C$ of $4$-cycles in $S_4$. We have $C=C^{-1}$ and $C^2=A_4$. Indeed, $(1,2,3,4)(1,4,3,2)=1$, $(1,2,3,4)^2=(1,3)(2,4)$, and $(1,2,3,4)(1,2,4,3)$ is a $3$-cycle. Thus $C^2$ contains the identity, a double transposition, and a $3$-cycle; since $C^2$ is normal in $S_4$, it contains every element of $A_4$. Suppose that $(C,T)$ is a tiling of $S_4$, normalized so that $1\in T$. The uniqueness of the tiling gives
\[
(C^{-1}C)\cap(TT^{-1})=\{1\}.
\]
Since $C^{-1}C=A_4$, the set $T$ contains at most one element from each coset of $A_4$ in $S_4$, and hence $|T|\leq2$. This contradicts $|T|=|S_4|/|C|=4$. Thus Theorem~\ref{thm:symmetric} holds for $n\leq4$, and from now on we assume that $n\geq5$.

If $S$ is a conjugacy class of $S_n$ contained in $A_n$, then $S$ is normal in $A_n$. By Theorem~\ref{Thm:Alternating}, it does not tile $A_n$. If $(S,Y)$ tiled $S_n$, then restricting the factorization to the even elements would give the tiling $(S,Y\cap A_n)$ of $A_n$. Hence $S$ cannot tile $S_n$.
Notice that this holds for every subset of the group that is not a generating set. It tiles the whole group if and only if it tiles the subgroup it generates.
We may therefore restrict our attention to conjugacy classes of $S_n$ contained in $S_n\setminus A_n$.

Let $S$ be such a conjugacy class. Notice that $S$ is inverse-closed, that is, $S=S^{-1}$.
Now we generalize Lemma \ref{Lem:NecessaryCondition}.
\begin{lemma}\label{lem:GSCP for symmetric}
Let $S \subset S_n\setminus A_n$ be a conjugacy class of $S_n$ which is not the conjugacy class of transpositions. Assume $S~(1,2)\subset S^{-1}S=S^2$. Then $S$ is not a tile of $S_n$.
\end{lemma}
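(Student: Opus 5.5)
The plan is to rerun the argument of Lemma~\ref{Lem:NecessaryCondition}, but to cover the transposition $(1,2)$ instead of the identity. That puts an element of $S(1,2)$ into the tiling complement, and the hypothesis then gives the contradiction.

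Suppose, for a contradiction, that $S$ tiles $S_n$. Since $S$ is normal, it left-tiles, so there is a tiling $(S,T)$. Replacing $T$ by $Tg$ we may assume $1\in T$. Exactly as in the proof of Lemma~\ref{Lem:NecessaryCondition}, uniqueness of the factorization gives
\[
(S^{-1}S)\cap(TT^{-1})=\{1\}.
\]
Since $S$ is a conjugacy class of $S_n$, it is inverse-closed, so $S^{-1}S=S^2$.

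Now consider the element $(1,2)\in S_n$. By the tiling property, $(1,2)=sy$ for some $s\in S$ and $y\in T$, hence
\[
y=s^{-1}(1,2)\in S^{-1}(1,2)=S(1,2).
\]
Since $1\in T$, we get $y=y\cdot 1^{-1}\in TT^{-1}$. By the hypothesis $S(1,2)\subseteq S^2=S^{-1}S$, also $y\in S^{-1}S$. So $y$ lies in $(S^{-1}S)\cap(TT^{-1})=\{1\}$, which forces $y=1$.

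It remains to see that $y\neq1$. If $y=1$, then $s=(1,2)$, so $S$ would contain a transposition. As $S$ is a conjugacy class, $S$ would then be the class of transpositions, which the hypothesis excludes. Hence $y\ne 1$, a contradiction, and $S$ does not tile $S_n$.

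There is no real obstacle here; the only point needing care is that $y\neq 1$, which is exactly where the exclusion of the transposition class is used. Parity is not needed for the argument itself: $S\subseteq S_n\setminus A_n$ only explains why the original Lemma~\ref{Lem:NecessaryCondition} cannot apply, since $S^2\subseteq A_n$ is disjoint from $S$.
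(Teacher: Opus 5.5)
Your proposal is correct and is essentially the paper's own proof. You normalize so that $1\in T$, write $(1,2)=sy$, and observe that $y=s^{-1}(1,2)$ lies in both $S(1,2)\subseteq S^{-1}S$ and $TT^{-1}$, with $y\neq 1$ because $S$ is not the transposition class; this contradicts $(S^{-1}S)\cap(TT^{-1})=\{1\}$. Your closing remark is also accurate: the parity hypothesis is not used in the argument itself.
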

\begin{proof}
Suppose, for a contradiction, that $(S,Y)$ is a tiling of $S_n$, and normalized so that $1\in Y$. Write $(1,2)=sy$ with $s\in S$ and $y\in Y$. Since $S=S^{-1}$, we have $y=s^{-1}(1,2)\in S(1,2)$. Moreover, $y\ne1$, because otherwise $(1,2)=s\in S$, contrary to the assumption that $S$ is not the conjugacy class of transpositions. Since $1,y\in Y$, the element $y$ lies in $YY^{-1}$. Hence
\[
1\ne y\in (YY^{-1})\cap S(1,2)\subseteq (YY^{-1})\cap(S^{-1}S),
\]
contradicting the uniqueness of the tiling.
\end{proof}
Of course, $(1,2)$ in Lemma \ref{lem:GSCP for symmetric} can be replaced by any permutation of $S_n\setminus (A_n \cup S)$.

From now on, let $S$ be a conjugacy class of $S_n$ not contained in $A_n$. We plan to prove that $S(1,2)$ is contained in $S^2$. Since $S^2$ is a normal set in $S_n$ we only have to show that the conjugacy classes that intersect $S(1,2)$ are contained in $S^2$. A conjugacy class of $S_n$ is determined by the cycle structure of its elements, so we need to understand the cycle structure of the elements of $S(1,2)$.

For the next lemma, we regard the cycle structure of $\pi\in S_n$ as a $k$-tuple of positive integers $(\lambda_1,\lambda_2, \ldots ,\lambda_k)$ for some $1\le k \le n$ such that $\sum_{i=1}^k \lambda_i=n$. In this case, we do not impose any further restriction on these $k$-tuples, that is, the entries need not be written in increasing or decreasing order.
\begin{lemma}\label{lem:cycle structure}
    Let $\pi$ be a permutation in $S_n$ of cycle structure $(\lambda_1,\lambda_2, \ldots ,\lambda_k)$.
\begin{enumerate}
    \item[\rm (a)] If $1$ and $2$ are contained in the cycle of length $\lambda_1$, then the cycle structure of $\pi(1,2)$
is $(\eta_1,\eta_2,\lambda_2, \ldots ,\lambda_k)$ with $\lambda_1=\eta_1+\eta_2$.
Moreover $\eta_1-1$ is the number of elements in the cycle between $1$ and $2$.
\item[\rm (b)]
If $1$ is contained in a cycle of length $\lambda_1$ and $2$ is contained in a cycle of length $\lambda_2$, then the cycle structure of $\pi(1,2)$
is $(\lambda_1+\lambda_2, \lambda_3, \ldots ,\lambda_k)$.
\end{enumerate}
\end{lemma}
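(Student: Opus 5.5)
The plan is a direct computation of $\pi(1,2)$, using the convention that permutations act on the right, so $\pi$ is applied first and then $(1,2)$. For any point $i$ we have $i^{\pi(1,2)}=(i^\pi)^{(1,2)}$. This equals $i^\pi$ unless $i^\pi\in\{1,2\}$. So only the two points $a:=1^{\pi^{-1}}$ and $b:=2^{\pi^{-1}}$ change their images: now $a\mapsto 2$ and $b\mapsto 1$. Every cycle of $\pi$ that contains neither $1$ nor $2$ is therefore unchanged. It remains to follow the cycle or cycles through $1$ and $2$.

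For (a), I would write the $\lambda_1$-cycle of $\pi$ as
\[
(1,x_1,\ldots,x_{p},2,y_1,\ldots,y_{q}),\qquad p+q+2=\lambda_1,
\]
so that $x_1,\ldots,x_p$ are the $p$ elements strictly between $1$ and $2$. Here $a=y_q$ (or $a=2$ if $q=0$) and $b=x_p$ (or $b=1$ if $p=0$). Tracing images under $\pi(1,2)$, we get $1\mapsto x_1\mapsto\cdots\mapsto x_p\mapsto 1$, which is a cycle of length $\eta_1:=p+1$. We also get $2\mapsto y_1\mapsto\cdots\mapsto y_q\mapsto 2$, of length $\eta_2:=q+1$. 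Hence $\eta_1+\eta_2=\lambda_1$ and $\eta_1-1=p$, as claimed. The degenerate cases $p=0$ or $q=0$ need a brief check: there $1$ or $2$ becomes a fixed point, giving a $1$-cycle, which is consistent with the statement since $1$-cycles are included in the cycle structure.

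For (b), I would write the two cycles as $(1,x_1,\ldots,x_{\lambda_1-1})$ and $(2,y_1,\ldots,y_{\lambda_2-1})$. Then $a=x_{\lambda_1-1}$ and $b=y_{\lambda_2-1}$, with $a=1$ if $\lambda_1=1$ and similarly for $b$. Under $\pi(1,2)$ the orbit of $1$ is
\[
1\mapsto x_1\mapsto\cdots\mapsto x_{\lambda_1-1}\mapsto 2\mapsto y_1\mapsto\cdots\mapsto y_{\lambda_2-1}\mapsto 1,
\]
a single cycle of length $\lambda_1+\lambda_2$. The remaining cycles are unchanged.

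There is no real obstacle here. The only care needed is to apply the left-to-right multiplication convention consistently and to handle the degenerate cases where a cycle has length $1$, or where $1$ and $2$ are adjacent in the cycle.
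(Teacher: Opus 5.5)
Your proposal is correct and is the same direct verification the paper intends; the paper just says both parts are straightforward to check. Your argument observes that only $1^{\pi^{-1}}$ and $2^{\pi^{-1}}$ change their images under the left-to-right convention, then traces the resulting orbits, including the degenerate cases $p=0$, $q=0$ or $\lambda_i=1$, and this fills in exactly those details.
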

\begin{proof}
   It is straightforward to verify both statements.
\end{proof}
Let $\pi$ be an element of $S$. In particular $\pi$ is an odd permutation.
We determine whether $\pi(1,2)$ belongs to $S^2$ considering the following four exhaustive cases.
\begin{enumerate}
    \item \textbf{Case 1a:} The points $1$ and $2$ lie in the same cycle, and its length $\lambda_1$ is even.
    \item \textbf{Case 1b:} The points $1$ and $2$ lie in distinct cycles of lengths $\lambda_1$ and $\lambda_2$, and $\lambda_1+\lambda_2$ is odd.
    \item \textbf{Case 2a:} The points $1$ and $2$ lie in distinct cycles of lengths $\lambda_1$ and $\lambda_2$, and $\lambda_1+\lambda_2$ is even.
    \item \textbf{Case 2b:} The points $1$ and $2$ lie in the same cycle, and its length $\lambda_1$ is odd.
\end{enumerate}

\subsection{Case 1a}\label{subsection:case1a}

The following statement is attributed to Gleason, see Proposition 4 in \cite{Gleason}. Besides its role here, this statement also shows the well-known fact that every element of $A_n$ is a commutator in $S_n$.
\begin{lemma}\label{lem:gleason}
Let $\sigma$ be a permutation in $A_n$. Then $\sigma$ can be written as the product of two $n$-cycles.
\end{lemma}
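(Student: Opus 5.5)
We want: every even permutation $\sigma\in A_n$ is a product of two $n$-cycles. The plan is induction on $n$, building the two $n$-cycles cycle by cycle from the cycle decomposition of $\sigma$. Small cases $n\le 3$ are checked directly: for $n=3$, $A_3$ consists of $1$ and the two $3$-cycles, with $1=c c^{-1}$ and $c=c^{-1}c^{-1}$ for a $3$-cycle $c$. For $n=1,2$ only the identity occurs, and it is trivially a product of an $n$-cycle and its inverse.

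\textbf{Single cycles.} First handle the case where $\sigma$ itself has a simple form, by establishing that each ``building block'' is a product of two full cycles on its support. Concretely:
\begin{itemize}
\item an odd cycle $c$ of length $m$ satisfies $c=c^{-1}c^{2}$, and $c^2$ is again an $m$-cycle since $m$ is odd;
\item a pair of cycles of lengths $a,b$ (with $a+b$ points) is a product of two $(a+b)$-cycles on the union of supports, the key case being two even cycles;
\item a fixed point is absorbed by a gluing lemma.
\end{itemize}

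\textbf{Gluing.} The combinatorial engine is Lemma~\ref{lem:cycle structure}. If $\alpha\beta=\sigma_1$ on a set $A$ and $\gamma\delta=\sigma_2$ on a disjoint set $B$, with $\alpha,\beta$ full cycles on $A$ and $\gamma,\delta$ full cycles on $B$, pick $a\in A$ and $b\in B$. Then
\[
\alpha\gamma(a,b)\cdot(a,b)\beta\delta=\alpha\gamma\beta\delta=\sigma_1\sigma_2 .
\]
By Lemma~\ref{lem:cycle structure}(b), $\alpha\gamma(a,b)$ is an $(|A|+|B|)$-cycle. The element $(a,b)\beta\delta$ is conjugate to $\beta\delta(a,b)$, so it is also a single cycle on $A\cup B$. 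This gluing step is what I expect to be the main bookkeeping point: one must check that the transposition can be inserted on the correct sides given the left-to-right convention. If it cannot be done directly, I would instead pick $b'$ with $b'$ adjusted (e.g.\ replace $(a,b)$ by $(a^{\alpha^{-1}}\!,b)$) so that the inserted transpositions cancel.

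\textbf{Assembling.} Decompose $\sigma$ into odd cycles (fixed points included) and pairs of even cycles, which is possible since $\sigma$ is even. Realise each block as a product of two full cycles on its support, and glue all blocks together with the step above to obtain two $n$-cycles whose product is $\sigma$. The two-even-cycles block, say lengths $2p$ and $2q$, itself follows from the gluing trick: multiply suitable full cycles so that Lemma~\ref{lem:cycle structure}(a) splits one $(2p+2q)$-cycle into pieces of sizes $2p$ and $2q$. Alternatively, write it explicitly as in the formulas of Lemma~\ref{Lem:GSCP}.

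The main obstacle is this even-pair block together with correct side-handling in gluing. There the parity constraint bites, since a single even cycle is not a product of two full cycles on its own support. I would verify it first in the smallest case $(1,2)(3,4)=(1,3,2,4)(1,4,2,3)$ or similar by direct computation, then generalise with the insertion of transpositions.
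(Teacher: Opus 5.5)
There is a genuine gap at the block you yourself call the main obstacle. The parts you did carry out are correct. The identity $c=c^{-1}c^{2}$ handles odd cycles. Your gluing identity $\alpha\gamma(a,b)\cdot(a,b)\beta\delta=\sigma_1\sigma_2$ works exactly as written: $\beta$ and $\gamma$ commute, and both factors are full cycles by Lemma~\ref{lem:cycle structure}(b) together with conjugation by $(a,b)$. So no adjusted transposition is needed. (The paper itself gives no proof of this lemma; it cites Gleason via Husemoller.)

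The missing step is writing two disjoint even cycles, of lengths $2p$ and $2q$, as a product of two $(2p+2q)$-cycles. Without it your argument only covers permutations all of whose cycles have odd length, which is the easy part. None of your suggested routes supplies it:
\begin{itemize}
\item Plain gluing only combines blocks that already factor into full cycles on their supports, and a single even cycle does not.
\item The identities in Lemma~\ref{Lem:GSCP} have factors of the same cycle type as the product, not full cycles.
\item Your test case is wrong. Since $(1,4,2,3)=(1,3,2,4)^{-1}$, we get $(1,3,2,4)(1,4,2,3)=1$. The correct identity is $(1,3,2,4)^{2}=(1,2)(3,4)$, and the squaring trick only generalises to $p=q$.
\end{itemize}

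What is missing is a single transposition that merges cycles in one factor while splitting the cycle in the product.
\begin{enumerate}
\item Put $N=2p+2q$. Fix a point $z$ and an $(N-1)$-cycle $d$ on the remaining points.
\item Since $N-1$ is odd, your own gluing gives $d=\alpha\pi$, where $\alpha=d^{-1}(z,y)$ and $\pi=(z,y)d^{2}$ are both $N$-cycles.
\item Hence $\pi(z,w)=\alpha^{-1}\cdot d(z,w)$ for every $w\neq z$.
\item Because $d$ fixes $z$, the factor $d(z,w)$ is an $N$-cycle by Lemma~\ref{lem:cycle structure}(b).
\item By Lemma~\ref{lem:cycle structure}(a), $\pi(z,w)$ has cycle type $(j,N-j)$, and $j$ takes every value from $1$ to $N-1$ as $w$ varies.
\end{enumerate}
Choose $w$ with $j=2p$ and conjugate. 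This writes every permutation of type $(2p,2q)$ as a product of two $N$-cycles on its support, and then your assembly completes the proof.
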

As a corollary of this lemma, we obtain the following statement.
\begin{prop}
Let $\sigma$ be an even permutation of cycle structure $(\eta_1,\eta_2,\lambda_2,\ldots,\lambda_k)$ with $2\mid \eta_1+\eta_2=\lambda_1$. If $S$ is the conjugacy class of permutations of cycle structure $(\lambda_1,\lambda_2,\ldots,\lambda_k)$, then $\sigma\in S^2$.
\end{prop}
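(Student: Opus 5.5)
We use Gleason's Lemma~\ref{lem:gleason} locally, on the support of the cut cycle, and keep the other cycles of $\sigma$ essentially unchanged. Write $\sigma=\sigma_0\tau$, where $\sigma_0$ is the product of the two cycles of lengths $\eta_1,\eta_2$, with common support $\Delta$ of size $\lambda_1$, and $\tau$ is the product of the remaining cycles, of lengths $\lambda_2,\ldots,\lambda_k$, on the complement of $\Delta$. Fixed points count as $1$-cycles, so $\eta_1,\eta_2$ may equal $1$.

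\textbf{Step 1: the restriction to $\Delta$ is even.} Since $\eta_1+\eta_2$ is even, $\eta_1$ and $\eta_2$ have the same parity. A cycle of length $m$ has sign $(-1)^{m-1}$, so the two cycles contribute sign $(-1)^{\eta_1+\eta_2-2}=+1$. Hence $\sigma_0\in\Alt(\Delta)$.

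\textbf{Step 2: factor on $\Delta$.} Apply Lemma~\ref{lem:gleason} in $\Sym(\Delta)\cong S_{\lambda_1}$ to write $\sigma_0=ab$, where $a$ and $b$ are $\lambda_1$-cycles on $\Delta$. If $\lambda_1=2$, then $\eta_1=\eta_2=1$ and $\sigma_0=1$, and we may take $a=b=(x,y)$ with $\Delta=\{x,y\}$.

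\textbf{Step 3: factor the rest.} Every permutation is a product of two permutations of its own cycle type: an $m$-cycle $c$ satisfies $c=c^{2}c^{-1}$, but $c^2$ is an $m$-cycle only when $m$ is odd, so this identity does not handle even cycles. Instead, write each cycle $c_i$ of $\tau$ as $c_i=c_i\cdot 1$. Then set
\[
s_1=a\tau,\qquad s_2=b,
\]
so that $s_1s_2=ab\,\tau=\sigma_0\tau=\sigma$, because $\tau$ commutes with $b$ (their supports are disjoint).

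\textbf{Step 4: both factors must lie in $S$.} The factor $s_1=a\tau$ has cycle type $(\lambda_1,\lambda_2,\ldots,\lambda_k)$, so $s_1\in S$. The factor $s_2=b$, however, has cycle type $(\lambda_1,1^{n-\lambda_1})$, which is not in $S$ in general. To fix this, choose for each $i\ge2$ a $\lambda_i$-cycle $d_i$ on $\supp(c_i)$ together with another $\lambda_i$-cycle $e_i$ on the same support with $d_ie_i=c_i$. This is possible when $\lambda_i$ is odd via $c_i=c_i^{2}\,c_i^{-1}$, since $c_i^{2}$ is then a $\lambda_i$-cycle. It fails when $\lambda_i$ is even, because then $c_i$ is odd while any product $d_ie_i$ of two $\lambda_i$-cycles on the same support is even.

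Handling the even $\lambda_i$ with $i\ge2$ is the \textbf{main obstacle}. The remedy is to enlarge the local Gleason step: pair such even cycles with each other, or with the $\Delta$ block. A product of two even cycles $c_ic_j$ lies in $\Alt$ of its support, but we need it written as a product of two elements of type $(\lambda_i,\lambda_j)$ on that support, and Gleason only gives full-length cycles. Instead, I would use the simpler device $c=c^{g}\cdot(c^{g})^{-1}c$ with $c^{g}$ chosen cleverly, or more robustly the following. Every element of $\Sym(\Omega)$ is a product of two conjugates of any permutation $\omega$ on $\Omega$, provided the target has matching sign and $\omega$ has full support; this is the classical result that $C^2$ covers a large part of the appropriate coset.

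In practice, I would proceed as follows. Combine $\Delta$ with all the supports of even $\lambda_i$, $i\ge2$, into one set $\Omega$. On $\Omega$, the restriction of $\sigma$ is even, since the $\Delta$ part is even and the even $\lambda_i$ number an even count by the total parity. Apply Gleason's lemma blockwise: handle the $\Delta$ block as in Steps 1--3, and handle pairs of even cycles $c_i,c_j$ through the identity
\[
c_ic_j=(c_ic_j)^{g}\cdot\bigl((c_ic_j)^{g}\bigr)^{-1}c_ic_j,
\]
choosing $g$ by an explicit construction analogous to Lemma~\ref{Lem:GSCP}(b)--(d). For the odd cycles, use the identity $c_i=c_i^{2}c_i^{-1}$ from Step 4. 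In Step 2, also choose $a$ and $b$ to be $\lambda_1$-cycles. The supports are disjoint, so taking products over the blocks gives $s,s'\in S$ with $ss'=\sigma$, hence $\sigma\in S^2$.
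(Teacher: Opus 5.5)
Your final blockwise argument is correct and is essentially the paper's proof. You apply Gleason's lemma on $\Delta$, and you factor the even remainder into two permutations of its own cycle type on its own support: odd cycles via $c=c^2c^{-1}$, and the even cycles (evenly many of them) in pairs via GSCP witnesses, which is what the paper extracts from the proof of Theorem~\ref{Thm:Alternating}. Lemma~\ref{Lem:GSCP}(b)--(d) already covers every pair of even lengths $(2k,2\ell)$, so you can cite it directly rather than ``analogously''.

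You should delete the false start in Steps~3--4. You should also delete the aside that every even permutation of $\Omega$ is a product of two conjugates of any fixed-point-free $\omega$. That claim is false (take $\omega$ a fixed-point-free involution, cf.\ Lemma~\ref{lem:ket konjugalt osztaly negyezete}(a)), although your argument never uses it.
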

\begin{proof}
Write $\sigma=\sigma_0\sigma_1$, where $\sigma_0$ has cycle structure $(\eta_1,\eta_2)$ and $\sigma_1$ has cycle structure $(\lambda_2,\ldots,\lambda_k)$, with disjoint supports. Since $\lambda_1$ is even, we have $\eta_1\equiv\eta_2\pmod 2$, and hence $\sigma_0$ is in $A_n$. Lemma~\ref{lem:gleason} therefore expresses $\sigma_0$ as a product of two $\lambda_1$-cycles supported on $\supp(\sigma_0)$.

The permutation $\sigma_1=\sigma\sigma_0^{-1}$ is also even. The proof of Theorem~\ref{Thm:Alternating} shows that an even permutation $\rho$ can be written as a product of two permutations having the same cycle structure and support as $\rho$. Indeed, choose $a$ with $a\rho$ in the same conjugacy class and write $\rho=a^{-1}(a\rho)$.

Notice that we may only directly use the argument for cycles of odd length at least 5 or product of disjoints cycles with support at least 5. The cases on supports of size at most four are immediate.

Applying this to $\sigma_1$ and combining the two factorizations on disjoint supports gives the required factorization of $\sigma$ into two elements of $S$.
\end{proof}
\subsection{Case 1b}\label{subsection:case1b}
Let us assume that $S$ consists of the permutations of cycle structure $(\lambda_1,\lambda_2,\ldots , \lambda_k)$, where $2 \nmid \lambda_1+\lambda_2$.
\begin{lemma}
For all nonnegative integers $k$ and $\ell$,
    \begin{equation*}
    \begin{split}
            &(x_1,\dots,x_{2k},1)(2,y_1,\dots,y_{2\ell},3)(x_1,\dots,x_{2k},2)(1,y_1,\dots,y_{2\ell},3)
            \\ &= (x_1,x_3,\dots,x_{2k-1},2,y_2,y_4,\dots,y_{2\ell},1,x_2,x_4,\dots,x_{2k},y_1,y_3,\dots,y_{2\ell-1},3).
\end{split}
   \end{equation*}
\end{lemma}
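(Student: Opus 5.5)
The identity is a finite computation, so the plan is to verify it directly by tracking every point. Write
\[
a=(x_1,\dots,x_{2k},1)(2,y_1,\dots,y_{2\ell},3),\qquad b=(x_1,\dots,x_{2k},2)(1,y_1,\dots,y_{2\ell},3).
\]
By the paper's convention (permutations act on the right, products are evaluated left to right), $i^{ab}=(i^a)^b$. I will first treat the generic case $k,\ell\geq1$. Afterwards I will handle $k=0$ or $\ell=0$, where the corresponding indexed lists are empty.

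\textbf{Step 1 (images under $ab$ in the generic case).} I compute $i^{ab}$ for each point in turn.
\begin{itemize}
\item For $i\le 2k-2$: $x_i\mapsto x_{i+1}\mapsto x_{i+2}$.
\item $x_{2k-1}\mapsto x_{2k}\mapsto 2$.
\item $x_{2k}\mapsto 1\mapsto y_1$.
\item $1\mapsto x_1\mapsto x_2$.
\item $2\mapsto y_1\mapsto y_2$.
\item For $j\le 2\ell-2$: $y_j\mapsto y_{j+1}\mapsto y_{j+2}$.
\item $y_{2\ell-1}\mapsto y_{2\ell}\mapsto 3$.
\item $y_{2\ell}\mapsto 3\mapsto 1$.
\item $3\mapsto 2\mapsto x_1$.
\end{itemize}

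\textbf{Step 2 (assembling the cycle).} Chaining these images gives a single cycle:
\[
x_1\to x_3\to\cdots\to x_{2k-1}\to 2\to y_2\to\cdots\to y_{2\ell}\to 1\to x_2\to\cdots\to x_{2k}\to y_1\to y_3\to\cdots\to y_{2\ell-1}\to 3\to x_1 .
\]
This is exactly the cycle on the right-hand side of the identity. Every point of $\{1,2,3,x_i,y_j\}$ appears in it exactly once, and all other points are fixed by both $a$ and $b$. Hence the two sides agree.

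\textbf{Step 3 (degenerate cases).} When $k=0$ the cycles $(x_1,\dots,x_{2k},1)$ and $(x_1,\dots,x_{2k},2)$ become $1$-cycles. In that case $1\mapsto 1\mapsto y_1$ and $3\mapsto 2\mapsto 2$. With the convention that empty lists are omitted, the right-hand side reads $(2,y_2,\dots,y_{2\ell},1,y_1,\dots,y_{2\ell-1},3)$, and it is consistent with these images. The case $\ell=0$ is symmetric: there $2\mapsto 3\mapsto 1$ and $x_{2k}\mapsto 1\mapsto 3$, and the right-hand side again matches. The case $k=\ell=0$ gives $(2,3)(1,3)=(2,1,3)$, which also agrees.

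I expect no real obstacle. The only delicate points are the boundary transitions at $x_{2k-1},x_{2k},y_{2\ell-1},y_{2\ell}$, where the parity of the index decides which of the two interleaved chains a point joins, and the empty-list conventions in Step 3.

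Finally, a remark on how the lemma is used. In the cycle structure notation of Lemma~\ref{lem:cycle structure}, the two factors are products of a $(2k+1)$-cycle and a $(2\ell+2)$-cycle. Their product is a single cycle of length $2k+2\ell+3$, which is the gluing required in Case~1b.
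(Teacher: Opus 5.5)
Your proposal is correct and matches the paper, whose proof is simply to trace the image of each point under the left-to-right product. Your point-by-point images are all correct, including the boundary transitions at $x_{2k-1},x_{2k},y_{2\ell-1},y_{2\ell}$ and the degenerate cases $k=0$, $\ell=0$, and $k=\ell=0$.
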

\begin{proof}
This follows by tracing the image of each point.
\end{proof}
It follows that $S^2$ contains the conjugacy class of cycle structure $(\lambda_1+\lambda_2,\lambda_3,\ldots,\lambda_k)$. Indeed, the remaining cycles form an even permutation and can be factored independently on their disjoint support, as in the proof of Case~1a.
\subsection{Case 2a}\label{subsection:case2a}
In this case, we seek a pair of permutations of cycle structure $(\lambda_1,\lambda_2, \ldots, \lambda_k)$ whose product is of cycle structure $(\lambda_1+\lambda_2,\lambda_3, \ldots, \lambda_k)$ (thus gluing two cycles). These cycles are both odd or even, i.e. $\lambda_1\equiv \lambda_2 \pmod{2}$.

Let $\sigma$ be an element of $S$.
Let us write $\sigma=\sigma_0'\sigma_1'$, where the cycle structure of $\sigma_0'$ is $(\lambda_1, \lambda_2)$ and the cycle structure of
$\sigma_1'$ is $(\lambda_3, \ldots ,\lambda_k)$ and $\supp(\sigma_0')\cap \supp(\sigma_1')=\emptyset$.

Since $\lambda_1+\lambda_2$ is even, $\sigma_0'$ is an even permutation. We cannot expect to find two permutations of the same cycle structure whose product is a single cycle of length $\lambda_1+\lambda_2$ so we modify the decomposition of $\sigma$.

Thus $\sigma_1'$ is odd, so if we write it as the product of disjoint cycles, then one of them is a cycle of even length. Let us then write $\sigma_1'=\pi \sigma_1$, where $\supp(\pi)\cap \supp(\sigma_1)=\emptyset$ and $\pi$ is a cycle of length $2s=\lambda_3$. We can then write $\sigma=\sigma_0 \sigma_1$, where $\sigma_0=\sigma_0' \pi$. Note that $\sigma_1$ is an even permutation.

\textbf{Subcase 1:} We first consider the case $\lambda_1\equiv\lambda_2\equiv1\pmod 2$.
\begin{lemma}
For every pair of odd integers $u,v\ge3$ and every even integer $w\ge4$, there exist permutations $\alpha_1$ and $\alpha_2$ of cycle structure $(u,v,w)$, with $\supp(\alpha_1)=\supp(\alpha_2)$, such that $\alpha_1\alpha_2$ has cycle structure $(u+v,w)$.
\end{lemma}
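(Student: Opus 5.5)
We will prove the lemma by an explicit construction with parametrized cycles, in the spirit of Lemma~\ref{Lem:GSCP} and the Case~1b lemma: write $u=2a+1$, $v=2b+1$, $w=2c$ with $a,b\ge1$, $c\ge2$. We first find a small base configuration, then extend it to arbitrary lengths by inserting tails of paired points (like the labels $a_i,b_i$ in Lemma~\ref{Lem:GSCP}(d)).

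\textbf{Base case.} We will search by hand (or check by direct tracing) for $\alpha_1,\alpha_2$ on the $3+3+4=10$ points $\{1,\dots,10\}$, with $\alpha_1=(1,2,3)(4,5,6)(7,8,9,10)$ and $\alpha_2$ a conjugate of $\alpha_1$ such that $\alpha_1\alpha_2$ has cycle structure $(6,4)$. Parity is not an obstruction: both factors are odd, so the product is even, and $(6,4)$ is an even cycle type. A natural ansatz is to let $\alpha_2$ take one point of the $4$-cycle and one point of each $3$-cycle into a single "crossing" cycle structure. Then $\alpha_1\alpha_2$ joins the two $3$-cycles into a $6$-cycle while the $4$-points close back up among themselves. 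We will try $\alpha_2$ whose $3$-cycles each contain one point from each of the two $3$-cycles of $\alpha_1$ plus a point of the $4$-cycle. Its $4$-cycle then uses the leftover points, and we adjust by trial until the product has type $(6,4)$.

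\textbf{Extension.} To pass from $(3,3,4)$ to $(u,v,w)$, we enlarge each cycle by pairs of new points. For the first $3$-cycle we append $a_1,a'_1,\dots,a_{a-1},a'_{a-1}$ to the corresponding cycle of $\alpha_1$. In $\alpha_2$ we insert the $a_i$ in reverse order at one place and the $a'_i$ at another, exactly as in Lemma~\ref{Lem:GSCP}(d). Tracing shows the product contains the new points as an interleaved run $\dots,a'_{i},a_{i},\dots$, so the product's cycle through that region grows by $2(a-1)$ while the other cycles are unchanged. We do the same independently with new pairs for $v$ and for $w$. The $w$-tail must be inserted so that it lengthens the product's $w$-cycle rather than the glued cycle; we will choose the insertion positions inside the segment of the product's $4$-cycle.

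\textbf{Main obstacle.} The main difficulty is the base configuration together with choosing insertion points that behave uniformly. Each tail must be placed where $\alpha_1$ and $\alpha_2$ both have a "consecutive" pair of tracked points, so that the interleaving argument applies without disturbing the other cycles. We expect to verify the final formula, as the paper does elsewhere, by tracing images of each point. If a single uniform formula proves awkward, the fallback is a reduction: Lemma~\ref{Lem:GSCP}(a) handles enlarging odd cycles, and Lemma~\ref{Lem:GSCP}(b),(c) suggest that the $w$-cycle can be handled by a separate local gadget.
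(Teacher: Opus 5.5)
Your proposal has the right shape: the paper starts from the same $\alpha_1=(1,2,3)(4,5,6)(7,8,9,10)$ and lengthens cycles by inserting runs of points. But it stops exactly where the proof begins. The lemma is purely existential, so its proof \emph{is} an explicit $\alpha_2$ for the base case, explicit insertion positions for the tails, and a verification. You supply none of these (``adjust by trial'', ``we will choose the insertion positions'').

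Moreover, the mechanism you describe for the base cannot occur. Suppose the $6$-cycle of $\alpha_1\alpha_2$ lived on $\{1,\dots,6\}$ and its $4$-cycle on $\{7,\dots,10\}$. Then $\alpha_2=\alpha_1^{-1}(\alpha_1\alpha_2)$ would leave $\{1,\dots,6\}$ invariant and act there with type $(3,3)$. So $(\alpha_1\alpha_2)|_{\{1,\dots,6\}}$ would be even, whereas a $6$-cycle is odd. The same parity argument rules out your fallback. Handling the $w$-cycle by a separate gadget on a disjoint support would require a $(u+v)$-cycle, which is odd, to be a product of two permutations of type $(u,v)$, which are even. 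The even cycle must take part in the gluing, which is why it appears in the statement at all.

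The extension also needs a precise insertion rule with checkable hypotheses. One rule that works: if $\alpha_1$ maps $p\mapsto q$ and $\alpha_2$ maps $q\mapsto r$, insert the same run $x_1,\dots,x_{2m}$ after $p$ in $\alpha_1$ and after $q$ in $\alpha_2$. This changes the product only by replacing $p\mapsto r$ with $p\mapsto x_2\mapsto x_4\mapsto\cdots\mapsto x_{2m}\mapsto x_1\mapsto x_3\mapsto\cdots\mapsto x_{2m-1}\mapsto r$. For each tail you must then find such an arc lying in the correct cycle of $\alpha_1$, of $\alpha_2$ (with different $3$-cycles of $\alpha_2$ for the $u$- and $v$-tails), and of the product. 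This is not automatic. For the paper's base $\alpha_2=(1,4,5)(2,7,9)(6,8,10,3)$, no arc of this kind serves the $w$-tail. The paper instead inserts the $z_i$ after $8$ in $\alpha_1$ but before $8$ in $\alpha_2$.

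Your ansatz can in fact be completed. Take $\alpha_2=(1,7,4)(2,8,5)(3,6,9,10)$, which gives $\alpha_1\alpha_2=(1,8,10,4,2,6)(3,7,5,9)$. The arcs $(p,q,r)=(1,2,8)$, $(6,4,1)$, $(9,10,3)$ then carry the $u$-, $v$- and $w$-tails respectively. But until a configuration of this kind is written down and traced, the proposal is a search strategy rather than a proof. The paper's proof is precisely one explicit identity, valid for all $a,b,c\ge0$, checked by tracing images.
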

\begin{proof}
For all nonnegative integers $a,b,c$, we have
    \begin{equation*}
        \begin{split}
        &\underbrace{(1,x_1,\dots,x_{2a},2,3)(4,y_1,\dots,y_{2b},5,6)(7,8,z_1,\dots,z_{2c},9,10)}_{\alpha_1} \cdot
        \\& \underbrace{(y_1,\dots,y_{2b},1,4,5)(2,x_1,\dots,x_{2a},7,9)(6,z_1,\dots,z_{2c},8,10,3)}_{\alpha_2}
        \\= &(1,x_2,x_4,\dots,x_{2a},x_1,x_3,\dots,x_{2a-1},7,10,9,3,4,y_2,y_4,\dots,y_{2b},\\ & y_1,y_3,\dots,y_{2b-1})
        (2,6,5,z_1,z_3,\dots,z_{2c-1},8,z_2,z_4,\dots,z_{2c})=\alpha_1\alpha_2.
        \end{split}
    \end{equation*}
The factors have cycle lengths $2a+3$, $2b+3$, and $2c+4$, while their product has cycle structure $(2a+2b+6,2c+4)$.
\end{proof}

\begin{lemma}
For every pair of odd integers $u,v\ge3$, there exist permutations $\alpha_1$ and $\alpha_2$ of cycle structure $(u,v,2)$, with $\supp(\alpha_1)=\supp(\alpha_2)$, such that $\alpha_1\alpha_2$ has cycle structure $(u+v,2)$.
\end{lemma}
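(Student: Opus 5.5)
The plan is to follow the template of the preceding lemma: give one explicit parametrized pair of permutations in cycle notation and verify it by tracing points. Write $u=2a+3$ and $v=2b+3$ with $a,b\ge 0$. The common support of $\alpha_1$ and $\alpha_2$ will be a fixed core of $8$ labelled points $\{1,\dots,8\}$ together with padding points $x_1,\dots,x_{2a}$ and $y_1,\dots,y_{2b}$. A parity check is consistent with the claim: $\alpha_1$, $\alpha_2$ and the target product are all odd$\cdot$odd$\cdot$even, i.e.\ even permutations, so nothing obstructs the construction.

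\textbf{Step 1: the core case $u=v=3$.} Find two permutations of cycle type $(3,3,2)$ on $\{1,\dots,8\}$ whose product has cycle type $(6,2)$. I would look for this by hand, starting from $\alpha_1=(1,2,3)(4,5,6)(7,8)$ and trying $\alpha_2$ in which the transposition joins one point of each $3$-cycle and the two $3$-cycles each mix points from both $3$-cycles of $\alpha_1$ and from $\{7,8\}$. A short computer search over the $S_8$-conjugates of $\alpha_1$ settles this immediately if the hand search stalls. I expect such pairs to exist: the structure constant $|\{(\alpha_1,\alpha_2)\in C\times C : \alpha_1\alpha_2=\gamma\}|$, for $C=\mathcal C_{(3,3,2)}$ and a fixed $\gamma$ of type $(6,2)$, is generically large.

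\textbf{Step 2: inflation by pairs of points.} This is the mechanism already visible in the previous lemma. Suppose $\alpha_1$ contains a segment $(\dots,p,q,\dots)$ and $\alpha_2$ contains a segment $(\dots,r,s,\dots)$. Insert the same string $x_1,\dots,x_{2a}$ between $p$ and $q$ in $\alpha_1$ and between $r$ and $s$ in $\alpha_2$, in the same order. In the product (left to right) we then have $x_i\mapsto x_{i+2}$ for $i\le 2a-2$, while $p\mapsto$ the $\alpha_2$-image of $x_1$, and the tail $x_{2a-1},x_{2a}$ lands on $s$ or on the $\alpha_2$-image of $q$. So the odd- and even-indexed $x$'s appear as two runs spliced into the product, exactly as in the displayed products of the previous lemmas.

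To make this work, choose $p,q$ inside the $3$-cycle of $\alpha_1$ that is to have length $u$. Choose $r,s$ inside a $3$-cycle of $\alpha_2$; this is needed so that the cycle types of both factors change in the same way, as $\alpha_1,\alpha_2$ must stay conjugate. Then check, using the core computation, that both runs get absorbed into the $6$-cycle of the product and not into the $2$-cycle. That gives a product of type $(6+2a,2)$. Do the same with $y_1,\dots,y_{2b}$ for the other $3$-cycles to reach $(u+v,2)$. The two insertions use disjoint core positions, so they do not interfere, and the resulting formula covers all $a,b\ge0$ uniformly, empty lists being omitted.

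\textbf{Main obstacle.} The delicate step is choosing the core pair in Step 1 compatibly with Step 2. We need core arcs $p\to q$ in $\alpha_1$ and $r\to s$ in $\alpha_2$, both lying in $3$-cycles, such that the $\alpha_2$-image of $x_1$ and the tail of the run connect back into the $6$-cycle of the core product. Only then is the $2$-cycle of the product left untouched and no extra cycle split off. I would first try arcs for which $p^{\alpha_2}$ and $q$ already lie on the product's $6$-cycle. If one core pair fails, I would search the (small) set of core pairs for one admitting two such arcs, one for the $x$'s and one for the $y$'s, in different $3$-cycles. After that, the final verification is routine tracing of images, as in the proof of the previous lemma.
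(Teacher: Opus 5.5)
Your plan is the paper's plan: one explicit identity built from an $8$-point core of type $(3,3,2)$, inflated by inserting the same even-length string into one arc of each factor, so that the odd- and even-indexed inserted points reappear as two runs inside the long cycle of the product.

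The gap is that you never produce the witness. For an existence statement of this kind, the explicit pair is the entire proof. The remark that the structure constant is ``generically large'' and the promise of a hand or computer search do not show that a core pair exists. Even a nonzero structure constant would not guarantee two compatible insertion arcs lying in different $3$-cycles of both factors. The paper closes this with
\[
\alpha_1=(1,2,3)(4,5,6)(7,8),\qquad \alpha_2=(1,4,3)(2,6,8)(5,7),\qquad \alpha_1\alpha_2=(1,6,3,4,7,2)(5,8).
\]
It inserts $x_1,\dots,x_{2a}$ between $1$ and $2$ in $\alpha_1$ and between $2$ and $6$ in $\alpha_2$. It inserts $y_1,\dots,y_{2b}$ between $6$ and $4$ in $\alpha_1$ and between $1$ and $4$ in $\alpha_2$. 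Tracing points then gives
\[
\alpha_1\alpha_2=(1,x_2,\dots,x_{2a},x_1,\dots,x_{2a-1},6,y_2,\dots,y_{2b},3,y_1,\dots,y_{2b-1},4,7,2)(5,8).
\]

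Two further corrections. First, your Step 2 computation shows that, in the core product $\pi$, the even-indexed run is spliced into the arc $p\to q^{\alpha_2}$ and the odd-indexed run into the arc $r^{\alpha_1^{-1}}\to s$. So the condition to check is that $p$ and $r^{\alpha_1^{-1}}$ lie on the $6$-cycle of $\pi$. The heuristic you propose, that $p^{\alpha_2}$ and $q$ lie on it, looks at the wrong points. With the core above, the $\alpha_1$-arc $5\to 6$ passes your test ($5^{\alpha_2}=7$), yet $5$ lies on $(5,8)$, so the inserted run would enlarge the transposition. Conversely, the arc $6\to 4$ that the paper actually uses fails your test ($6^{\alpha_2}=8$).

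Second, your parity check is misstated. Permutations of type $(u,v,2)$ with $u,v$ odd are odd, and the target type $(u+v,2)$ is even. That is the consistency you want, since a product of two odd permutations is even.
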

\begin{proof}
    \begin{equation*}
    \begin{split}
    &(1,x_1,\dots,x_{2k},2,3)(4,5,6,y_1,\dots,y_{2\ell})(7,8) \cdot (1,y_1,\dots,y_{2\ell},4,3)(2,x_1,\dots,x_{2k},6,8)\\ &(5,7) =  (1,x_2,x_4,\dots,x_{2k},x_1,x_3,\dots,x_{2k-1},6,y_2,y_4,\dots,y_{2\ell},3,y_1,y_3,\dots,\\ & y_{2\ell-1},4,7,2)(5,8).
    \end{split} \end{equation*}
\end{proof}
It remains to glue a $1$-cycle to a nontrivial odd cycle. The following identity gives the required construction:
\begin{equation*}
\begin{split}
&(1)(2,x_1,x_2,\ldots,x_{2k},3,4)(5,6,y_1,y_2,\ldots,y_{2\ell}) \cdot (2)(1,4,6,x_1,x_2,\ldots,x_{2k}) \cdot \\&(3,5,y_1,y_2,\ldots,y_{2\ell})  = (1,4,2,x_2,x_4,\ldots,x_{2k},5,x_1,x_3,\ldots,x_{2k-1}) \\ &(3,6,y_2,y_4,\ldots,y_{2\ell},y_1,y_3,\ldots,y_{2\ell-1}).
\end{split}
\end{equation*}
The left-hand side is a product of two permutations of cycle structure $(1,2k+3,2\ell+2)$, and the right-hand side has cycle structure $(2k+4,2\ell+2)$.
These constructions yield the following statement.
\begin{prop}
\begin{enumerate}[\rm(a)]
    \item
    Let $S$ be the conjugacy class of odd permutations of cycle structure $(\lambda_1,\dots, \lambda_k)$, where $\lambda_1$ and $\lambda_2$ are odd integers. Then every permutation of cycle structure $(\lambda_1+\lambda_2,\lambda_3,\dots, \lambda_k)$ is contained in $S^2$.
   % , provided that at least one of $\lambda_i$'s is an even number larger than $2$.
\item
 Let $S$ be the conjugacy class of odd permutations of cycle structure $(\lambda_1,\dots, \lambda_k)$, where $\lambda_1 \ge 2$ and $\lambda_2 \ge 4$ are even integers. Then every permutation of cycle structure $(\lambda_1+\lambda_2,\lambda_3,\dots, \lambda_k)$ is contained in $S^2$.
   \end{enumerate}

\end{prop}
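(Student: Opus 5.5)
We assemble the factorization locally on disjoint supports, as in Cases~1a and~1b. Fix $\tau$ of cycle structure $(\lambda_1+\lambda_2,\lambda_3,\dots,\lambda_k)$; it suffices to write $\tau=\alpha\beta$ with $\alpha,\beta\in S$. Since $\lambda_1+\lambda_2$ is even and $S$ consists of odd permutations, $(\lambda_3,\dots,\lambda_k)$ describes an odd permutation. Hence some $\lambda_j$ with $j\ge3$ is even, say $\lambda_3=w$. As in the discussion preceding the statement, we split $\tau=\tau_0\tau_1$ with disjoint supports. Here $\tau_0$ has cycle structure $(\lambda_1+\lambda_2,w)$, and $\tau_1$ has cycle structure $(\lambda_4,\dots,\lambda_k)$. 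Then $\tau_1$ is even, because $\tau$ and $\tau_0$ have the same parity.

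By the argument in the proof of the Proposition of Case~1a (ultimately the proof of Theorem~\ref{Thm:Alternating}, with small supports handled directly), $\tau_1=\beta_1\gamma_1$ with $\beta_1,\gamma_1$ of the same cycle structure and support as $\tau_1$. It then remains to write $\tau_0=\beta_0\gamma_0$ with $\beta_0,\gamma_0$ of cycle structure $(\lambda_1,\lambda_2,w)$ on $\supp(\tau_0)$. Setting $\alpha=\beta_0\beta_1$ and $\beta=\gamma_0\gamma_1$ then gives $\alpha,\beta\in S$ and $\alpha\beta=\tau$.

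For part (a), where $\lambda_1,\lambda_2$ are odd, the local factorization of $\tau_0$ comes from the lemmas just proved. If $\lambda_1,\lambda_2\ge3$, use the lemma for $(u,v,w)$ when $w\ge4$ and the lemma for $(u,v,2)$ when $w=2$. If exactly one of them is $1$, use the final displayed identity gluing a fixed point to an odd cycle of length $2k+3\ge3$ in the presence of an even cycle of length $2\ell+2$. If $\lambda_1=\lambda_2=1$, we must produce two permutations of type $(1,1,w)$ whose product has type $(2,w)$. This is an explicit small check: for instance, multiply a $w$-cycle by a conjugate $w$-cycle fixing two different points, and trace that the product is a transposition times a $w$-cycle. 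I would write such an identity by hand, in the same style as the lemmas.

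For part (b), with $\lambda_1\ge2$ and $\lambda_2\ge4$ even, the role of the even cycle $w$ is unnecessary. The pair $(\lambda_1,\lambda_2)$ is itself even, so here I would instead take $\tau_0$ of type $(\lambda_1+\lambda_2)$, a single cycle, and $\tau_1$ the rest, which is now odd. This does not split nicely, so I would rather keep an extra cycle. Concretely, I would seek explicit identities analogous to the $(u,v,w)$ lemma, producing two permutations of type $(2a,2b)$, with $b\ge2$, together with whatever auxiliary cycle is needed, whose product is a $(2a+2b)$-cycle on the same support. For this, parity must match: two even-type permutations multiply to an even permutation, but a $(2a+2b)$-cycle is odd. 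So an auxiliary even cycle $\lambda_3$ is again required, and we factor the types $(2a,2b,\lambda_3)$ into $(2a+2b,\lambda_3)$. The main obstacle is finding these explicit gluing identities, for all $a\ge1$, $b\ge2$ and $\lambda_3\ge2$ even. I would first construct a base identity for the smallest cases and then insert interleaved blocks $x_1,\dots,x_{2a'}$ into cycles as in the displayed lemmas, so that lengths grow by $2$ while the interleaving pattern of the product persists. The restriction $\lambda_2\ge4$ is presumably exactly where such a base identity exists, with the case $(2,2,\dots)$ excluded.
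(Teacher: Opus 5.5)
\textbf{Gap in part (b).} Your reduction is the same as the paper's. You peel off the two cycles to be glued together with an auxiliary even cycle $w$, which exists by parity, and factor the even remainder independently as in Case~1a. Everything then reduces to writing a permutation of type $(\lambda_1+\lambda_2,w)$ as a product of two permutations of type $(\lambda_1,\lambda_2,w)$ on the same support. For (a), your case split matches the paper's use of the preceding identities.

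The genuine gap is in (b), whose whole content is the existence of this local factorization when $\lambda_1\ge2$, $\lambda_2\ge4$ and $w\ge2$ are even. You call it ``the main obstacle'' and your parity count only shows that an auxiliary even cycle is needed, not that the factorization exists. You then conjecture that some base identity exists and survives the insertion of interleaved blocks, so (b) is not proved. (Your opening remark that $w$ is unnecessary in (b) is also wrong, as your own parity count then shows.) The paper fills this with an explicit identity valid for all integers $k,\ell,m\ge0$:
\begin{align*}
&(1,2,3,y_1,\dots,y_{2\ell},4)(7,8,z_1,\dots,z_{2m})(5,x_1,\dots,x_{2k},6)\\
&\qquad\cdot(2,5,8,4,y_1,\dots,y_{2\ell})(3,7,z_1,\dots,z_{2m})(1,6,x_1,\dots,x_{2k})\\
&=(2,7,4,6,8,z_2,z_4,\dots,z_{2m},z_1,z_3,\dots,z_{2m-1},3,\\
&\qquad y_2,y_4,\dots,y_{2\ell},y_1,y_3,\dots,y_{2\ell-1})(1,5,x_2,x_4,\dots,x_{2k},x_1,x_3,\dots,x_{2k-1}).
\end{align*}
Both factors have type $(2\ell+4,2m+2,2k+2)$ and the product has type $(2\ell+2m+6,2k+2)$. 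This covers every $\lambda_2=2\ell+4\ge4$, $\lambda_1=2m+2\ge2$ and $w=2k+2\ge2$. Its smallest case is $(1,2,3,4)(7,8)(5,6)\cdot(2,5,8,4)(3,7)(1,6)=(2,7,4,6,8,3)(1,5)$.

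\textbf{Smaller omission in part (a).} When $\lambda_1=\lambda_2=1$, you describe the shape of the needed identity but never write it down. For $w=2$ the paper uses $(3,4)\cdot(1,2)=(1,2)(3,4)$ on $\{1,2,3,4\}$. For $w=2\ell+4$ it uses
\[
(3,x_1,\dots,x_{2\ell},4,5,6)\cdot(1,x_1,\dots,x_{2\ell},3,2,6)=(2,6)(1,x_1,x_3,\dots,x_{2\ell-1},3,x_2,x_4,\dots,x_{2\ell},4,5),
\]
where the first factor fixes $1,2$ and the second fixes $4,5$. With these two identities supplied, your argument becomes the paper's proof.
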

\begin{proof}
The constructions above prove the first assertion except when $\lambda_1=\lambda_2=1$, which is handled immediately below. The even--even assertion follows from the subsequent constructions. In every case, the remaining even permutation is factored independently on the complementary support.
\end{proof}

Let us handle the case when $\lambda_1=\lambda_2=1$. More precisely, let $\pi$ be an odd permutation which fixes $y_1$ and $y_2$. We would like to find two permutations $\alpha_i$ ($i=1,2$) which are of the same cycle structure as $\pi$ and such that $\pi(y_1,y_2)=\alpha_1\alpha_2$.

As before we may assume the cycle structure of $\pi$ contains a cycle of length $2k$.
\begin{itemize}
    \item Let us first ``glue'' two $1$-cycles (fixed points) i.e., $k=1$: $(1)(2)(3,4)\cdot(3)(4)(1,2)=(1,2)(3,4)$
    \item Let $k \ge 2$. The following identity shows that the product of two permutations of the same support of cycle structure $(1^2,2\ell+4)$, where $\ell\ge 0$ can be of cycle structure $(2^1,2\ell+4)$.
    \begin{equation}
    \begin{split}
&(1)(2)(3,x_1,\dots,x_{2\ell},4,5,6)\cdot(4)(5)(1,x_1,\dots,x_{2\ell},3,2,6)\\ = &(2,6)(1,x_1,x_3,\dots,x_{2\ell-1},3,x_2,x_4,\dots,x_{2\ell},4,5).  \end{split}
\end{equation}
\end{itemize}

\textbf{Subcase 2:}
Let us assume now that $\lambda_1 \equiv \lambda_2 \equiv 0 \pmod{2}$.

Similarly to the previous case, we would like to show that for many triples $(a,b,c)$ of positive integers, there are permutations $\alpha_1$ and $\alpha_2$ with $\supp(\alpha_1)=\supp(\alpha_2)$ of cycle structure $(2a,2b,2c)$ such that the cycle structure of $\alpha_1\alpha_2$ is $(2a+2b,2c)$. We will be able to find such a pair of permutations when $a\ge 2$ and $b,c\ge 1$.

The following identity holds for any nonnegative integers $k,\ell,m$.
\begin{equation}
\begin{split}
   % &\underbrace{(1,2,3,y_1,y_2,\dots,y_{2l},4)(7,8,z_1,z_2,\dots,z_{2m})(5,x_1,x_2,\dots,x_{2k},6)\cdot}_{\alpha_1} \\
    %&\underbrace{(2,5,z_1,z_2,\dots,z_{2m},8,4,y_1,y_2,\dots,y_{2l})(3,7)(1,6,x_1,x_2,\dots,x_{2k})}_{\alpha_2}
    %\\= &(1,5,x_2,x_4,\dots,x_{2k},x_1,x_3,\dots,x_{2k-1})\\&(6,z_1,z_3,\dots,z_{2m-1},8,z_2,z_4,\dots,z_{2m},3,y_2,y_4,\dots,y_{2l},%y_1,y_3,\dots,y_{2l-1},2,7,4).
   &(1,2,3,y_1,y_2,\dots,y_{2\ell},4)(7,8,z_1,z_2,\dots,z_{2m})(5,x_1,x_2,\dots,x_{2k},6)\cdot \\
   &(2,5,8,4,y_1,y_2,\dots,y_{2\ell})(3,7,z_1,z_2,\dots,z_{2m})(1,6,x_1,x_2,\dots,x_{2k}) \\
   =&(2,7,4,6,8,z_2,z_4,\dots,z_{2m},z_1,z_3,\dots,z_{2m-1},3,y_2,y_4,\dots,y_{2\ell},y_1,y_3,\dots,\\ & y_{2\ell-1})
     (1,5,x_2,x_4,\dots,x_{2k},x_1,x_3,\dots,x_{2k-1}).
     \end{split}
\end{equation}
The cycle structure of both $\alpha_1$ and $\alpha_2$ is $(2\ell+4,2m+2,2k+2)$, while the cycle structure of $\alpha_1\alpha_2$ is $(2\ell+2m+6,2k+2)$, as required.
We therefore obtain the following.
\begin{lemma}
For every pair of even integers $u\ge2$ and $v\ge4$, and every even integer $w\ge2$, there exist $\alpha_1$ and $\alpha_2$ of cycle structure $(u,v,w)$ such that the cycle structure of $\alpha_1\alpha_2$ is  $(u+v,w)$.
\end{lemma}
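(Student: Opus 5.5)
The lemma should follow directly from the displayed identity preceding it, through a change of parameters. Put $u=2k+2$, $v=2\ell+4$ and $w=2m+2$ with $k,\ell,m\geq 0$. The hypotheses $u\geq2$, $v\geq4$, $w\geq2$ with all three even are then equivalent to $k,\ell,m$ being nonnegative integers. In that identity:
\begin{itemize}
\item $\alpha_1$ is the first line, the product of the cycle $(1,2,3,y_1,\dots,y_{2\ell},4)$ of length $2\ell+4=v$, the cycle $(7,8,z_1,\dots,z_{2m})$ of length $2m+2=w$, and the cycle $(5,x_1,\dots,x_{2k},6)$ of length $2k+2=u$;
\item $\alpha_2$ is the second line, whose cycles $(2,5,8,4,y_1,\dots,y_{2\ell})$, $(3,7,z_1,\dots,z_{2m})$ and $(1,6,x_1,\dots,x_{2k})$ have lengths $2\ell+4$, $2m+2$ and $2k+2$, in the same order.
\end{itemize}
So both factors have cycle structure $(u,v,w)$. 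Up to ordering, this is the structure demanded by the lemma.

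Next I would check that the product has the claimed shape. The right-hand side has one cycle of length $2\ell+2m+6$, containing $2,7,4,6,8$, $3$, all $z_i$ and all $y_i$. The other cycle, $(1,5,x_2,\dots,x_{2k},x_1,\dots,x_{2k-1})$, has length $2k+2=u$. Since $2\ell+2m+6=v+w$, the product has cycle structure $(v+w,u)$.

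This is where relabelling comes in, because the glued pair here is $(v,w)$, not $(u,v)$. The lemma asks for the $u$- and $v$-cycles to be glued while the $w$-cycle survives. I would therefore apply the identity with the roles swapped: take $v'$ to be the larger-or-$\geq4$ length playing the $2\ell+4$ slot, take $w'=u$ in the $2m+2$ slot, and leave $v$'s partner in the $2k+2$ slot. Concretely, set $2\ell+4=v\geq4$, $2m+2=u\geq2$ and $2k+2=w\geq2$. Both factors then have cycle structure $(v,u,w)$, which equals $(u,v,w)$ as a multiset, and the product has structure $(v+u,w)$, exactly as required. Every parameter stays nonnegative under this assignment, so the identity applies. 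The degenerate cases $k=0$, $\ell=0$ or $m=0$ are covered by the convention that empty indexed lists are omitted.

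The only real work is verifying the displayed identity by tracing points, using the left-to-right convention, with $\alpha_1$ applied first. One checks the fixed labels $1,\dots,8$ individually. For the interior lists, $x_i\mapsto x_{i+1}$ under $\alpha_1$ and then $x_{i+1}\mapsto x_{i+2}$ under $\alpha_2$, so $\alpha_1\alpha_2$ steps each list by two. This gives the even/odd interleaving; the endpoints of each list are the places to double-check. I expect this bookkeeping at the boundaries of the $x$-, $y$- and $z$-lists, such as $4\mapsto 1\mapsto 6$ and $6\mapsto5\mapsto8$, to be the only obstacle. Any slip there would be repaired by reordering the fixed labels.
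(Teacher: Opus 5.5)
Your proposal is correct and is the paper's argument: the lemma is read off from the preceding identity, which glues the $(2\ell+4)$- and $(2m+2)$-cycles and keeps the $(2k+2)$-cycle, so your final assignment $v=2\ell+4$, $u=2m+2$, $w=2k+2$ is exactly what is needed. Tracing points confirms the identity as displayed, including the degenerate cases $k=0$, $\ell=0$, $m=0$, so your fallback about repairing slips by reordering labels is not needed.
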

Let us turn to the case when $u=v=2$. In this case we need to have another even cycle of length $w$ in the cycle structure. We first assume $w\ge 4$. The following equality shows that we may obtain a permutation of cycle structure $(4^1,(4+2\ell)^1)$ as the product of two permutations of the same support of cycle structure $(2^2,(4+2\ell)^1)$.
\begin{equation}
\begin{split}
&(1,2)(3,4)(5,6,x_1,x_2,\dots,x_{2\ell},7,8) \cdot (2,3)(4,5)(1,6,7,x_1,x_2,\dots,x_{2\ell},8) \\ = &(1,3,5,7)(2,6,x_2,x_4,\dots,x_{2\ell},x_1,x_3,\dots,x_{2\ell-1},8,4).
     \end{split}
\end{equation}

We now address the case where we glue two $2$-cycles of an odd permutation but the remaining even cycles are also of length 2.
Assume now that the cycle structure under consideration also contains an odd cycle.

The following equation holds for every integer $\ell\ge 0$.
\begin{equation*}
\begin{split}
&(1,2)(3,4)(5,6)(7,8,x_1,x_2,\dots,x_{2\ell},9) \cdot (1,4,9,x_1,x_2,\dots,x_{2\ell})(2,6)(3,7)(5,8) \\ = &(1,6,8,x_2,x_4,\dots,x_{2\ell},x_1,x_3,\dots,x_{2\ell-1})(2,4,7,5)(3,9).
\end{split}
\end{equation*}
Combining the preceding constructions gives the following.
\begin{prop}
Let $S$ be the conjugacy class of odd permutations of cycle structure $(\lambda_1,\dots,\lambda_k)$, where $\lambda_1\equiv\lambda_2\pmod 2$. Assume that this cycle structure contains a cycle of length at least $3$. Then every permutation of cycle structure $(\lambda_1+\lambda_2,\lambda_3,\dots,\lambda_k)$ belongs to $S^2$.
\end{prop}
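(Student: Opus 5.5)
We assemble the claim from the local gluing identities of Case~2a, by the same disjoint-support scheme as in Cases~1a and~1b. Take $\sigma$ of cycle structure $(\lambda_1+\lambda_2,\lambda_3,\dots,\lambda_k)$. We split its support into a \emph{core}, carrying the glued cycle of length $\lambda_1+\lambda_2$ together with at most one auxiliary cycle (and possibly one more), and a \emph{remainder}. On the core we use one of the explicit identities to write the core part of $\sigma$ as $\alpha_1\alpha_2$, where both factors have the corresponding $S$-local cycle structure and the same support. On the remainder we want the leftover part to be an even permutation. Then, as in the proof of Case~1a (Theorem~\ref{Thm:Alternating} together with the direct check for supports of size at most four), it factors as a product of two permutations with its own cycle structure and support. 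Multiplying the factorizations on the disjoint supports gives two elements of $S$ whose product is $\sigma$.

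The choice of the auxiliary cycle is forced by parity. Since $S$ consists of odd permutations and $\lambda_1+\lambda_2$ is even, the cycles $\lambda_3,\dots,\lambda_k$ form an odd permutation, so among them there is an even cycle of length $w=\lambda_3$. We take this cycle into the core; the rest, $\sigma_1$, is then even. We proceed by cases.

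\emph{Both $\lambda_1,\lambda_2$ odd.} If $\lambda_1,\lambda_2\ge3$, use the $(u,v,w)$ lemma when $w\ge4$ and the $(u,v,2)$ lemma when $w=2$. If exactly one of them equals $1$, use the identity gluing a fixed point to a nontrivial odd cycle. That identity needs an even cycle of length $2\ell+2$ and an odd cycle of length $2k+3\ge3$, which is exactly our situation. If both equal $1$, use the two displayed identities for $w=2$ and $w\ge4$.

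\emph{Both $\lambda_1,\lambda_2$ even.} If one of them is at least $4$, say $\lambda_2\ge4$ after relabelling, apply the $(u,v,w)$ even lemma with $w=\lambda_3$. If $\lambda_1=\lambda_2=2$ and some further even cycle has length $w\ge4$, put that cycle in the core and use the $(2^2,4+2\ell)$ identity.

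\emph{The remaining subcase, $\lambda_1=\lambda_2=2$ with every other even cycle of length $2$.} This is the only place where the hypothesis ``some cycle has length at least $3$'' enters. The long cycle must be odd, of length $2\ell+3$, and by parity there is a third $2$-cycle. We take into the core three $2$-cycles and this odd cycle, and use the last identity. That identity outputs cycle structure $(2\ell+3,4,2)$ from factors of type $(2,2,2,2\ell+3)$: the glued $4$-cycle, the third $2$-cycle, and the odd cycle are all reproduced. The remainder then loses one $2$-cycle and one odd cycle relative to the odd $\sigma_1'$, so it is even, as required.

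The main obstacle is this bookkeeping: checking in each case that the identity's output cycle type is exactly the glued cycle plus the untouched core cycles, that both factors share one support, and that the leftover part is even. The edge cases to watch are $\ell=0$ or $k=0$ in the identities, and small remainders (support of size at most $4$), which are handled directly as in Case~1a.
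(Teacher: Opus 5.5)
Your proposal is correct and follows the paper's proof essentially verbatim. You use the same three-way split: the odd--odd case (the $(u,v,w)$ and $(u,v,2)$ lemmas, the fixed-point-plus-odd-cycle identity, and the two-fixed-point identities), the even--even case (the general even lemma, or the $(2^2,4+2\ell)$ identity when $\lambda_1=\lambda_2=2$), and the residual $2$--$2$ case via the identity with an auxiliary odd cycle, with the even remainder factored independently on its own support. Your explicit parity bookkeeping (why the remainder is even, and why a third $2$-cycle and an odd cycle of length at least $3$ exist in the last case) simply spells out what the paper's one-paragraph proof leaves implicit.
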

\begin{proof}
If $\lambda_1$ and $\lambda_2$ are odd, use the odd--odd constructions above. If they are even and are not both equal to $2$, use the general even construction. If both equal $2$, use either the construction with an auxiliary even cycle of length at least $4$, or, when all other even cycles have length $2$, the final construction with an auxiliary odd cycle. These cases exhaust the possibilities, and the remaining cycles are handled independently on their disjoint support.
\end{proof}
\subsection{Case 2b}\label{subsection:case2b}
Let us assume that $1$ and $2$ are contained in the same cycle of length  $\lambda_1$, where $\lambda_1$ is odd. We would like to ``cut'' a cycle of length $\lambda_1$ into the product of two cycles.

We use a strategy similar to that of the previous case. Let us start with the simplest case.
Let $\sigma$ be an odd permutation of cycle structure $(\lambda_1,\lambda_2, \ldots , \lambda_k)$. Let us assume for a moment that $\lambda_1=3$. Since $\sigma\in S_n\setminus A_n$, there is $2 \le i \le k$ such that $2 \mid \lambda_i$. For simplicity, let us first assume $\lambda_2=2$. Our purpose is to find a permutation $\eta$ that has the same cycle structure as $\sigma$ such that $\sigma \eta $ is of cycle structure $(1,2, 2,\lambda_3, \ldots, \lambda_k)$.

 Consider the following product.
\begin{equation}\label{eq:3,2->1,2,2}
\begin{split}
&(1,2,5)(3,4)\cdot (1,3,2)(4,5)=\left[(1,2,3,4,5)(3,5)\right]\cdot \left[(3,5)(2,1,3,4,5)\right]\\&=(1,2,3,4,5)(2,1,3,4,5)=(1)(2,4)(3,5).
\end{split}
\end{equation}
As we can see from this equation, we can write any permutation of cycle structure $(2,2,1)$  as the product of two permutations of cycle structure $(3,2)$. Equation \eqref{eq:3,2->1,2,2} also shows that this is equivalent to observing that the product of two cycles of length $5$ with exactly one element between 3 and 5 is a permutation of cycle structure $(2,2,1)$.
We generalize this observation by constructing cycles of length $2k+2\ell+2m+5$ whose product is a permutation of cycle structure $(2k+1,2m+2,2\ell+2)$ and the number of elements between 3 and 5 in both of these cycles is $2\ell+1$.

The following two cycles have length $2(k+\ell+m)+5$.
\begin{equation*}
    \begin{split}
\alpha = (1,x_1,\dots,x_{2k},2,z_1,\dots,z_{2m},3,y_1,\dots,y_{2\ell},4,5), \\
\beta = (2,x_1,\dots,x_{2k},1,3,4,y_1,\dots,y_{2\ell},5,z_1,\dots,z_{2m}).
\end{split}
\end{equation*}
A direct calculation gives
\begin{equation*}
    \begin{split}
\alpha\beta = &(1,x_2,x_4,\dots,x_{2k},x_1,x_3,\dots,x_{2k-1})(2,z_2,z_4,\dots,z_{2m},4,z_1,z_3,\dots,\\  &z_{2m-1})  (3,y_2,y_4,\dots,y_{2\ell},y_1,y_3,\dots,y_{2\ell-1},5).
\end{split}
\end{equation*}
As a corollary we obtain the following lemma.
\begin{lemma}
Let $S\subset S_n\setminus A_n$ be the conjugacy class of permutations of cycle structure $(2a+1,\lambda_2,\ldots,\lambda_s)$, where $a\ge1$. Then $S^2$ contains every permutation of cycle structure $(a_1,2a+1-a_1,\lambda_2,\ldots,\lambda_s)$.
\end{lemma}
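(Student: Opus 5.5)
Here $1\le a_1\le 2a$; exactly one of $a_1$ and $2a+1-a_1$ is odd, and we may assume $a_1=2k+1$ is the odd one, so $2a+1-a_1=2b$ with $b\ge1$ and $k\ge0$. Since $S\subseteq S_n\setminus A_n$ and $2a+1$ is odd, some $\lambda_i$ with $i\ge2$ is even; relabel so that $\lambda_2=2c$ with $c\ge1$.

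The plan is to realize the target locally on the union of the supports of the $(2a+1)$-cycle and the $2c$-cycle, which has $2a+2c+1$ points. There we want two permutations of cycle type $(2a+1,2c)$ whose product has cycle type $(2k+1,2b,2c)$. The remaining cycles $\lambda_3,\dots,\lambda_s$ form an even permutation $\rho$ on the complementary support. As in the proof of Case~1a, $\rho$ is a product of two permutations with the same cycle structure and support as $\rho$. That proof gets this from the argument of Theorem~\ref{Thm:Alternating} together with Lemma~\ref{Lem:GSCP}, with small supports checked directly. Multiplying the factorizations on disjoint supports gives two elements of $S$ whose product has the required cycle type. Since $S^2$ is normal, it then contains the whole conjugacy class.

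For the local step, use the displayed identity with $\alpha,\beta$, both $(2k+2\ell+2m+5)$-cycles, whose product $\alpha\beta$ has cycle type $(2k+1,2m+2,2\ell+2)$. Choose $m=b-1$ and $\ell=c-1$. Then $\alpha\beta$ has type $(2k+1,2b,2c)$, and $\alpha,\beta$ are $(2a+2c+1)$-cycles, because $2k+2(b-1)+2(c-1)+5=a_1+2b+2c-1=2a+2c+1$. Now apply the trick of \eqref{eq:3,2->1,2,2}. In both $\alpha$ and $\beta$ the points $3$ and $5$ are separated by $2\ell+1=2c-1$ points, namely $y_1,\dots,y_{2\ell},4$ in $\alpha$ and $4,y_1,\dots,y_{2\ell}$ in $\beta$. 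Put $\tau=(3,5)$. Multiplying a long cycle by the transposition of two of its points splits it into two cycles, by Lemma~\ref{lem:cycle structure}(a). The lengths are the number of points strictly between $3$ and $5$ plus one, that is $2c$, and the remainder $2a+1$. So $\alpha\tau$ and $\tau\beta$ both have cycle type $(2a+1,2c)$ on the same support, and
\[
(\alpha\tau)(\tau\beta)=\alpha\beta
\]
has cycle type $(2k+1,2b,2c)$. To apply Lemma~\ref{lem:cycle structure} to $\tau\beta=\tau\beta\tau\cdot\tau$, note that $\tau\beta\tau=\beta^\tau$ is again a cycle with the same spacing between $3$ and $5$. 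The orientation conventions for left versus right multiplication need only a routine check.

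The main obstacle is that bookkeeping: verifying that the split lengths really are $2c$ and $2a+1$ for both factors and not, say, $2c-1$ and $2a+2$. The intended spacing ``$2\ell+1$ elements between $3$ and $5$'' in the text is designed so that the cut yields the $2\ell+2=2c$ cycle, matching \eqref{eq:3,2->1,2,2} in the case $k=\ell=m=0$. If the count were off by one, I would re-index $\ell$, using the freedom to cut between a different pair of points such as $2$ and $4$. A secondary point is the degenerate case $a_1=2a+1$ or $a_1=0$, which is excluded since both parts must be positive. Finally, the lemma's target cycle type is symmetric in the two parts, so fixing which part is odd loses no generality.
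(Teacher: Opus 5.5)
Your proposal is correct and matches the paper's proof: you set $m=b-1$ and $\ell=c-1$ in the $\alpha,\beta$ construction, cut both long cycles with $\tau=(3,5)$ so that $\alpha\tau$ and $\tau\beta$ have cycle type $(2a+1,2c)$ while $(\alpha\tau)(\tau\beta)=\alpha\beta$ has type $(2k+1,2b,2c)$, and factor the remaining even permutation independently on the complementary support. The bookkeeping you were worried about checks out: in both $\alpha$ and $\beta$ there are $2\ell+1=2c-1$ points strictly between $3$ and $5$, so each cut produces cycles of lengths $2c$ and $2a+1$, with either multiplication convention.
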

\begin{proof}
One of $a_1$ and $2a+1-a_1$ is odd and the other is even. Write them as $2k+1$ and $2m+2$. Since the elements of $S$ are odd, one of the remaining cycles has even length, say $2\ell+2$. In the construction above, let $\tau=(3,5)$. Multiplication by $\tau$ cuts each of the long cycles $\alpha$ and $\beta$ into cycles of lengths $2k+2m+3=2a+1$ and $2\ell+2$. Thus $\alpha\tau$ and $\tau\beta$ have the original cycle structure, while $(\alpha\tau)(\tau\beta)=\alpha\beta$ has the required split cycle structure. The remaining cycles are factored independently on the complementary support.
\end{proof}
\subsection{Involutions having fixed points}\label{subsection:involutions with fixed points}
The preceding cases exclude every odd conjugacy class except certain classes of involutions. We now consider involutions with fixed points.
Let $S$ be the conjugacy class of triple transpositions, that is, the cycle structure of its elements is $(2^3,1^{n-6})$. Let us assume that $n \ge 8$.
\begin{itemize}
    \item $(1,2)(3,4)(5,6)\cdot (1,2)=(1,2)(3,5)(4,6)\cdot (1,2)(3,6)(4,5)$,
    \item $(1,2)(3,4)(5,6)\cdot (1,3)=(1,2,3,4)(5,6)=(1,4)(2,3)(7,8) \cdot (2,4)(5,6)(7,8)$,
    \item
    $(1,2)(3,4)(5,6)\cdot (7,8)=(1,3)(2,4)(5,6) \cdot (1,4)(2,3)(7,8)$,
\item
$(1,2)(3,4)(5,6)\cdot (6,7)=(1,2)(3,4)(5,7,6) =(1,3)(2,4)(5,6)\cdot (1,4)(2,3)(6,7)$.
\end{itemize}
These four calculations correspond respectively to the cases where the transposition $(1,2)$ is an edge of the matching, joins two matching edges, joins two fixed points, or joins a matched point to a fixed point. Thus $S(1,2)\subseteq S^2$.
\begin{prop}
Let $S$ be the conjugacy class of $S_n$ of cycle structure $(2^{2k+1},1^{n-4k-2})$. Assume that $n-4k-2\ge 2$. Then $S (1,2) \subset S^2$.
\end{prop}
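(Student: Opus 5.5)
The plan is to generalize the four local identities given above for triple transpositions. Since $S^2$ is normal, it suffices to fix one $\pi\in S$ and show $\pi(1,2)\in S^2$, according to how $1,2$ sit relative to the matching of $\pi$. By conjugation we reduce to four configurations: $(1,2)$ is a transposition of $\pi$; $1,2$ lie in two different transpositions of $\pi$; $1,2$ are both fixed points of $\pi$; one of $1,2$ is matched and the other is fixed. In each case we write $\pi(1,2)=\rho\,\pi'$. Here $\rho$ lives on a small support containing $1,2$ and the transpositions they touch, and $\pi'$ is a product of the remaining transpositions of $\pi$ together with fixed points.

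The key device is the same as in the earlier cases: factor locally and pad with the remaining transpositions. Suppose $\pi'$ contains $2j$ transpositions. We write it as $\pi'=\gamma\delta$ with $\gamma,\delta$ each consisting of $2j$ transpositions on the same support as $\pi'$. For example, $(a,b)(c,d)=[(a,c)(b,d)][(a,d)(b,c)]$, applied pairwise. Transpositions of $\pi'$ can thus be handled in pairs, each pair contributing a double transposition to both factors.

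Then $\rho$ must be written as a product of two involutions. Their numbers of transpositions, added to the contributions from $\pi'$, must each total $2k+1$, and their supports must be compatible so that fixed points are counted correctly. The four bulleted identities above provide exactly these local factorizations, using three transpositions together with the two extra points $7,8$.
\begin{itemize}
\item In the first case, $\rho=1$ on $\{1,\dots,6\}$, factored as $(1,2)(3,5)(4,6)\cdot(1,2)(3,6)(4,5)$.
\item In the second case, $\rho=(1,2,3,4)(5,6)$, factored as $(1,4)(2,3)(7,8)\cdot(2,4)(5,6)(7,8)$.
\item In the third case, $\rho=(1,2)(3,4)(5,6)(7,8)$.
\item In the fourth case, $\rho=(1,2)(3,4)(5,7,6)$.
\end{itemize}
In every case both local factors are triple transpositions. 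Concretely, we choose three transpositions of $\pi$ including those meeting $1$ or $2$, and two fixed points of $\pi$ (these exist since $n-4k-2\ge2$), playing the roles of $7,8$. Relabelling those points gives the local identity. The remaining $2k-2$ transpositions of $\pi$ form an even number, and they are split into pairs as above.

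The main obstacle is bookkeeping. Each factor must have exactly $2k+1$ transpositions, so its fixed-point count is automatically $n-4k-2$. The supports of the local part and of the padding must be disjoint, so that the two factors multiply cycle-by-cycle. When $1$ or $2$ is itself a fixed point of $\pi$, it uses up one of the spare fixed points. In the third and fourth configurations, the local identities use the points $7,8$ (respectively $7$ and $6$) as fixed points of $\pi$ that lie in the support of the product. We must check that two points fixed by $\pi$, including $1,2$ when relevant, are always available; this is precisely where $n-4k-2\ge2$ is used. For $k=0$ the class consists of transpositions and the statement is vacuous or excluded, so we may assume $k\ge1$; then $\pi$ has at least three transpositions and all four local identities apply.
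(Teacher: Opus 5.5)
Your proof is correct. It has the same skeleton as the paper's: reduce by conjugation to the four configurations, use the four local identities for triple transpositions (with two fixed points of $\pi$ playing the roles of $7,8$ in the second one), and pad with the remaining $2k-2$ transpositions of $\pi$.

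The one genuine difference is the padding, and there your version is the one that works. The paper appends the same $2k-2$ transpositions to both local factors and lets them cancel. But then the product is only the local product, which lacks the other $2k-2$ transpositions of $\pi(1,2)$. So for $k\ge 2$ it lies in the wrong conjugacy class. For instance, with $k=2$ in the first configuration it gives cycle type $(2^2,1^{n-4})$ instead of $(2^4,1^{n-8})$.

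Your pairwise identity $(a,b)(c,d)=(a,c)(b,d)\cdot(a,d)(b,c)$ fixes this. It gives each factor exactly $2k-2$ further transpositions, while the remaining transpositions of $\pi$ reappear in the product. This is the ``factor the remaining even part independently on the complementary support'' device the paper uses in its earlier cases, and it is what this proposition actually needs.

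A few small slips to correct:
\begin{itemize}
\item In the first configuration, $\rho=(3,4)(5,6)$ with $1,2$ fixed, not the identity on $\{1,\dots,6\}$. The factorization $(1,2)(3,5)(4,6)\cdot(1,2)(3,6)(4,5)$ that you quote does produce $(3,4)(5,6)$.
\item In the fourth identity only $7$ is a fixed point of $\pi$; the point $6$ belongs to the transposition $(5,6)$.
\item The case $k=0$ is neither vacuous nor excluded but trivial: $(1,2)\in S$ gives $S(1,2)\subseteq S^2$ at once.
\item The hypothesis $n-4k-2\ge 2$ is only genuinely used in the second configuration. In the third, $1$ and $2$ are themselves the two fixed points. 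The first uses no fixed point, and the fourth uses only the fixed point among $1,2$.
\end{itemize}
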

\begin{proof}
Apply one of the four local factorizations above and append the same $2k-2$ disjoint transpositions to both factors on the unused points. These added transpositions cancel in the product and give both factors the required cycle structure.
\end{proof}
As a corollary, we obtain the following result concerning conjugacy classes of $S_n$. In order to finish the proof of Theorem \ref{thm:symmetric} we will use representation theory to exclude the case when $n=2k$ and the conjugacy class is $\mathcal{C}_{(2^k)}$.
\begin{theorem}\label{thm:partial S_n}
Let $S \ne \{1\}$ be a conjugacy class of $S_n$ whose permutations are not of cycle structure $(2^{\lfloor n/2\rfloor},1^{\lceil n/2\rceil-\lfloor n/2\rfloor})$ or $(2^1,1^{n-2})$. Then $S$ is not a tile in $S_n$.
\end{theorem}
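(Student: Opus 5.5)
The plan is to assemble the results already proved in this section into a case analysis on the parity of the class $S$. I would first dispose of $n\le4$ as done at the start of this section. Then, for $n\ge5$, I would split according to whether $S\subseteq A_n$. If $S\subseteq A_n$, then $S$ is a normal subset of $A_n$ avoiding $1$, so Theorem~\ref{Thm:Alternating} shows that $S$ does not tile $A_n$. By the restriction argument given earlier, a tiling $(S,Y)$ of $S_n$ would restrict to a tiling $(S,Y\cap A_n)$ of $A_n$, so $S$ does not tile $S_n$ either.

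If $S\subseteq S_n\setminus A_n$ and $S$ is not the class of transpositions, I would apply Lemma~\ref{lem:GSCP for symmetric}. It then suffices to verify $S(1,2)\subseteq S^2$. Since $S^2$ is normal, this reduces to showing that every $S_n$-class meeting $S(1,2)$ lies in $S^2$. By Lemma~\ref{lem:cycle structure}, an element $\pi(1,2)$ with $\pi\in S$ arises in exactly one of Cases 1a, 1b, 2a, 2b, and the cycle structure of the product is determined as described there. I would then invoke the corresponding results:
\begin{enumerate}
\item the proposition of Subsection~\ref{subsection:case1a} (via Lemma~\ref{lem:gleason}) for cutting an even cycle;
\item the explicit identity of Subsection~\ref{subsection:case1b} for gluing two cycles of odd total length;
\item the proposition of Subsection~\ref{subsection:case2a} for gluing two cycles of even total length, together with the separate $\lambda_1=\lambda_2=1$ constructions;
\item the lemma of Subsection~\ref{subsection:case2b} for cutting an odd cycle.
\end{enumerate}
In each case the remaining cycles form an even permutation on a disjoint support and are factored independently.

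The main obstacle is bookkeeping the hypotheses of these constructions, and identifying exactly which odd classes escape them. The Case 2a proposition requires some cycle of length at least $3$. So I would check that a class failing this hypothesis consists of involutions, of type $(2^{2k+1},1^{n-4k-2})$ since it is odd. The Case 2b lemma needs an odd cycle of length at least $3$ to cut; fixed points are never cut, so it never fails. Case 1a with $\lambda_1=2$ (cutting a $2$-cycle into two fixed points) and Case 1b are unconditional. So the only odd classes not yet handled are the involutions $(2^{2k+1},1^{f})$, and for these I would appeal to the proposition of Subsection~\ref{subsection:involutions with fixed points} whenever $f\ge2$.

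The leftover odd involution classes are then those with $f\in\{0,1\}$ and $2k+1=\lfloor n/2\rfloor$, i.e.\ $(2^{\lfloor n/2\rfloor},1^{\lceil n/2\rceil-\lfloor n/2\rfloor})$, together with the transpositions. I also need to check that the involution proposition really covers every $k$. The stated hypothesis requires the four local factorizations to live inside $n$ points, and the second and third use the extra points $7,8$. For $k=0$ (transpositions) the class is excluded anyway. For $k\ge1$, the condition $f\ge2$ gives $n\ge 4k+4\ge8$, so these factorizations fit, and appending $2k-2$ further transpositions is possible as long as the points used are disjoint. I would verify this point count carefully, since it is where an off-by-one gap could hide. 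The even involution classes $(2^{2k},1^f)$ are already handled by the $A_n$ case. This yields the statement.
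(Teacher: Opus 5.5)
Your proposal is correct and follows the paper's own route. The theorem is obtained there as a corollary of exactly this assembly: the $n\le 4$ check, the restriction of even classes to Theorem~\ref{Thm:Alternating}, the reduction to $S(1,2)\subseteq S^2$ via Cases 1a--2b, and the involution proposition for $f\ge 2$. Your identification of the leftover classes also matches.

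On the step you flagged, your count $n\ge 4k+4$ is right, but the mechanism needs care. The $2k-2$ transpositions of $\pi$ outside the local window must be written as a product of two fixed-point-free involutions on their own $4k-4$ points, e.g.\ $(a,b)(c,d)=(a,c)(b,d)\cdot(a,d)(b,c)$, which is possible since $2k-2$ is even. They should not be appended identically to both factors, as the paper's wording suggests: identical appended transpositions cancel, so for $k\ge 2$ the product would be missing them.
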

\subsection{Excluding fixed-point-free involutions}\label{subsection:fixed-point-free-involutions}
In this subsection, we return to the representation-theoretic method used by Rothaus and Thompson \cite{rothausthompson} and also by Fang and Xia \cite{transpositions} to handle one of the remaining special cases. With Theorem \ref{thm:(2^k)} we solve one of the last cases in the proof of Theorem \ref{thm:symmetric}. The proof of our main result will be completed in the next subsection when we prove Theorem \ref{thm:(2^k,1^1)} as well.

Let $n=2k$ and let
$C=\mathcal{C}_{(2^k)}$.
We prove that $C$ does not tile $S_n$. By Theorem~\ref{Thm:Alternating}, it would be enough to consider when $k$ is odd. However, the argument below works for all $k\ge2$.
\begin{theorem}\label{thm:(2^k)}
Let $n=2k\ge4$ and let $C=\mathcal{C}_{(2^k)}$. Then $C$ is not a tile in $S_n$.
\end{theorem}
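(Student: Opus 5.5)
Suppose, for a contradiction, that $(C,T)$ is a tiling of $S_n$. Since $C$ is normal, it does not matter which side $T$ is on. Then $|T|=|S_n|/|C|=2^k k!$ and $\underline{C}\cdot\underline{T}=\underline{S_n}$ in $\mathbb{C}[S_n]$. The plan is to show that $T$ is forced to be equidistributed over the cosets of a suitable Young subgroup, and that this equidistribution fails for divisibility reasons.

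\textbf{Step 1: vanishing of $\rho^\lambda(\underline{T})$.} By the group-algebra consequence of Schur's lemma recalled in Section~\ref{sec:preliminaries}, for every $\lambda\ne(n)$ with $\chi^\lambda(c)\ne0$ ($c\in C$) we get $\rho^\lambda(\underline{T})=0$. By the Murnaghan--Nakayama rule together with Lemma~\ref{lem:2-core--BST}, $\chi^\lambda(c)\neq0$ exactly when $\lambda$ has empty $2$-core. Every two-row partition $(a,b)\vdash 2k$ has empty $2$-core. Indeed, horizontal dominoes can be stripped from the longer row until the rows have equal length or differ by one. A shape $(b,b)$ reduces by alternately removing horizontal dominoes from the second and first rows, as in $(3,3)\supset(3,1)\supset(1,1)\supset\varnothing$. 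Hence $\rho^\lambda(\underline T)=0$ for every $\lambda=(a,b)$ with $a>k$ or $a=b=k$, excluding $\lambda=(n)$.

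\textbf{Step 2: equidistribution.} Let $H=S_k\times S_k$ be the Young subgroup with $\alpha=(k,k)$, and let $M$ be the permutation module on the cosets $H\backslash S_n$, which we identify with the $k$-subsets of $[n]$. By Young's rule, $M\cong\bigoplus_{\lambda\trianglerighteq(k,k)}K_{\lambda,(k,k)}\rho^\lambda$, and every $\lambda\trianglerighteq(k,k)$ has at most two rows. Step~1 therefore shows that $\underline T$ acts on $M$ as zero on all nontrivial constituents. On the trivial constituent it acts as the scalar $|T|$, so the matrix of $\underline T$ on $M$ equals $\frac{|T|}{\binom{2k}{k}}J$, where $J$ is the all-ones matrix. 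Reading off the entry indexed by $(A,B)$ shows that the number of $t\in T$ with $A^t=B$ equals $|T|/\binom{2k}{k}$. This number must be an integer.

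\textbf{Step 3: arithmetic contradiction.} The quotient is
\[
\frac{2^k k!}{\binom{2k}{k}}=\frac{2^k (k!)^3}{(2k)!}.
\]
By Bertrand's postulate, for $k\ge2$ there is a prime $p$ with $k<p<2k$; for $k=2$ take $p=3$. Such a $p$ is odd and divides $(2k)!$, but it does not divide $2^k(k!)^3$. Hence the quotient is not an integer, which is the desired contradiction. I expect the main point requiring care to be Step~2: the passage from the vanishing of $\rho^\lambda(\underline T)$ on constituents to the statement that the matrix is a scalar multiple of $J$. This needs the constituent decomposition of $M$ (all constituents two-row, and the trivial one occurring once) and the fact that $\underline T$ acts on the trivial constituent by $|T|$. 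The $2$-core verification in Step~1 is routine.
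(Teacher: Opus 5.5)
Your proposal is correct and follows the paper's proof essentially step for step: nonvanishing of $\chi^\lambda$ on $C$ for every two-row $\lambda$ (empty $2$-core), Young's rule for $H=S_k\times S_k$ forcing $\sum_{t\in T}P_t=\frac{|T|}{\binom{2k}{k}}J$, and Bertrand's postulate to show this quotient is not an integer. Two cosmetic points: since $a+b=2k$, the rows of $(a,b)$ always have the same parity, so the ``differ by one'' alternative never arises; and for odd $b$ the last step $(1,1)\supset\varnothing$ removes a vertical domino, not a horizontal one.
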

Notice that the case $k=3$ also follows from the existence of an outer automorphism of $S_6$, which interchanges the conjugacy classes of transpositions and triple transpositions. Since the image of a tile under a group automorphism is again a tile, non-tiling of the transposition conjugacy class in $S_6$, a case which is known \cite{transpositions}, implies non-tiling of $\mathcal{C}_{(2^3)}$.

\begin{proof}
Let $c\in C$ and  $\lambda\vdash n$, let $\chi^\lambda$ denote the corresponding irreducible character of $S_n$. We first determine when $\chi^\lambda(c)$ is nonzero. The Murnaghan--Nakayama rule \cite{reprtheoryofsymmetric} gives
\begin{equation}\label{eq:murnaghan-nakayama}
\chi^\lambda(c)=\sum_{T\in BST(\lambda,(2^k))}(-1)^{\operatorname{ht}(T)},
\end{equation}
where $BST(\lambda,(2^k))$ is the set of border-strip tableaux of shape $\lambda$ and type $(2^k)$, see \cite{James}. %Border-strip tableaux are basically a sequence of nested Young diagrams where each step adds a valid border strip, which is a sequence of connected cells on the boundary of the tableau, see \cite{James}.

Every border strip occurring here is a domino of size $1\times 2$ or $2\times 1$. A horizontal domino contributes zero to the height, whereas a vertical domino contributes one. The parity of the number of vertical dominoes is independent of the chosen domino tableaux. In each row, the parity of the number of boxes belonging to vertical dominoes, is determined by the row length. More precisely the parity of the vertical dominoes whose top cell intersects the first row depends on the parity of the length of the first row and then the same quantity can recursively be calculated for the remaining rows.
Hence we may sum over the rows to determine the parity of the total number of vertical dominoes. Consequently, all terms in~\eqref{eq:murnaghan-nakayama} have the same sign.

The same argument applies to cycle type $(2^k,1)$. Fix the order $2,\ldots,2,1$ of the border-strip lengths. The final border strip of size one is necessarily the unique box of the partition $(1)$. As above, the parity of the number of vertical dominoes is independent of the chosen domino tableau, and hence all terms in the Murnaghan--Nakayama formula have the same sign. Consequently,
\[
\chi^\lambda(c)\ne 0
\quad\Longleftrightarrow\quad
BST(\lambda,(2^k,1))\ne\emptyset.
\]
Equivalently, this holds precisely when the \(2\)-core of \(\lambda\) is \((1)\).

We obtain the following  lemma.
\begin{lemma}\label{lem:2-core--BST}
Let $c$ be a permutation in $\mathcal{C}_{(2^k)}$ or
$\mathcal{C}_{(2^k,1)}$. Then
\[
\chi^\lambda(c) \ne 0
\]
if and only if
\[
BST(\lambda,(2^k)) \ne \emptyset
\quad\text{or}\quad
BST(\lambda,(2^k,1)) \ne \emptyset,
\]
respectively. Equivalently, $\chi^\lambda(c)\ne 0$ precisely when
the $2$-core of $\lambda$ is empty or $(1)$, respectively.
\end{lemma}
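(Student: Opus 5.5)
The plan is to apply the Murnaghan--Nakayama rule with the cycle type written in the order $(2,\ldots,2)$ or $(2,\ldots,2,1)$. The aim is to show that every border-strip tableau contributes the same sign. Once that is known, there is no cancellation in the sum, so $\chi^\lambda(c)\ne 0$ exactly when the relevant set of border-strip tableaux is nonempty. Since each border strip is a domino, a domino tableau of shape $\lambda$ of type $(2^k)$ is the same thing as a tiling of $D(\lambda)$ by dominoes that can be removed successively.

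The key step is that the parity of the number $v(T)$ of vertical dominoes depends only on $\lambda$. I would prove this by a row-parity count, processing rows from the top. Each vertical domino occupies one cell in each of two consecutive rows $i,i+1$, and horizontal dominoes occupy two cells in a single row. Let $a_i$ be the number of vertical dominoes whose top cell lies in row $i$. Counting the cells of row $i$ gives
\[
\lambda_i = 2h_i + a_{i-1} + a_i .
\]
Hence $a_i\equiv \lambda_i+a_{i-1}\pmod 2$ with $a_0=0$, so every $a_i \bmod 2$ is determined by $\lambda$. Summing, $v(T)=\sum_i a_i$ has parity determined by $\lambda$. This shows all terms $(-1)^{\operatorname{ht}(T)}=(-1)^{v(T)}$ in \eqref{eq:murnaghan-nakayama} are equal.

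For the type $(2^k,1)$ I would fix the order $2,\ldots,2,1$, which the Murnaghan--Nakayama rule permits since the character value does not depend on the order of the parts. The last strip is then the single box of $(1)$ and has height $0$. The remaining dominoes tile $D(\lambda)\setminus\{(1,1)\}$. The same recursion applies with row $1$ having effective length $\lambda_1-1$, so again the parity is determined by $\lambda$.

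For the equivalence with $2$-cores I would use the standard fact that the $2$-core is independent of the order of domino removals (see \cite{James}). Granting that, $BST(\lambda,(2^k))\neq\emptyset$ means one can strip $\lambda$ to $\varnothing$ by dominoes, which is exactly saying the $2$-core is empty. In the odd case, $BST(\lambda,(2^k,1))\ne\emptyset$ means one can strip $\lambda$ down to $(1)$ by dominoes. Since $(1)$ admits no domino removal, this means the $2$-core is $(1)$. The main point requiring care is this order-independence of the $2$-core. I expect to cite it rather than reprove it, via the abacus / $2$-quotient description. If a self-contained argument is needed, I would use the equivalent characterization that removing a domino corresponds to moving a bead one step down its runner on a $2$-runner abacus, so the terminal configuration is unique.
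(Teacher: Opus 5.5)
Your proof is correct and follows essentially the same route as the paper. The paper also fixes the parity of the number of vertical dominoes by a row-by-row recursion (your identity $\lambda_i=2h_i+a_{i-1}+a_i$ makes its terse description explicit), uses the order $2,\ldots,2,1$ so that the last strip is the box of $(1)$, and treats uniqueness of the $2$-core as a standard fact, which it records with a single ``Equivalently'' and which you spell out more carefully.
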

%Using previously defined objects, we can just say that the $2$-core of a Young diagram is the unique residual partition obtained by repeatedly removing 2-box rim hooks -which are just dominos such that removing them still result in a Young diagram - on the boundary until no more can be removed, see \cite{James}.

Suppose, for a contradiction, that $(C,T)$ is a tiling of $S_n$. In the group algebra $\mathbb{C}[S_n]$, this means
\begin{equation}\label{eq:group algebra}
\underline{C} \cdot \underline{T} =\underline{S_n}
%\left(\sum_{c\in C}c\right)\left(\sum_{t\in T}t\right)=\sum_{g\in %S_n}g.
\end{equation}
Let
$\rho^\lambda$ be a nontrivial irreducible representation of $S_n$ corresponding to the partition $\lambda \vdash n$ and again let $\chi^\lambda$ be the corresponding character. %Clearly, $\rho_\lambda$  vanishes on the right hand-side side of equation \eqref{eq:group algebra} if $\rho_\lambda$ is not the trivial representation.
%Since $C$ is a conjugacy class, Schur's lemma gives
%\[
%\rho_\lambda\left(\sum_{c\in C}c\right)
% =\frac{|C|\chi^\lambda(c)}{\chi^\lambda(1)}I,
%\]
%where $I$ is the identity matrix.
%Therefore, it follows from equation \eqref{eq:group algebra} that whenever
As we have seen earlier, whenever $\chi^\lambda(c) \ne 0$, then $\rho_\lambda(\sum_{c\in \mathcal{C}}c)$ is invertible so we have
$\sum_{t\in T}\rho_\lambda(t)=0$.

Now let $H=S_k\times S_k$ be the Young subgroup of $S_n$.
%and let $1_H$ denote the trivial representation of $H$. Let us denote the induced representation of $1_H$ to $S_n$ by $1_H^{S_n}$.
By Young's rule,
\[
1_{H}^{S_n}
 =\sum_{\mu\vdash 2k}K_{\mu,(k^2)} \cdot \chi^\mu.
 \]
%where $K_{\mu,\lambda}$ denotes the Kostka number corresponding to this pair of partitions of $n$.
The Kostka number
$K_{\mu,\lambda}$ is nonzero if and only if $\mu$ dominates $\lambda$ in the dominance order of partitions which we define below.

%Let $p = (p_1,p_2,\ldots ,p_a)$ and $q = (q_1,q_2,\ldots q_b)$ be partitions of $n$. In this case $q$ dominates $p$, which is denoted by $p\trianglelefteq q$ if and only $ p_{1}+\cdots +p_{k}\leq q_{1}+\cdots +q_{k}$ for all $k\geq 1$.

It is easy to see that the partitions dominating $(k^2)$ are precisely the two-row partitions $(2k-i,i)$, $0\le i\le k$ since the first coordinate has to be at least $k$ and the sum of the first two is at least and then equal to $2k=n$.
We claim that every partition $(2k-i,i)\vdash n=2k$ has empty $2$-core. To see this, let us first note that its two row lengths have the same parity. If they are both even, both rows can be tiled with horizontal tiles only.
If both row lengths are odd, we may remove horizontal dominoes from the right ends of the two rows until only the first box of each row remains. These two boxes form a vertical domino, which can then be removed last.
 Thus $\chi^{(2k-i,i)}(c) \ne 0$ for $c\in C$ and for every $i$.

The induced representation of the trivial representation of a subgroup $H$ is the permutation representation of $S_n$ on the cosets of $H$. Let $P_g$ denote the corresponding permutation matrix, which is an element of $\mathbb{C}^{\frac{|G|}{|H|} \times \frac{|G|}{|H|}}$. The preceding vanishing statement shows that $\sum_{t\in T}P_t$ is the zero matrix on every nontrivial irreducible constituent of this permutation representation. On the one-dimensional trivial constituent it acts as multiplication by $|T|$. Therefore
\[
\sum_{t\in T}P_t
 =\frac{|T|}{[S_n:H]}J,
\]
where $J$ is the all-ones matrix. Since the left-hand side has integer entries, we must have that
$\frac{|T|}{[S_n:H]}$ is a positive integer.
Now
\[
|C|=\frac{(2k)!}{2^k k!} \mbox{ and then}
\qquad
|T|=\frac{(2k)!}{|C|}=2^k k!
\qquad \text{and }
[S_n:H]=\binom{2k}{k}.
\]
Hence $\binom{2k}{k}$ must divide $2^k k!$. By Bertrand's postulate, there is a prime $p$ with $k<p<2k$. This prime divides $\binom{2k}{k}$, but it divides neither $k!$ nor $2^k$, a contradiction.
\end{proof}

\subsection{Excluding involutions with one fixed point}\label{subsection:one-fixed-point}
Now we combine our elementary technique with some ideas from representation theory.
\begin{theorem}\label{thm:(2^k,1^1)}
Let $n=2k+1 \ge 5$, where $k$ is odd, and let $C=\mathcal{C}_{(2^k,1)}$. Then $C$ is not a tile in $S_n$.
\end{theorem}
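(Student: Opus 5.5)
Suppose, for a contradiction, that $(C,T)$ is a tiling of $S_n$. As usual we normalise so that $1\in T$, and note that $|T|=n!/|C|=2^k k!$. The argument has two ingredients: a representation-theoretic one, which constrains the fixed-point statistics of $T$, and an elementary one in the spirit of Lemma~\ref{Lem:NecessaryCondition}, which restricts which conjugacy classes $T$ can meet. I would play these off against each other.

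\emph{Representation-theoretic step.} By Lemma~\ref{lem:2-core--BST}, $\chi^\lambda(c)\neq0$ for $c\in C$ exactly when the $2$-core of $\lambda$ is $(1)$. For every such nontrivial $\lambda$ we then get $\sum_{t\in T}\rho^\lambda(t)=0$, and taking traces gives $\sum_{t\in T}\chi^\lambda(t)=0$. I would check that $(n-1,1)$, $(n-2,2)$ and $(n-2,1,1)$ all have $2$-core $(1)$; each reduces to $(1)$ by removing one vertical or horizontal domino and then horizontal dominoes. Expressing these characters through $\mathrm{fix}(t)$ and the number $c_2(t)$ of $2$-cycles of $t$ yields explicit moment identities. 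For instance, from $\chi^{(n-1,1)}$ we get
\[
\sum_{t\in T}\mathrm{fix}(t)=|T|,
\]
and from the other two characters we get $\sum_t \mathrm{fix}(t)^2=2|T|$ and $\sum_t c_2(t)=|T|/2$, the values for a uniformly random permutation. So on average the elements of $T$ have exactly one fixed point, and the variance of $\mathrm{fix}$ is pinned down.

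\emph{Elementary step.} Uniqueness of the tiling gives $(C^{-1}C)\cap(TT^{-1})=\{1\}$, and since $C=C^{-1}$ and $1\in T$, every $t\in T\setminus\{1\}$ lies outside $C^2$. I expect the lemma announced in the outline, Lemma~\ref{lem:ket konjugalt osztaly negyezete}, to describe $C^2$ for this class. Using products of two involutions on matched supports, as in Subsection~\ref{subsection:involutions with fixed points}, one gets that $C^2$ contains every even permutation whose cycle type arises from two fixed-point-free-up-to-one involutions. This includes every even permutation in which cycles of each length come in pairs, with at most one exceptional fixed point, and a great deal more. Combined with a version of Lemma~\ref{lem:GSCP for symmetric} for an odd element outside $C$ (for example a transposition $(1,2)$), writing $(1,2)=sy$ also excludes from $T\setminus\{1\}$ the classes meeting $C(1,2)\cap C^2$.

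\emph{Conclusion.} I would aim to show that every nonidentity $t\in T$ then lies in a restricted family of classes. The expectation is that $T$ can meet only classes with at most one fixed point, apart from very few exceptions controlled by the hypothesis that $k$ is odd, since then elements of $C$ are odd and $T$ must contain odd permutations. This is incompatible with the moment identities: if $\mathrm{fix}(t)\le1$ for all $t\ne1$, then $\sum_t\mathrm{fix}(t)\le |T|-1+n$ while $\sum_t\mathrm{fix}(t)^2=2|T|$ forces many elements with at least two fixed points. The main obstacle I anticipate is the precise determination of $C^2$ and of $C(1,2)$ in the elementary step, that is, proving that enough classes with at least two fixed points are forbidden to $T\setminus\{1\}$. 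If that set of forbidden classes is too small, my fallback is to use further characters with $2$-core $(1)$, for example $(n-3,3)$ or $(n-3,2,1)$ where applicable, to obtain more moment equations and sharpen the contradiction.
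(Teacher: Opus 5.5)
There is a genuine gap, and it starts with a concrete error in your representation-theoretic step. The partition $(n-1,1)=(2k,1)$ does \emph{not} have $2$-core $(1)$. The two cells of its first column are not a removable domino, so stripping horizontal dominoes from the first row stops at $(2,1)$, which is already a $2$-core. Equivalently, $\chi^{(n-1,1)}(c)=\mathrm{fix}(c)-1=0$ on $C$, so $\rho^{(n-1,1)}(\underline{C})=0$ and the tiling equation puts no constraint on $\rho^{(n-1,1)}(\underline{T})$. Hence neither $\sum_{t\in T}\mathrm{fix}(t)=|T|$ nor $\sum_{t\in T}\mathrm{fix}(t)^2=2|T|$ is available.

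The characters $\chi^{(n-2,2)}$ and $\chi^{(n-2,1,1)}$ take the values $k-1$ and $-k$ on $C$, so they do give valid identities. Together, however, they say only that $\sum_t c_2(t)=|T|/2$ and $\sum_t(\mathrm{fix}(t)-1)(\mathrm{fix}(t)-2)=|T|$. If every $t\ne1$ had $\mathrm{fix}(t)\le1$, the second identity would merely say that exactly $(|T|-(n-1)(n-2))/2$ elements of $T$ are fixed-point-free, which is no contradiction. Your final inequality argument works only because of the false first-moment identity; with it, the argument amounts to $|T|=n^2-n$.

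The conclusion step is also an expectation rather than an argument, and the expected restriction has no basis. With $1\in T$, uniqueness only gives $(T\setminus\{1\})\cap C^2=\emptyset$. Since $C^2\subseteq A_n$, this says nothing about the odd elements of $T$, which can have many fixed points. Writing $(1,2)=sy$ does not exclude any class either; it forces $y\in C(1,2)\setminus C^2=\mathcal{C}_{(2^{k-2},4,1)}$ to lie in $T$.

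What is missing is a normal subset of $C^2$ that $T$ is \emph{forced} to meet. The paper obtains one as follows.
\begin{itemize}
\item Take $D=\mathcal{C}_{(2^{k-2},4,1)}$. Any $\lambda$ with $\chi^\lambda(d)\ne0$ admits a border-strip tableau of type $(2^{k-2},4,1)$. Splitting the $4$-hook into two dominoes gives one of type $(2^k,1)$, so $\chi^\lambda(c)\ne0$. Hence $(D,T)$ is a $k(k-1)$-fold tiling.
\item Multiply $\underline{C}\,\underline{T}=\underline{S_n}$ by the class sum $\underline{X}$ of transpositions. With $A=\mathcal{C}_{(2^{k-1},3)}$ and $B=\mathcal{C}_{(2^{k-1},1^3)}$, one has $\underline{C}\,\underline{X}=3\underline{A}+3\underline{B}+2\underline{D}$, and this yields $(\underline{A}+\underline{B})\underline{T}=k\,\underline{S_n}$.
\item In your trace language this says $|T\cap(A\cup B)|=k$. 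So the right ``moment'' is the indicator of $A\cup B$, not fixed-point statistics.
\item Since $k-1$ is even, in every element of $A\cup B$ each cycle length has even multiplicity except the single odd length ($3$ for $A$, $1$ for $B$). By the description of $\mathcal{C}_{(2^k,1)}^2$, this gives $A\cup B\subseteq C^2$, contradicting $(T\setminus\{1\})\cap C^2=\emptyset$.
\end{itemize}
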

\begin{proof}
    Let $D=\mathcal{C}_{(2^{k-2},4,1)}$. The main ingredient from representation theory is the following observation. Let us assume $(C,T)$ is a tiling for some $T \subset S_n$. We claim that in this case $(D,T)$ is a $\frac{|D|}{|C|}$-fold tiling.

        Since $C$ and $D$ are both conjugacy classes, their class sum act as scalar matrices in every irreducible representation. Thus, it is enough to verify that whenever $\chi(C)=0$, then $\chi(D)=0$. We show this by contrapositive, so let $\lambda \vdash n$ such that $\chi^{\lambda}(d) \ne 0$ for $d\in D$. It follows from the Murnaghan-Nakayama rule that $BST(\lambda,(2^{k-2},4,1))$ is nonempty.
We now show that $ BST(\lambda,(2^k,1))\neq\varnothing$.

We have already established that
$BST(\lambda,(2^{k-2},4,1))\ne \emptyset$, so it suffices to show
that the removal of a rim hook of length $4$ can be replaced by two
successive domino removals.

Let $R=\alpha \setminus \beta$ be the rim hook removed at the corresponding
stage, and list its cells as $b_1,b_2,b_3,b_4$ from northeast to
southwest, so that each successive step is either to the left or
downward. The order in which we remove the two dominoes
$\{b_1,b_2\}$ and $\{b_3,b_4\}$ is determined by the direction of
the middle step, from $b_2$ to $b_3$.

Suppose first that this middle step is downward, with $b_2$ in
row $r$ and $b_3$ in row $r+1$. Then $b_3,b_4$ are precisely the
cells of $R$ below row $r$. Removing these two cells from $\alpha$
therefore gives a diagram with row lengths
\[
\nu_i=
\begin{cases}
\alpha_i, &\mbox{if }~ i\le r,\\
\beta_i, & \mbox{if }~i>r.
\end{cases}
\]
These row lengths are weakly decreasing within each of the two
ranges, since $\alpha$ and $\beta$ are partitions. At the boundary
between the ranges, we have
\[
\nu_r=\alpha_r\ge\beta_r\ge\beta_{r+1}=\nu_{r+1}.
\]
Thus $\nu$ is also a partition. Consequently,
$\alpha/\nu=\{b_3,b_4\}$ is a removable domino, and
then $\nu/\beta=\{b_1,b_2\}$ is a removable domino in the resulting
diagram.

Notice that a diagram represents a partition if and only if its transpose is a partition.
If the middle step is to the left, we may apply the preceding argument
to the transposed diagrams. The cells of the transposed rim hook,
listed from northeast to southwest, are
$b_4^{\mathsf t},\ b_3^{\mathsf t},\ b_2^{\mathsf t},\ b_1^{\mathsf t},
$
and its middle step is downward. By the preceding case, we can first remove
$\{b_1^{\mathsf t},b_2^{\mathsf t}\}$ from the transposed diagram
and then $\{b_3^{\mathsf t},b_4^{\mathsf t}\}$, with a partition
remaining after each removal. Transposing back, we see that
$\{b_1,b_2\}$ can be removed first, followed by $\{b_3,b_4\}$,
again leaving a partition after each removal.

In either case, replacing the original rim-hook removal by these
two domino removals, while keeping all other steps unchanged,
gives an element of $BST(\lambda,(2^k,1))$, as required.
Thus
\[
BST(\lambda,(2^k,1))\ne\emptyset,
\]
and hence, by Lemma~\ref{lem:2-core--BST},
\[
\chi^\lambda(c)\ne0.
\]

As a result of this argument we indeed obtain that whenever $\chi^{\lambda}(d) \ne 0$ for some $d\in D$, then $\chi^{\lambda}(c) \ne 0$ which proves our claim.

 Let $A=\mathcal{C}_{(2^{k-1},3^1)}$ and $B=\mathcal{C}_{(2^{k-1},1^3)}$. The fact that $(C,T)$ is a tiling implies that
 $$\underline{C} \cdot \underline{T}=\underline{S_n}.
 $$
 Let us now multiply this equation from the left by $\underline{X}$, where $X=\mathcal{C}_{(2^1,1^{2k-1})}$.
 Sum of normal sets are contained in the center of the group algebra so we can write it as
$$(\underline{C} \cdot \underline{X}) \cdot
\underline{T}=  \underline{X} \cdot\underline{S_n} =|X| \underline{S_n}.
 $$
By Lemma \ref{lem:cycle structure}, $CX$ as a set is equal to $A\cup B\cup D$.
Easy calculations show that $\underline{C} \cdot \underline{X}=3\underline{A}+3 \underline{B}+2\underline{D}$ so if we multiply this by $\underline{T}$, then using  $\underline{D} \cdot \underline{T}=\frac{|D|}{|C|} \underline{S_n}=k(k-1)\underline{S_n}$ we obtain
\begin{equation*}\label{eq:groupalgebra1}
3(\underline{A}+\underline{B})\cdot \underline{T}+2 k (k-1) \underline{S_n}=\binom{2k+1}{2}\underline{S_n}.
\end{equation*}
This gives $$(\underline{A}+\underline{B})\underline{T}=k\underline{S_n}, $$
which means that $(A\cup B,T)$ is a $k$-fold tiling of $S_n$. In particular, we obtain the fact that $|A|+|B|=k|C|$ which is of course easy to check.

Again, we may assume $1\in T$ and since $(A\cup B,T)$ is a $k$-fold tiling of $S_n$ we have $1=st$ for some $t \in T$ and $s\in A$ or $s \in B$. Thus $T^{-1}T$ contains an element of $A$ or $B$.
Sands's criterion implies that $C^{-1}C \cap T^{-1}T =\{1\}$ since $(C,T)$ is a tiling. Thus in order to obtain a contradiction, it is enough to prove $A \cup B \subset C^{-1}C=C^2$. The cycle structure of all permutations in $A$ and $B$ contains only 1 or 3 odd cycles and the rest of the cycles are transpositions. Since $A$ and $B$ are contained in $A_n$, their elements contain an even number of transpositions. Thus it follows from Lemma~\ref{lem:ket konjugalt osztaly negyezete}(\ref{item:U negyzete}) that $A \subset C^2$ and $B \subset C^2$. Elementary calculations from earlier parts of this paper also show these inclusions, and hence we obtain a contradiction.
\end{proof}
\section{Concluding remarks}\label{subsection: limitation of techniques}
The main aim of this section is to explain why our elementary method does not handle the two cases treated in Subsections~\ref{subsection:fixed-point-free-involutions} and~\ref{subsection:one-fixed-point}, and why representation theory is needed. We also discuss possible extensions and prove Lemma~\ref{lem:ket konjugalt osztaly negyezete}, which was used in the preceding section.

Let $n=2k$ with $k$ odd, and let $S$ be the conjugacy class of permutations of cycle type $(2^k)$.
The main result of this subsection is Proposition~\ref{prop:NSGSCP}. It shows that a different technique is needed to prove non-tiling for this family of conjugacy classes. The situation is similar for the conjugacy class of cycle type $(2^k,1)$ when $k$ is odd.
\begin{prop}\label{prop:NSGSCP}
If $Sg \subset S^2$, then $g \in S$ or $g=1$.
\end{prop}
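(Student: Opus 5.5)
The plan is to first characterise $S^2$ and then show that for every $g\notin S\cup\{1\}$ some $s\in S$ has $sg\notin S^2$. Throughout, $n=2k$ with $k$ odd.

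\textbf{Step 1: describe $S^2$.} For $s_1,s_2\in S$, the two perfect matchings $s_1$ and $s_2$ together form a union of alternating cycles. An alternating cycle of length $2m$ contributes two $m$-cycles to $s_1s_2$ (two fixed points when $m=1$). Hence every element of $S^2$ has a cycle type in which each part occurs with even multiplicity. I only need this necessary condition, not the converse.

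\textbf{Step 2: parity.} Since $k$ is odd, $S$ consists of odd permutations, so $S^2\subseteq A_n$. If $g$ were even and $Sg\subseteq S^2$, then $Sg$ would consist of odd permutations inside $A_n$. This is impossible, because $Sg$ is nonempty. So we may assume $g$ is odd and $g\notin S$. It remains to produce $s\in S$ such that $sg$ has some cycle length occurring an odd number of times.

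\textbf{Step 3: $g$ has a cycle of length at least $3$.} With the right action, $i^{sg}=i$ holds exactly when $i^s=i^{g^{-1}}$. So fixed points of $sg$ come from pairs of $s$ of the form $\{j,j^{g^{-1}}\}$, i.e.\ edges of the functional graph of $g$ used by the matching $s$. Such a pair gives two fixed points of $sg$ if $\{j,j^{g^{-1}}\}$ is a $2$-cycle of $g$, and exactly one fixed point if it lies on a cycle of length at least $3$.
\begin{itemize}
\item Choose one edge $\{j,j^{g^{-1}}\}$ on a cycle of length at least $3$ and put it into $s$.
\item Complete $s$ by a perfect matching of the remaining $2k-2$ points that uses no edge of the cycle graph of $g$.
\end{itemize}
Then $sg$ has exactly one fixed point, so the part $1$ has multiplicity one and $sg\notin S^2$.

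The completion step is the main obstacle. The forbidden graph has maximum degree $2$, so its complement on $2k-2$ points is very dense, and a perfect matching exists by Dirac's theorem once $2k-2\ge 6$, say. The small cases $n=6$, and $n=2$, where the statement is vacuous, I would check by hand or settle with a direct explicit matching that alternates across different cycles of $g$.

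\textbf{Step 4: $g$ has no cycle of length at least $3$.} Then $g$ is an involution with $t$ transpositions, where $t$ is odd because $g$ is odd. Since $g\notin S\cup\{1\}$, $g$ has $f=2k-2t\ge 2$ fixed points. Build $s$ as follows:
\begin{itemize}
\item include $t-1$ of the transpositions of $g$, contributing $2(t-1)$ fixed points to $sg$;
\item for the remaining transposition $(a,b)$ of $g$, pick two fixed points $c,d$ of $g$ and pair $a$ with $c$ and $b$ with $d$;
\item match the remaining $f-2$ fixed points of $g$ among themselves, contributing $(f-2)/2$ transpositions to $sg$.
\end{itemize}
Tracing points gives $a\mapsto c\mapsto b\mapsto d\mapsto a$, so $sg$ contains exactly one $4$-cycle. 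The part $4$ therefore has multiplicity one and $sg\notin S^2$.

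Combining Steps 2--4, $Sg\subseteq S^2$ forces $g\in S$ or $g=1$.
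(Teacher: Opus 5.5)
Your proof is correct, and it takes a different route from the paper's. Like you, the paper only uses the necessary direction of the description of $S^2$ (every cycle length occurs an even number of times). It then proceeds by a case analysis on the cycle type of $g$. It first rules out cycles of length at least $5$, then two $3$-cycles, two $4$-cycles, a $4$-cycle together with a $3$-cycle, a $4$-cycle with fixed points, a $3$-cycle with a transposition and a fixed point, and finally involutions. In each case it writes down two local matchings on a few points and keeps the matching on the remaining points fixed and otherwise arbitrary. The two products then differ by exactly one in the number of fixed points, or in the parity of the number of $4$- or $6$-cycles, so one of them lies outside $S^2$.

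Your key observation is that the fixed points of $sg$ correspond exactly to the edges of $s$ lying in the cycle graph of $g$: one fixed point for an edge on a cycle of length at least $3$, two for a $2$-cycle. This lets you handle every $g$ with a cycle of length at least $3$ in one stroke, by forcing $sg$ to have exactly one fixed point. The price is a global existence step: a perfect matching avoiding a graph of maximum degree $2$, which Dirac's theorem gives for $n\ge 10$. The paper's ``two variants with the same outside matching'' trick needs no global argument, but pays with many more cases.

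The $n=6$ check you deferred does go through, though it is not automatic, since the complement of a triangle plus an isolated vertex has no perfect matching. The odd $g\in S_6$ with a cycle of length at least $3$ have types $(3,2,1)$, $(4,1,1)$ and $(6)$. In each case, once the chosen edge is removed from the long cycle, no triangle remains, and a suitable matching of the other four points is immediate.

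Your involution step is essentially the paper's final case. You arrange directly that $sg$ has exactly one $4$-cycle, rather than comparing two products.
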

To prove Proposition~\ref{prop:NSGSCP}, we first describe the elements of $S^2$.
\begin{lemma}\label{lem:ket konjugalt osztaly negyezete}
\begin{enumerate}[\rm(a)]
    \item\label{item:S negyzete} Let $S=\mathcal{C}_{(2^k)} \subset S_{2k}$. Then $g\in S^2$ if and only if the number of $\ell$-cycles in $g$ is even for every $ \ell \le 2k$.
\item\label{item:U negyzete}
Let $U=\mathcal{C}_{(2^k,1^1)} \subset S_{2k+1}$. Then   $g\in U^2$ if and only if the number of
$\ell$-cycles in
$g$ is even for every $\ell \le 2k+1$ except for exactly one odd value of $\ell$.
\end{enumerate}
\end{lemma}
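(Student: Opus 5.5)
The plan is to reduce both statements to the classical structure theory of the product of two fixed-point-free involutions. Let $a,b$ be involutions on a set $\Omega$ and consider the graph on $\Omega$ whose edges are the transpositions of $a$ together with those of $b$. In (a) both $a$ and $b$ are fixed-point-free. Then every vertex has degree two, counting a double edge when $a$ and $b$ share a transposition. So the graph is a disjoint union of even alternating cycles, of lengths $2\ell_1,2\ell_2,\dots$.

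On an alternating cycle $v_0,v_1,\dots,v_{2\ell-1}$, with $v_{2i}^a=v_{2i+1}$ and $v_{2i+1}^b=v_{2i+2}$, the product $ab$ acts as follows. It sends $v_{2i}\mapsto v_{2i+2}$ and $v_{2i+1}\mapsto v_{2i-1}$. So $ab$ restricted to this component consists of exactly two $\ell$-cycles. This gives necessity in (a): cycles of $g=ab$ come in pairs of equal length.

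For sufficiency in (a), suppose each cycle length occurs an even number of times. Pair the $\ell$-cycles of $g$ up. For each pair $(u_0,\dots,u_{\ell-1})$ and $(w_0,\dots,w_{\ell-1})$, reverse the above construction. Set $v_{2i}=u_i$ and $v_{2i+1}=w_{-i}$, with indices taken mod $\ell$. Define $a$ by $u_i\leftrightarrow w_{-i}$ and $b$ by $w_{-i}\leftrightarrow u_{i+1}$. A direct check shows $ab$ maps $u_i\mapsto u_{i+1}$ and $w_{-i}\mapsto w_{-i+1}$, as required. For $\ell=1$ we get $a=b$ on two fixed points, which is fine. Gluing these local pairs over disjoint supports gives $a,b\in S$ with $ab=g$.

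For (b), write $a,b\in U$ with fixed points $p$ and $q$. If $p=q$, delete $p$ and apply (a) to the remaining $2k$ points. Then $g$ has a fixed point together with all other cycles paired. The exceptional odd value is $\ell=1$, where the number of fixed points is odd.

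If $p\neq q$, the graph has exactly two vertices of degree one, namely $p$ and $q$. So it consists of alternating cycles plus one alternating path from $q$ to $p$. This path has an odd number of vertices $2m+1$, since its first edge is an $a$-edge at $q$ and its last edge is a $b$-edge at $p$. A computation analogous to the cycle case shows that on this path $ab$ is a single $(2m+1)$-cycle: it moves along even positions in one direction and odd positions back. So $g$ has paired cycles plus one extra odd cycle, giving necessity.

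Sufficiency in (b) also needs a local construction. Given a single odd cycle $(x_0,\dots,x_{2m})$, write it as a product of two involutions, each fixing exactly one point. We use the standard reflection factorization of a cycle as a product of two dihedral reflections. Take $a: x_i\leftrightarrow x_{-i}$, which fixes only $x_0$, and $b: x_i\leftrightarrow x_{1-i}$, which fixes only $x_{m+1}$ since $1-i\equiv i \pmod{2m+1}$ gives $i=m+1$. Then $ab$ sends $x_i\mapsto x_{-i}\mapsto x_{1+i}$, which is the given cycle. The remaining paired cycles are handled with the construction from (a).

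One subtlety needs care in the counting. "Exactly one odd $\ell$ with an odd count" must be compatible with the total count $2k+1$. Since $n$ is odd, the number of cycles of odd length with odd multiplicity has to be odd. So the condition is consistent and the decomposition into one odd cycle plus pairs always exists.

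The main obstacle I expect is bookkeeping rather than ideas: verifying that the path component really yields a single cycle and not two. I would carry out that verification explicitly with the indexing above.
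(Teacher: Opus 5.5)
Your proposal is correct and follows essentially the same route as the paper: the graph formed by the transpositions of the two involutions splits into alternating cycles (each contributing a pair of equal-length cycles to the product), and in (b) with distinct fixed points there is additionally one alternating path with an odd number of vertices, which contributes a single odd cycle. You also make explicit what the paper leaves implicit, namely the index computations for the cycle and path components and the explicit factorization of an odd cycle as a product of two reflections for the converse of (b).
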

\begin{proof}
   \textbf{(a)} Take two elements $s_1,s_2$ of $S$. We may assign a matching for both permutations on the set $\{1,2,\ldots, 2k\}$. The edges corresponding to $s_1$ are coloured red and the others blue.  The graph obtained in this way is called $\Gamma$. Every point in this graph has degree 2 so it is the union of cycles. The length of each cycle is even since the colours along each cycle alternate.

The product $s_1s_2$ can be read from these alternating cycles. Suppose that one component has vertices $1,2,\ldots,2\ell$ in cyclic order, with $\{1,2\}$ red and $\{2,3\}$ blue. Then the restriction of $s_1s_2$ to this component is
\[
(1,3,\ldots,2\ell-1)(2\ell,2\ell-2, \ldots, 2).
\]
Thus every cycle length occurs an even number of times in $s_1s_2$.

Conversely, if every cycle length occurs an even number of times in $g$, pair the cycles of each length. For each pair of $\ell$-cycles, place their $2\ell$ points on one alternating red--blue cycle so that the construction above yields the prescribed pair. The union of the red edges and the union of the blue edges are perfect matchings, and their product is $g$.

\textbf{(b)}
Let us now start with a pair of elements $u_1,u_2$ of $U$.
We may build up a similar graph whose vertices are indexed by $[2k+1]$ and we draw a red edge in the graph for each transposition in $u_1$ and draw blue edges for the transpositions of $u_2$ and we draw coloured loops for both fixed points. Then the product $u_1u_2$ again correponds to paths of lengts 2 whose first edge is red and the second one is blue.

If the two fixed points coincide then our statement follows from (\ref{item:S negyzete}).
Otherwise, we may assume $1^{g_1}=1$ and $1^{g_2} \ne 1$. Now every point in the graph has degree 2 except for those being attached to a loop. So a cycle staring at 1 must coincide with a path ending in the other fixed point. It is clear that this path will provide a cycle in $g_1g_2$.
Now if we delete the vertices of the path, we obtain a subgraph we have seen in (\ref{item:S negyzete}) so the restriction of the product to that subset has all cycles with even multiplicity. Thus the path we first deleted has to contain odd vertices, which proves one direction of our second statement.

The reverse direction, the reconstruction of $g_1$ and $g_2$ is again clear from the argument above.
\end{proof}
%Similarly, one can show that the elements of $\left( \mathcal{C}_{(2^{2a+1},1^1)} \right)^2$
 % consists of the permutations in which every cycle length appears an even number of times, except for exactly one odd number.
\begin{proof}[Proof of Proposition~\ref{prop:NSGSCP}]
Assume that $Sg\subseteq S^2$. Since $k$ is odd, every element of $S$ is odd, whereas every element of $S^2$ is even. Hence $g$ must be odd. We first show that $g$ has no cycle of length at least $5$. We construct two elements of $Sg$ whose numbers of fixed points differ by exactly one; consequently, one of them has an odd number of fixed points and cannot belong to $S^2$. Let us now fix a cycle $(1,2,\ldots,\ell) $ of length $\ell\ge 5$. View elements of $S$ as perfect matchings on $[n]:=\{1,2,\ldots, n\}$. Fix the matching outside the displayed cycle, leaving one outside point $x$ available when $\ell$ is odd.
We extend this partial matching in two ways. A matching edge joining two distinct cycles of $g$ cannot contribute a fixed point to the product.

Let us assume first that
$\ell=2m+2$, $m \ge 2$.
\begin{itemize}
    \item If $s\vert_{[\ell]}=(1,2)(3,3+m)(4,4+m) \ldots (\ell-m,\ell)$, then $sg$ has one fixed point on $[\ell]$.
    \item If $s\vert_{[\ell]}=(1,m+2)(2,m+3)\ldots(m+1,\ell)$, then $sg$ has no fixed point on $[\ell]$.
\end{itemize}
Let us assume now that
$\ell=2m+3$, $m \ge 1$.
\begin{itemize}
\item $\ell=5$: If $s\vert_{[5]}=(1,2)(3,5)(4,x)$, where $x\in [n]\setminus [5]$, then $sg$ has one fixed point on $[5]$.
    \item $\ell=2m+3$, $m\ge 2$:
If $s\vert_{[\ell]}=(1,2)(3,x)(4,4+m)(5,5+m)\ldots (\ell-m,\ell)$, where $x\in [n]\setminus [\ell]$, then $sg$ has one fixed point on $[\ell]$.
\item If $s\vert_{[\ell]}=(1,m+2)(2,m+3) \ldots (\ell-m-2,\ell-1)(\ell,x)$, where $x\in [n]\setminus [\ell]$, then $sg$ has no fixed point on $[\ell]$.
\end{itemize}
It follows from these calculations that we may assume that the longest cycle of $g$ has length at most $4$.

Now we exclude the presence of two 3-cycles in the cycle structure of $g$.
Suppose, for a contradiction, that $g=(1,2,3)(4,5,6) g_1$ with $\supp(g_1)\cap[6]=\emptyset$.
\begin{itemize}
    \item If $s\vert_{[6]}=(1,2)(3,4)(5,6)$, then $gs\vert_{[6]}=(1)(5)(2,4,6,3)$
    \item while if $s\vert_{[6]}=(1,4)(2,5)(3,6)$, then $gs\vert_{[6]}=(1,5,3,4,2,6)$.
\end{itemize}
Keeping the matching fixed on the remaining points, the two products differ by one in the parity of the number of $4$-cycles and also by one in the parity of the number of $6$-cycles. Hence at least one of them does not belong to $S^2$. Thus $g$ has at most one $3$-cycle. \newline
\textbf{4-cycles:}
Let us assume now that $g$ has two 4-cycles: $(1,2,3,4)(5,6,7,8)$.
\begin{itemize}
\item $(1,2,3,4)(5,6,7,8)(1,2)(3,4)(5,6)(7,8)$ has four fixed points and
\item $(1,2,3,4)(5,6,7,8)(1,5)(2,4)(3,6)(7,8)$ has one fixed point.
\end{itemize}
Let us assume now that $g$ has a 4-cycle and a 3-cycle: $(1,2,3,4)(5,6,7)$.
\begin{itemize}
\item $(1,2,3,4)(5,6,7)(1,5)(2,6)(3,7)(4,x)$ has no fixed points and
\item $(1,2,3,4)(5,6,7)(1,2)(3,5)(4,6)(7,x)$ has one fixed point.
\end{itemize}
These calculations show that if $g$ has a $4$-cycle, then every other cycle of $g$ has length at most $2$. Because $g$ is odd, the number of transpositions in its cycle structure is even. Since $n\equiv2\pmod4$, it follows that $g$ has at least two fixed points.
This case is excluded by the following two products:
\begin{itemize}
\item $(1) (2) (3,4,5,6) (1,2)(3,5)(4,6)$ has no fixed points and
\item $(1) (2) (3,4,5,6) (1,5)(2,6)(3,4)$ has one fixed point.
\end{itemize}
\textbf{3-cycles:} It remains to handle the case when the longest cycle in $g$ is a 3-cycle. There is at most one such cycle. Since $g$ is odd and $n$ is even, the remaining cycle structure contains both a transposition and a fixed point; otherwise $g$ would be even.
Let $g\vert_{[6]}=(1,2,3)(4,5)(6) $.
\begin{itemize}
\item $(1,2,3)(4,5)(6) \cdot (1,4)(2,5)(3,6)$ has no fixed points and
\item $(1,2,3)(4,5)(6) \cdot (1,2)(3,4)(5,6)$ has one fixed point.
\end{itemize}
Finally, suppose that $g$ is an involution. If $g\notin S$, then it has fixed points. Since $g$ is odd and $n=2k$ with $k$ odd, it must have at least four fixed points. On six suitable points, write $g=(1,2)(3)(4)(5)(6)$. Then
\[
(1,2)(3,4)(5,6)g=(1)(2)(3,4)(5,6),
\]
whereas
\[
(1,3)(2,4)(5,6)g=(1,3,2,4)(5,6).
\]
The two products differ by one in the parity of the number of $4$-cycles. After fixing the matching on the remaining points, at least one of them therefore lies outside $S^2$. This contradiction proves that $g\in S$.
\end{proof}

We conclude with the following natural general problem.

\begin{problem}\label{prob:finite-groups}
Given a finite group $G$, determine all nonidentity conjugacy classes of $G$ that tile $G$.
\end{problem}

The groups considered in this paper suggest that almost simple groups form a natural family for which one might hope to obtain a complete, or nearly complete, solution. For $G=\operatorname{PSL}(2,q)$, the results of Garion \cite{garion} and Terada \cite{terada}, together with Lemma~\ref{Lem:NecessaryCondition}, appear to give such a solution. We leave the details for future work.

\section*{Acknowledgement}
 This work was supported by a Discovery Project grant (DP250104965) of the Australian Research Council.


\begin{thebibliography}{00}
\bibliographystyle{plain}

\bibitem{cameron}
Cameron, P. J., Yap, R. S., \& Zhou, S. (2026). Perfect codes in Cayley graphs of abelian groups. \textit{Designs, Codes and Cryptography, 94}(4), Article 87.

\bibitem{FangLi}
Fang, T., \& Li, J. (2026). Proof of a conjecture of Green and Liebeck on codes in symmetric groups. Discrete Mathematics, 349(5), 114999.

\bibitem{transpositions}
Fang, T., \& Xia, B. (2026). Tiling the symmetric group by transpositions. \textit{Bulletin of the London Mathematical Society, 58}(5), e70366.

\bibitem{garion}
Garion, S. (2015). Expansion of conjugacy classes in $\operatorname{PSL}_2(q)$. \textit{Journal of Group Theory, 18}(6), 961--980.

\bibitem{perfectcodes}
Huang, H., Xia, B., \& Zhou, S. (2018). Perfect codes in Cayley graphs. \textit{SIAM Journal on Discrete Mathematics, 32}(1), 548--559.

\bibitem{Gleason}
Husemoller, D. H. (1962). Ramified coverings of Riemann surfaces. \textit{Duke Mathematical Journal, 29}, 167--174.

\bibitem{GreenLiebeck}
Green, H. M., \& Liebeck, M. W. (2020). Some codes in symmetric and linear groups. Discrete Mathematics, 343(8), 111719.


\bibitem{James}
James, G. D. (1978). \textit{The representation theory of the symmetric groups} (Lecture Notes in Mathematics, Vol.~682). Springer.

\bibitem{Kourovka}
E. I. Khukhro and V. D. Mazurov (eds.),
\emph{The Kourovka Notebook: Unsolved Problems in Group Theory},
20th ed., Institute of Mathematics, Siberian Branch of the Russian Academy of Sciences,
Novosibirsk, 2022, Problem 9.24, p.~33.

\bibitem{kol}
Kolountzakis, M. N. (2004). The study of translational tiling with Fourier analysis. In L. Brandolini, L. Colzani, A. Iosevich, \& G. Travaglini (Eds.), \textit{Fourier analysis and convexity} (pp.~131--187). Birkh"auser.

\bibitem{labalondner}
\L aba, I., \& Londner, I. (2025). Splitting for integer tilings. \textit{International Mathematics Research Notices, 2025}(8), rnaf090.

\bibitem{LTuniform}
Larsen, M., \& Tiep, P. H. (2024).
Uniform character bounds for finite classical groups.
\emph{Annals of Mathematics, 200}(1), 1--70.

\bibitem{LTasymptotic}
Larsen, M., \& Tiep, P. H. (2025).
Character estimates for finite classical groups and the asymptotic
Thompson Conjecture.
\emph{arXiv preprint arXiv:2403.09047v2}.


\bibitem{LT2023}
Larsen, M. J., \& Tiep, P. H. (2023). Squares of conjugacy classes in alternating groups. arXiv preprint arXiv:2305.04806.
\bibitem{LWZ2026}
Liao, J., Wang, L., \& Zhang, J. (2026).
Thompson's conjecture for finite simple groups of Lie type over
small fields.
\emph{arXiv preprint arXiv:2608.17891}.
.

\bibitem{subgroupperfect}
Li, B., Li, J., Meng, W., \& Yu, H. (2026). Characterizing finite groups via subgroup perfect codes. \textit{arXiv preprint arXiv:2605.03284}.

\bibitem{levymaroti}
Levy, D., \& Mar\'{o}ti, A. (2018). Set-direct factorizations of groups. \textit{Journal of Algebra, 516}, 414--436.

\bibitem{rothausthompson}
Rothaus, O. S., \& Thompson, J. G. (1966). A combinatorial problem in the symmetric group. \textit{Pacific Journal of Mathematics, 18}(1), 175--178.

\bibitem{reprtheoryofsymmetric}
Sagan, B. E. (2001). \textit{The symmetric group: Representations, combinatorial algorithms, and symmetric functions} (2nd ed., Graduate Texts in Mathematics, Vol.~203). Springer.

\bibitem{sands}
Sands, A. D. (1957). On the factorisation of finite abelian groups. \textit{Acta Mathematica Academiae Scientiarum Hungaricae, 8}(1--2), 65--86.

\bibitem{terada}
Terada, S. (2004). Perfect codes in $\operatorname{SL}(2,2^f)$. \textit{European Journal of Combinatorics, 25}(7), 1077--1085.

\bibitem{totalperfectcodes}
Zhou, S. (2016). Total perfect codes in Cayley graphs. \textit{Designs, Codes and Cryptography, 81}(3), 489--504.

\end{thebibliography}
\end{document}